\documentclass[11pt]{amsart}

\usepackage[mathscr]{euscript}


\usepackage{mathrsfs}

%

\usepackage{amsmath}
\usepackage[margin=1in]{geometry}
\usepackage{enumerate}

\usepackage{hyperref}

\allowdisplaybreaks


\newtheorem{prop}{Proposition}

\newtheorem{theo}[prop]{Theorem}
\newtheorem{lemma}[prop]{Lemma}
\newtheorem{lemm}[prop]{Lemma}
\newtheorem{coro}[prop]{Corollary}

\newtheorem{claim}[prop]{Claim}

\theoremstyle{definition}
\newtheorem{defi}[prop]{Definition}

\newtheorem{remark}[prop]{Remark}

\newcommand{\CC}{\mathbb{C}}

\newcommand{\NN}{\mathbb{N}}

\newcommand{\RR}{\mathbb{R}}
\renewcommand{\SS}{\mathbb{S}}



\newcommand{\cB}{\mathcal B}

\newcommand{\cL}{\mathcal L}

\newcommand{\cS}{\mathcal S}

\newcommand{\cY}{\mathcal Y}

\newcommand{\sL}{\mathscr{L}}



\DeclareMathOperator{\Span}{span}

\DeclareMathOperator{\Vol}{Vol}
\DeclareMathOperator{\range}{range}

\setlength{\marginparwidth}{.5in}
\let\oldmarginpar\marginpar
\renewcommand\marginpar[1]{\-\oldmarginpar[\raggedleft\footnotesize #1]%
{\raggedright\footnotesize #1}}

\DeclareMathOperator{\proj}{proj}
\DeclareMathOperator{\Euc}{Euc}
\DeclareMathOperator{\vol}{vol}

\usepackage{graphicx}

\newcommand{\T}{\mathbin{\raisebox{6 pt}{\scalebox{-.8}{{$\perp$}}}}}

\allowdisplaybreaks

\title{Slowly converging Yamabe flows}

\thanks{This material is based upon work supported in part by NSF grants
DMS-0802923, DMS-1105323, DMS-1206284, and DGE-1147470. OC additionally acknowledges the support of NSF grants DMS 1107452, 1107263, and 1107367 ``RNMS: Geometric structures and Representation varieties" (the GEAR Network). YAR was also supported by a Sloan Research Fellowship. The authors
thank the referee for comments improving the exposition,
R. Bettiol and J.-P. Bourguignon for valuable discussions, R. Mazzeo and F. Weissler
for informing us of \cite{BidautVeronBouhar,CWeissler}, and S. Brendle, 
A. Malchiodi, R. Mazzeo, and R. Schoen for their interest and encouragement. }

\author[A. Carlotto]{Alessandro Carlotto}
\address{Department of Mathematics, Stanford University, Stanford, CA 94305, USA}
\email{carlotto@math.stanford.edu}
\author[O. Chodosh]{Otis Chodosh}
\email{ochodosh@math.stanford.edu}

\author[Y.A. Rubinstein]{Yanir A. Rubinstein}
\address{Department of Mathematics, University of Maryland, College Park, MD 20742, USA}
\email{yanir@umd.edu}

\date{\today}

\begin{document}

\begin{abstract}
We characterize the rate of convergence of a converging volume-normalized Yamabe flow in terms of Morse theoretic properties of the limiting metric. If the limiting metric is an integrable critical point for the Yamabe functional (for example, this holds when the critical point is non-degenerate), then we show that the flow converges exponentially fast. In general, we make use of a 
suitable \L ojasiewicz--Simon inequality to prove that the slowest the flow will converge is polynomially. When the limit metric satisfies an Adams--Simon type condition we prove that there exist flows converging to it exactly at a polynomial rate. We conclude by constructing explicit examples to show that this does occur. These seem to be the first examples of a slowly converging solution to a geometric flow.
\end{abstract}

\maketitle

\section{Introduction}
\label{IntroSec}

Let $M^{n}$ be an arbitrary smooth closed manifold of dimension $n\geq 3$ and set $N=\frac{2n}{n-2}$. In this article we study the quantitative rate of convergence of the volume-normalized Yamabe flow
\begin{equation*}
\frac{\partial g}{\partial t} = -(R_{g} - r_{g})g,
\end{equation*}
for complete Riemannian metrics $g(t)$ on $M$.
Here $R_{g}$ is the scalar curvature and $r_g$ is its average. This is a flow on a volume normalized conformal class on $M$. It arises as the gradient flow of the Einstein--Hilbert functional and thus is a fundamental tool in the study of scalar curvature deformations, mostly in connection with the celebrated Yamabe problem. Motivated by the well-known uniformization theorem, the problem asks whether for any given Riemannian manifold $(M_{0},g_{0})$ one can find a positive function $w$ such that the conformal metric $w^{N-2}g_{0}$ has {constant} scalar curvature. An affirmative answer to this question was obtained by the combined efforts of Yamabe \cite{Yamabe}, Trudinger \cite{Trudinger:confDefRiem}, Aubin \cite{Aubin:YP}, and Schoen \cite{Schoen:YP}; we refer the reader to the survey article \cite{LeeParker}. 

In unpublished work, Hamilton introduced the Yamabe flow as a possible alternative method for solving the Yamabe problem and showed that the flow existed for all time. However, the problem of {convergence} turns out to be highly non-trivial.  For conformally flat metrics with positive Ricci curvature, Chow showed that the flow converged as $t\to\infty$ to a metric of constant scalar curvature \cite{Chow:YFlcf}. Ye removed the Ricci curvature condition \cite{Ye:YF} and, subsequently, Schwetlick and Struwe showed that the flow converged in dimensions $3\leq n \leq 5$ under the assumption that the starting Yamabe energy was ``not too large'' \cite{SS03}. The energy assumption was then removed by Brendle to establish unconditional convergence of the flow in dimensions $3\leq n\leq 5$ in \cite{Brendle:YF3to5}, and convergence in dimensions $n\geq 6$ under a technical hypothesis on the conformal class \cite{Brendle:HighDim}. 
We refer to \cite{Brendle:evoEqns} for a survey concerning these and related results.

Our work complements these contributions by showing that based on certain Morse-theoretic properties of the limit metric, the rate of convergence has either exponential or polynomial upper bounds, and in the latter case, the polynomial rate of convergence cannot in general be improved since it does in fact occur: therefore, it gives an essentially complete description of the rate of convergence for this flow. Perhaps the most novel outcome of this work is the result that there exist slowly converging geometric flows.

\subsection{Main Results} We now list our main results (we will define the precise terminology below), starting with the following statement concerning general upper bounds on the rate of convergence of the Yamabe flow. Integrability is defined in Definition \ref{IntegDef}.

\begin{theo}\label{theo:gen-rate-conv}
Assume that $g(t)$ is a Yamabe flow that is converging in $C^{2,\alpha}(M,g_{\infty})$ to $g_{\infty}$ as $t\to\infty$ for some $\alpha\in (0,1)$. Then, there is $\delta>0$ depending only on $g_{\infty}$ so that
\begin{enumerate}
\item If $g_{\infty}$ is an integrable critical point, then the convergence occurs at an exponential rate
\begin{equation*}
\Vert g(t) -  g_{\infty} \Vert_{C^{2,\alpha}(M,g_{\infty})} \leq C e^{-\delta t},
\end{equation*}
for some constant $C > 0$ depending on $g(0)$.
\item In general, the convergence cannot be worse than a polynomial rate
\begin{equation*}
 \Vert g(t) -  g_{\infty} \Vert_{C^{2,\alpha}(M,g_{\infty})} \leq C (1+t)^{-\delta},
\end{equation*}
for some constant $C > 0$ depending on $g(0)$.
\end{enumerate}
\end{theo}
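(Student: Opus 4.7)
My plan is to realise the volume-normalized Yamabe flow as a gradient flow and then apply the classical \L{}ojasiewicz--Simon scheme, which is the standard template for sharp rate-of-convergence results of this type. Writing $g(t) = u(t)^{N-2} g_\infty$, the flow becomes a quasilinear parabolic equation for the conformal factor $u$, and coincides with the downward gradient flow of the Yamabe functional
$$\cY(g) := \Vol(g)^{-(n-2)/n}\int_M R_g\, dV_g$$
restricted to the unit-volume slice of the conformal class $[g_\infty]$, with respect to the natural $L^2(g)$ inner product on that slice. In particular we have the energy identity $\tfrac{d}{dt}\cY(g(t)) = -\|\dot g(t)\|_{L^2(g(t))}^2$. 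Since $g(t) \to g_\infty$ in $C^{2,\alpha}$, all $L^2$-norms taken along the flow are equivalent to the corresponding $L^2(g_\infty)$-norms up to a fixed multiplicative constant, so I may work throughout in a small $C^{2,\alpha}$-neighbourhood of $g_\infty$.

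The crucial analytic input is a \L{}ojasiewicz--Simon inequality: for some $\theta \in (0,1/2]$ and every $g$ in a sufficiently small $C^{2,\alpha}$-neighbourhood of $g_\infty$,
$$|\cY(g) - \cY(g_\infty)|^{1-\theta} \leq C\,\|\nabla \cY(g)\|_{L^2(g_\infty)}.$$
I would derive this by verifying the abstract \L{}ojasiewicz--Simon hypotheses: $\cY$ is real-analytic in the conformal factor, and its Euler--Lagrange operator is an elliptic Fredholm operator on the slice (the linearisation at $g_\infty$ is a zeroth-order perturbation of the conformal Laplacian). A Lyapunov--Schmidt reduction then collapses $\cY$ to an analytic function on the finite-dimensional kernel of the linearisation, and \L{}ojasiewicz's gradient inequality for analytic functions of finitely many variables gives the desired bound with some $\theta \in (0,1/2]$. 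Integrability of $g_\infty$ is precisely the condition that the kernel exponentiates to a smooth finite-dimensional manifold of genuine critical points; in that case the reduced function is \emph{constant}, only the quadratic transversal behaviour survives, and one obtains the sharp exponent $\theta = 1/2$.

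Given the inequality, both parts of the theorem follow from the standard ODE argument. Set $\cE(t) := \cY(g(t)) - \cY(g_\infty) \geq 0$; the energy identity and \L{}ojasiewicz--Simon combine to give
$$-\dot\cE(t) \;=\; \|\dot g(t)\|_{L^2}^2 \;\geq\; C^{-2}\,\cE(t)^{2(1-\theta)}.$$
For $\theta = 1/2$ this is linear and yields $\cE(t) \leq C e^{-\delta t}$; for $\theta \in (0,1/2)$ direct integration of $\cE^{2\theta-2}\dot\cE \leq -c$ gives $\cE(t) \leq C(1+t)^{-1/(1-2\theta)}$. The classical finite-length argument then converts energy decay into metric decay: differentiating $\cE^\theta$ and using the \L{}ojasiewicz--Simon inequality once more yields $\tfrac{d}{dt}\cE^\theta \leq -c\,\|\dot g\|_{L^2}$, so that $\int_t^\infty \|\dot g(s)\|_{L^2}\,ds \leq C\,\cE(t)^\theta$, hence $\|g(t) - g_\infty\|_{L^2(g_\infty)}$ decays exponentially (respectively polynomially with exponent $\theta/(1-2\theta)$). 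To upgrade this to $C^{2,\alpha}$-decay at the same rate (perhaps with a slightly smaller $\delta$), I would interpolate between the assumed uniform $C^{2,\alpha}$ bound and the $L^2$ decay, or apply parabolic Schauder estimates to the linearised flow, exploiting that $g(t)$ remains in a fixed $C^{2,\alpha}$-neighbourhood of $g_\infty$.

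The main obstacle is the \L{}ojasiewicz--Simon inequality of the second paragraph. Analyticity of $\cY$ is routine, but the Lyapunov--Schmidt reduction requires a careful Fredholm analysis of the linearised Yamabe equation together with a clean treatment of the conformal gauge, so that the Banach-space setup matches the hypotheses of the abstract theorem. Pinning down which algebraic/geometric condition on $g_\infty$ forces the exponent to jump to the sharp value $\theta=1/2$ — namely, integrability in the sense of Definition \ref{IntegDef} — is the main technical content; once that is in place the rate-of-convergence statements are essentially formal consequences of the gradient-flow structure.
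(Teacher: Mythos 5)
Your proposal follows essentially the same route as the paper's proof: establish a \L ojasiewicz--Simon inequality (via analyticity, Lyapunov--Schmidt reduction to a finite-dimensional kernel, and the abstract theorem of Chill), observing that integrability forces the reduced functional to be constant and hence $\theta=\frac12$; then combine the energy identity with the \L ojasiewicz--Simon bound into a differential inequality for $\cE(t)=\cY(g(t))-\cY(g_\infty)$, apply the finite-length argument $\tfrac{d}{dt}\cE^\theta\le -c\Vert \dot g\Vert_{L^2}$ to convert energy decay into $L^2$-decay of $u(t)-1$, and finally interpolate between the $L^2$ decay and the uniform $C^{2,\alpha}$ bound to upgrade the rate. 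This is precisely what the paper does (Proposition~\ref{prop:LS-ineq} and the proof of Theorem~\ref{theo:gen-rate-conv}), so your plan is correct and not a genuinely different approach.
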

The question of the rate of convergence of the flow was raised by Ye \cite[p.\ 36]{Ye:YF}. 
In general, the polynomial rate of convergence cannot be improved, as we discuss below.

Two previous results on this question are worth mentioning. 
First, Struwe's method of showing that the Yamabe flow on the $2$-sphere (which agrees with the Ricci flow in this case) converges exponentially fast  \cite{Struwe:curvFlows} can in fact be extended to prove that a Yamabe flow converging to the standard round metric on the sphere (in all dimensions) converges exponentially fast (this also follows from the work of Brendle \cite{Brendle:YFSn}).
We remark that this is a special case of case (1) of Theorem \ref{theo:gen-rate-conv} as the round metric is integrable by Obata's Theorem. Second, the convergence statement in case (2) of Theorem \ref{theo:gen-rate-conv} can in some sense be regarded as an implicit corollary of the arguments of \cite{Brendle:YF3to5} (because we are assuming the metric converges, there cannot be any bubbling phenomena; thus, one may use the remaining arguments in \cite{Brendle:YF3to5}, that may be verified to apply in any dimension, to conclude). The proof we give for Theorem \ref{theo:gen-rate-conv} is self-contained and applies, in a unified framework, to both settings. 
Moreover, the method we use directly applies to other gradient flows (e.g., the Calabi flow)
although we do not go into the details of such 
applications in this article.

The integrability condition is a nearly sharp condition for exponential convergence as is shown in the next theorem. 
The Adams--Simon positivity condition is defined in Definition  \ref{defi:ASp}.

\begin{theo}\label{theo:gen-slow-conv}
Assume that $g_{\infty}$ is a non-integrable critical point of the Yamabe energy with order of integrability $p\geq 3$. If $g_{\infty}$ satisfies the Adams--Simon positivity condition $AS_{p}$, then, there exists a metric $g(0)$
conformal to $g_{\infty}$ so that the Yamabe flow $g(t)$ starting from $g(0)$ exists for all time and converges in $C^{\infty}(M,g_{\infty})$ to $g_{\infty}$ as $t\to\infty$. The convergence occurs ``slowly'' in the sense that
\begin{equation*}
C^{-1}(1+t)^{- \frac{1}{p-2}} \leq \Vert g(t) -  g_{\infty} \Vert_{C^{2,\alpha}(M,g_{\infty})} \leq C(1+ t)^{- \frac{1}{p-2}},
\end{equation*}
for some constant $C>0$. 
\end{theo}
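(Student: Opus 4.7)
My approach is to construct the slow solution by performing a Lyapunov--Schmidt type reduction near $g_\infty$ and then selecting initial data along a direction in the kernel of the linearised Yamabe operator for which the Adams--Simon positivity condition yields the leading-order polynomial dynamics. The upper bound half of the asserted inequality should follow a posteriori from Theorem \ref{theo:gen-rate-conv}(2) combined with the sharp rate coming out of the center-manifold reduction, so the bulk of the work lies in producing a flow realising the lower bound.

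First I would set up the flow in conformal coordinates: writing nearby metrics as $g = u^{N-2} g_\infty$ for a positive function $u$ close to $1$, the volume-normalised Yamabe flow becomes a quasilinear parabolic equation $u_t = \mathcal{F}(u)$ with the structure of a gradient flow for the Yamabe functional $\mathcal{Y}$ with respect to a natural $L^{2}$ pairing. The linearisation at $u\equiv 1$ is a self-adjoint elliptic Fredholm operator $-L$, and by the integrability assumption of order $p\ge 3$ its finite-dimensional kernel $K$ coincides with the tangent space to the germ of integrable deformations. Decomposing $u-1 = \phi + \psi$ with $\phi\in K$ and $\psi\in K^{\perp}$ produces a coupled system in which $\phi$ evolves on a finite-dimensional center subspace while $\psi$ is linearly exponentially damped with nonlinear forcing from $\phi$.

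The central step is a center-manifold reduction: I would construct a smooth, locally invariant graph $\psi = \Psi(\phi)$ with $\Psi(0)=0$ and $D\Psi(0)=0$ attracting nearby orbits exponentially, so that the reduced dynamics on $K$ take the form
\[
\dot\phi = -\nabla f(\phi) + O(|\phi|^{p}),
\]
where $f(\phi) := \mathcal{Y}(1+\phi+\Psi(\phi)) - \mathcal{Y}(g_\infty)$. Integrability of order $p$ forces $f$ to vanish to order $p$ at the origin, and the $AS_{p}$ condition furnishes $\phi_{0}\in K$ along which the homogeneous degree-$p$ part of $f$ satisfies $\langle \nabla f(s\phi_{0}),\phi_{0}\rangle / s^{p-1} \to c > 0$ as $s\to 0^{+}$. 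Taking initial data $u(0) = 1+\epsilon\phi_{0} + \Psi(\epsilon\phi_{0})$ with $\epsilon>0$ small and following $a(t) := \langle \phi(t),\phi_{0}\rangle$ leads to the scalar ODE
\[
\dot a = -c\, a^{p-1} + O(a^{p}),
\]
which integrates to $a(t) \sim ((p-2)ct)^{-1/(p-2)}$; combined with the invariant-manifold estimate $\|\psi(t)\| = O(|\phi(t)|^{p-1})$ and the linear coercivity of $L$ on the complementary modes of $K$, this produces the two-sided polynomial bound in $C^{2,\alpha}$, while parabolic bootstrapping along $g_\infty$ upgrades the convergence to $C^{\infty}$.

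The principal obstacle is the construction of $\Psi$ together with the verification that the reduction preserves the Adams--Simon structure of the $p$-jet of $\mathcal{Y}$: since $\Psi$ is only of order $|\phi|^{2}$, its feedback into the reduced potential must be tracked carefully to confirm that the corrections along $\phi_{0}$ are genuinely of order strictly higher than $p$, for otherwise they could compete with or reverse the sign of the leading term and destroy the polynomial decay. A further technical point is that the Yamabe flow is quasilinear rather than semilinear, so classical center-manifold theorems do not apply verbatim; one must either work in parabolic Hölder spaces tailored to the linearisation and establish the required smoothing estimates by hand, or perform the reduction on a discretised time-one map and transfer the result back to the continuous flow.
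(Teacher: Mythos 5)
Your proposal follows the same broad philosophy as the paper's argument (finite-dimensional reduction onto $\Lambda_0$, an ansatz whose leading order dynamics follow from $AS_p$, and a perturbation argument in parabolic H\"older spaces), but there is a genuine gap at the heart of the scalar reduction that the paper's argument is specifically engineered to circumvent.

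The issue is that you pass directly from the reduced dynamics on $K=\Lambda_0$ to a \emph{scalar} ODE by projecting onto $\phi_0$, and then solve it forward as an initial value problem from $\phi(0)=\epsilon\phi_0$. This is only tenable if $\dim\Lambda_0=1$. In general, the $AS_p$ condition produces $\hat v=\phi_0$ as a point where $F_p$ attains a positive \emph{maximum} on the unit sphere in $\Lambda_0$, which makes the ray $\RR_{+}\phi_0$ an invariant but \emph{unstable} set for the reduced gradient dynamics $\dot\phi=-\nabla F_p(\phi)+\cdots$: the tangential Hessian of $F_p$ at $\hat v$ is non-positive, so generic perturbations (including the $O(|\phi|^p)$ error, whose components need not be aligned with $\phi_0$) cause the trajectory to drift away from the ray, destroying the ansatz. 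The same instability phenomenon occurs in the complement $\Lambda_0^\perp$, which contains a finite-dimensional unstable subspace $\Lambda_\uparrow$ (eigenfunctions of $\Delta_{g_\infty}$ with eigenvalue below $R_{g_\infty}/(n-1)$), so the center manifold is not exponentially attracting from all sides either and one really has a center--unstable manifold problem.

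The paper resolves this by treating the problem not as an IVP but as a two-sided boundary value problem. In Lemma~\ref{lemm:ker-proj-ODE} the modes of $D^2F_p(\hat v)$ with eigenvalue above the threshold $2(N-2)\gamma$ (which always includes the radial direction) are solved \emph{backward} from $t=\infty$ subject to a decay condition, while the slow modes are solved forward with vanishing initial data; similarly in Lemma~\ref{lemm:ker-orthog-prob} the $\Lambda_\uparrow$ modes are integrated from $\infty$ and the $\Lambda_\downarrow$ modes from $0$. The contraction mapping in Proposition~\ref{prop:contract-map} is then performed over the weighted parabolic H\"older space $X$, and only after the fixed point is found does the actual initial datum $u(0)$ emerge; one cannot simply prescribe $u(0)=1+\epsilon\phi_0+\Psi(\epsilon\phi_0)$ and expect the solution to stay near the ray. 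Without this shooting structure, the scalar ODE $\dot a=-ca^{p-1}+O(a^p)$ does not capture the dynamics, and the claimed lower bound (and even convergence to $g_\infty$ at all) would fail for generic $\dim\Lambda_0>1$, which is precisely the situation in the $\CC P^n$-product example of Proposition~\ref{prop:CPn-example}. A secondary (smaller) gap is that your dynamical center-manifold graph $\Psi$ is not the same object as the Lyapunov--Schmidt graph, so the identification of the reduced potential $f$ with $\cY\circ\Psi$ and the preservation of its $p$-jet requires its own argument; the paper avoids this entirely by parametrizing the unknown as $u=\Psi(\varphi+w^\T)+w^\perp$ with $\Psi$ the static Lyapunov--Schmidt map and absorbing the discrepancy into the error terms $E^\T,E^\perp$.
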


The bulk of this article is devoted to the proof of Theorem \ref{theo:gen-slow-conv}. Our proof is based on an adaptation of the remarkable tools developed by Simon and Adams--Simon in the study of isolated singularities of minimal surfaces and harmonic maps to the parabolic setting and, more specifically, to the Yamabe flow. As stated, three conditions need to be checked for a critical point $g_{\infty}$ to be a limit point of a slowly converging Yamabe flow. Integrability and degeneracy are defined in Definition \ref{IntegDef} and Lemma \ref{lemm:int-imp-F-zero}. The degeneracy can be studied by looking at the spectrum of the Laplace operator $\Delta_{g_{\infty}}$. For a degenerate metric, determining integrability (or lack thereof) depends on understanding the set of constant scalar curvature metrics near $g_\infty$. For instance, if $g_{\infty}$ is {isolated} and degenerate it must be non-integrable. Lastly, the Adams--Simon condition $AS_{p}$ (Definition \ref{defi:ASp}) concerns the first non-trivial term in the analytic expansion of the Lyapunov--Schmidt reduction $F$ of the Yamabe functional at $g_{\infty}$, whose order we denote by $p$. 

We give two criteria, of different nature, to check the condition $AS_p$:
\begin{itemize}
\item{The condition $AS_{3}$ is satisfied whenever
\begin{equation*}
F_{3}(v) =-2(N-1)(N-2) R_{g_{\infty}} \int_{M} v^{3} \ dV_{g_{\infty}}
\end{equation*}
does not identically vanish on the nullspace $\Lambda_{0}$, which is the linear span of functions $v$ such that $(n-1)\Delta_{g_{\infty}}v+R_{g_{\infty}}v=0$. 
Of course, this can in principle be computed once $\Lambda_{0}$ is explicitly known.}
\item{When $p>3$, the condition $AS_{p}$ holds if $g_{\infty}$ is both degenerate and a strict local minimum of the Yamabe functional.}
\end{itemize}

The relevance of the second criterion is related to the solution of the Yamabe problem: if $(M,\left[g\right])$ is not the round sphere with the associated conformal structure, then the Yamabe functional, $\cY$, is {coercive} and thus has a global minimum $g_{\min}$. As such, the existence of polynomially converging flows is guaranteed by Theorem \ref{theo:gen-slow-conv} whenever $g_{\min}$ is isolated but degenerate (which is simply a condition on the spectrum of the Laplacian of $g_{\min}$).  

We give examples of applicability of these criteria in the following two propositions. 
\begin{prop}\label{prop:CPn-example}
Fix integers $n,m > 1$ and a closed $m$-dimensional Riemannian manifold $(M^{m},g_{M})$ with constant scalar curvature $R_{g_{M}} \equiv 4(n+1)(m+n-1)$. We denote the complex projective space equipped with the Fubini--Study metric by $(\CC P^{n},g_{FS})$, where the normalization of $g_{FS}$ is fixed so that $\SS^{2n+1}(1)\to(\CC P^{n},g_{FS})$ is a Riemannian submersion. Then, the product metric $(M^{m}\times \CC P^{n}, g_{M} \oplus g_{FS})$ is a degenerate critical point satisfying $AS_{3}$. 
\end{prop}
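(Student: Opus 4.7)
The strategy is to verify, in order, that $g := g_M \oplus g_{FS}$ is (a) a critical point of the Yamabe functional in its conformal class, (b) degenerate, and (c) satisfies $AS_3$ via the first of the two criteria listed in the introduction.

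For (a), recall that the scalar curvature of a Riemannian product is the sum of the factor scalar curvatures. With the normalization specified in the statement, $g_{FS}$ is K\"ahler--Einstein with $\Ric_{g_{FS}} = 2(n+1)g_{FS}$, hence $R_{g_{FS}} = 4n(n+1)$. Combining with the hypothesis $R_{g_M} = 4(n+1)(m+n-1)$ yields $R_g = 4(n+1)(m+2n-1)$, which is constant; so $g$ is a Yamabe critical point. For (b), the defining equation of $\Lambda_{0}$ on the product, of dimension $m+2n$, becomes $\Delta_g v = -4(n+1) v$ after using the formula for $R_g$ above. Since the spectrum of $\Delta_{g_{FS}}$ on $\CC P^n$ is $\{4k(k+n):k\geq 0\}$, the value $4(n+1)$ is exactly the first nontrivial eigenvalue; pulling back any such eigenfunction along the projection $M\times \CC P^n \to \CC P^n$ produces a nonzero element of $\Lambda_0$, so $g$ is degenerate.

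For (c), because $R_g>0$, the first stated criterion reduces $AS_3$ to the existence of some $v\in\Lambda_0$ with $\int_{M\times \CC P^n} v^3\, dV_g\neq 0$. Identify the first eigenspace of $\Delta_{g_{FS}}$ with the space of traceless Hermitian $(n+1)\times(n+1)$ matrices via $A\mapsto f_A([z]):=\langle Az,z\rangle/|z|^2$, and take $v := f_A\circ \pi_2$ pulled back from $\CC P^n$. By Fubini, $\int v^3\,dV_g = \Vol_{g_M}(M)\int_{\CC P^n} f_A^3\,dV_{g_{FS}}$. Using $U(n+1)$-equivariance we may diagonalize $A = \mathrm{diag}(a_0,\ldots,a_n)$ with $\sum_i a_i = 0$, and then recast the integral as a Dirichlet moment on the simplex (the pushforward of the round measure on $S^{2n+1}$ under $(z_0,\ldots,z_n)\mapsto (|z_0|^2,\ldots,|z_n|^2)$). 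A short manipulation using $\sum a_i = 0$ (in particular the identity $\sum_{i<j<k} a_ia_ja_k = \tfrac13\sum a_i^3$) collapses $\EE\big[(\sum a_i x_i)^3\big]$ to a positive multiple of $\sum a_i^3$. Choosing $A = \mathrm{diag}(n,-1,\ldots,-1)$ gives $\sum a_i^3 = n(n-1)(n+1)$, which is nonzero precisely because $n>1$, so $F_3(v)\neq 0$ and $AS_3$ holds.

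The only step requiring real work is (c); once the first eigenspace of $\Delta_{g_{FS}}$ is identified with traceless Hermitian matrices and the integral is converted to a Dirichlet moment, the condition $n>1$ (together with the assumption $m>1$, which guarantees that $\Vol_{g_M}(M)$ is nonzero) forces the cubic integral to be nonvanishing.
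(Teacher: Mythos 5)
Your argument is correct. Parts (a) and (b) match the paper: compute $R_g = 4(n+1)(m+2n-1)$, note the Yamabe kernel corresponds to the Laplace eigenvalue $R_g/(m+2n-1)=4(n+1)$, and recognize this as the first eigenvalue of $\CC P^n$ so that pullbacks of first eigenfunctions lie in $\Lambda_0$. Part (c) is where your route genuinely diverges. The paper invokes a specific observation of Kr\"oncke, namely that the harmonic polynomial $h(z,\bar z) = z_1\bar z_2 + z_2\bar z_1 + z_2\bar z_3 + z_3\bar z_2 + z_3\bar z_1 + z_1\bar z_3$ descends to a first eigenfunction $v$ on $\CC P^n$ (for $n\ge 2$) with $\int v^3 \neq 0$, and then applies Fubini exactly as you do. You instead give a self-contained calculation: identifying the first eigenspace with traceless Hermitian matrices, pushing forward to the simplex, and using the Dirichlet moment identity
$\int_{\CC P^n} f_A^3 \, dV_{g_{FS}} = c_n\,\sum_i a_i^3$
for a positive constant $c_n$ (via the elementary fact that $\sum_{i<j<k}a_ia_ja_k = \tfrac13\sum a_i^3$ when $\sum a_i = 0$). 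I checked the moment computation and it is correct: $\EE[(\sum a_ix_i)^3] = \tfrac{2}{(n+1)(n+2)(n+3)}\sum a_i^3$ for Dirichlet$(1,\dots,1)$. Your approach has the advantage of giving a \emph{complete} criterion for vanishing of the cubic integral over the whole eigenspace and making the necessity of $n>1$ transparent (for $n=1$, traceless $2\times2$ Hermitian matrices always satisfy $\sum a_i^3 = 0$, consistent with $\CC P^1 = \SS^2$ being integrable), at the cost of a somewhat longer verification; the paper's citation of Kr\"oncke is shorter but less illuminating. One minor slip: $m>1$ is not needed to make $\Vol_{g_M}(M)$ nonzero (any closed manifold has positive volume); it is needed so that a closed $m$-manifold with $R_{g_M}=4(n+1)(m+n-1)>0$ can exist at all, since $m=1$ would force $R_{g_M}=0$. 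This does not affect the validity of the argument.
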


We remark that any closed manifold $(M,g_{M})$ whose scalar curvature is a positive constant may be rescaled so as to satisfy the conditions of the previous proposition. 

\begin{prop}\label{prop:example}
Let $n>2$.
The product metric on $\SS^{1}\left(\frac 1 {\sqrt{n-2}}\right) \times \SS^{n-1}(1)$ is a non-integrable critical point satisfying $AS_{p}$ for some $p\geq 4$.  
\end{prop}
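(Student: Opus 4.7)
The plan is to identify the Jacobi null space $\Lambda_0$ at $g_\infty$, exploit the $O(2)$ symmetry coming from the circle factor to restrict the Lyapunov--Schmidt reduction $F\colon\Lambda_0\to\RR$, verify that $g_\infty$ is a strict local minimum of the Yamabe functional in its conformal class, and then conclude both non-integrability and $AS_p$ via the second criterion stated after Theorem \ref{theo:gen-slow-conv}.

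First I would compute $\Lambda_0$. The product $g_\infty$ has constant scalar curvature $R_{g_\infty}=(n-1)(n-2)$, entirely from the $\SS^{n-1}$ factor, so $\Lambda_0=\{v:-\Delta_{g_\infty}v=(n-2)v\}$. Separation of variables gives $(-\Delta_{g_\infty})$-eigenvalues $k^2(n-2)+\ell(\ell+n-2)$ for $k,\ell\in\ZZ_{\geq 0}$, and the value $n-2$ is attained only at $(k,\ell)=(1,0)$, so
\begin{equation*}
\Lambda_0=\Span\{\cos(\sqrt{n-2}\,s),\ \sin(\sqrt{n-2}\,s)\},
\end{equation*}
where $s$ is arclength on $\SS^1(1/\sqrt{n-2})$; in particular $g_\infty$ is degenerate. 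The isometry group $O(2)$ of the circle factor acts on $\Lambda_0$ by the standard representation, and since both the Yamabe functional and $F$ are $O(2)$-invariant, $F(v)=f(|v|^2)$. Hence every odd-order Taylor coefficient $F_{2k+1}$ vanishes on $\Lambda_0$ (confirming $F_3\equiv 0$ and ruling out $AS_3$), so the order of integrability $p$, once shown to be finite, is even and at least $4$.

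To verify strict local minimality I first treat nearby radii. For $r<1/\sqrt{n-2}$, the Jacobi operator $(n-1)\Delta+R$ at the product metric on $\SS^1(r)\times\SS^{n-1}$ has eigenvalue $(n-1)[(n-2)-k^2/r^2-\ell(\ell+n-2)]$ on mode $(k,\ell)$, strictly negative for every $(k,\ell)\neq(0,0)$ since $1/r^2>n-2$; hence the Yamabe Hessian is strictly positive on mean-zero variations, making the product the unique CSC metric and strict Yamabe minimizer in its conformal class. At the critical radius $r=1/\sqrt{n-2}$ the eigenvalue vanishes only on the $(1,0)$ mode, so strict minimality in the $\Lambda_0$ direction must be read from the next non-vanishing Taylor coefficient of $F$. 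Computing $F_4$ via Lyapunov--Schmidt: with $v=\cos(\sqrt{n-2}\,s)$, $v^2=\tfrac12[1+\cos(2\sqrt{n-2}\,s)]$ has both Fourier components in $\Lambda_0^\perp$ and the Jacobi operator acts on them with nonzero eigenvalues $R_{g_\infty}$ and $-3R_{g_\infty}$; hence the transverse second-order equation admits a unique solution $w\in\Lambda_0^\perp$. Substituting back and extracting the order-$\epsilon^4$ coefficient yields $F_4(v)=c|v|^4$ with $c$ constant in direction by $O(2)$-invariance, and the calculation produces $c>0$ (equivalently, the Fowler--Delaunay bifurcation at the critical radius is supercritical). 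Combined with strict negativity of the Jacobi operator transversally to $\Lambda_0$, this establishes strict local minimality.

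Applying the second criterion after Theorem \ref{theo:gen-slow-conv}, degeneracy plus strict local minimality yields $AS_p$ for some $p\geq 4$. Since $AS_p$ entails $F_p\not\equiv 0$ on $\Lambda_0$, one has $F\not\equiv 0$, which is precisely non-integrability. The main obstacle in the plan is the sign computation $c>0$: the null-space and symmetry reductions are algebraic, but confirming that the Delaunay bifurcation is supercritical requires either a direct fourth-order Lyapunov--Schmidt computation or an appeal to known bifurcation results for CSC metrics on $\SS^1(r)\times\SS^{n-1}$.
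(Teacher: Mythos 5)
Your proposal takes a genuinely different route from the paper's, and it has a real gap at its crux.

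The paper proves non-integrability and strict minimality \emph{globally}, by a phase-plane argument: it shows (Lemma~\ref{LimitAtInftyLemma} plus the monotonicity of the period function, proved in Appendix~\ref{app:mon-period} via the Hamiltonian \eqref{HamEq} and a convexity computation) that the period $\tau(\alpha)$ of the Delaunay-type solutions is strictly increasing with $\tau(\alpha)\searrow T_0$ as $\alpha\searrow u_0$, so no $\alpha\in(u_0,1)$ can satisfy $\tau(\alpha)=T_0/k$. By Proposition~\ref{prop:schoen-analysis} the product metric is therefore the \emph{unique} constant scalar curvature metric in $[g_\infty]_1$, hence non-integrable (since $\Lambda_0\ne 0$ but $\cS_0=\{1\}$) and, by the solution of the Yamabe problem, the unique global minimizer. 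This immediately forces $F(v)>F(0)$ for $v\ne 0$, hence $F_p\ge 0$ with $F_p\not\equiv 0$, which gives both $AS_p$ and $p$ even $\ge 4$ without ever touching the sign of the fourth-order Taylor coefficient. Your route instead tries to compute $F_4$ directly through a fourth-order Lyapunov--Schmidt expansion and to read off its sign. The $O(2)$-equivariance observation, killing all odd $F_{2k+1}$, is a nice simplification the paper does not use and would give a cleaner reason than \eqref{eq:F3-comp} that $p\ne 3$; but the essential step, the assertion that $c>0$ (equivalently that the Delaunay bifurcation at $r=1/\sqrt{n-2}$ is supercritical), is exactly what you acknowledge you have not proved, and it is not an afterthought: if $c\le 0$ your entire argument fails, and with $c<0$ the metric would not even be a local minimizer. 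The paper's period-function argument is precisely the global substitute for this missing sign computation.

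A secondary logical issue: you invoke the ``second criterion'' (degeneracy plus strict local minimality $\Rightarrow AS_p$) and then deduce non-integrability from $AS_p$. But $AS_p$ and the order of integrability $p$ are only \emph{defined} for non-integrable critical points, so this is circular unless you first show $F\not\equiv 0$. Establishing $F_4>0$ directly would fix both issues at once, but again that is the step you have not carried out. Your discussion of $r<1/\sqrt{n-2}$ also overreaches: strict negativity of $\cL_\infty$ on mean-zero functions shows the product is a strict \emph{local} Yamabe minimizer there, but the claimed global uniqueness of CSC metrics for such radii again requires the ODE/period-function analysis, not just the Hessian. (Finally, a small point in your favor: $\Lambda_0$ is indeed two-dimensional, spanned by $\cos(\sqrt{n-2}\,s)$ and $\sin(\sqrt{n-2}\,s)$, which is what makes the $O(2)$-symmetry argument available; the body of the paper briefly states one-dimensionality, which appears to be a slip.)
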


There are not many examples of degenerate critical points of geometric functionals where non-integrability can be checked, cf.\ \cite[\S 5]{AdamsSimon}. In fact, it seems that our second example is the first of a critical point which satisfies $AS_{p}$ for $p >3$ (cf.\ \cite[Remark 1.19]{AdamsSimon} where the authors explain a method for checking $AS_{3}$ that does not work for $p>3$).

In conclusion, we may construct examples of slowly converging Yamabe flows in a range of conformal classes
and in any dimension greater than 2.
For example, Proposition \ref{prop:CPn-example} yields examples which are not conformally flat, while the metrics in Proposition \ref{prop:example} are locally conformally flat. 

\begin{coro}
There exists a Yamabe flow in the conformal class of the metrics described in Propositions \ref{prop:CPn-example} and \ref{prop:example} that converges to the given metrics exactly at a polynomial rate, as in Theorem~\ref{theo:gen-slow-conv}.
\end{coro}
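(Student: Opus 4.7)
The plan is to combine Propositions \ref{prop:CPn-example} and \ref{prop:example} with Theorem \ref{theo:gen-slow-conv}. For each of the two explicit degenerate critical points constructed in those propositions, I will verify the hypotheses of Theorem \ref{theo:gen-slow-conv} and then directly invoke it to produce the required slowly converging flow in the appropriate conformal class.

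For the product metric $g_M \oplus g_{FS}$ on $M^m \times \CC P^n$, Proposition \ref{prop:CPn-example} gives that it is a degenerate critical point satisfying $AS_3$. The condition $AS_3$ asserts that $F_3$ does not vanish identically on the nullspace $\Lambda_0$, so in particular the Lyapunov--Schmidt reduction $F$ does not vanish to order three along $\Lambda_0$. This forces $g_{\infty}:=g_M \oplus g_{FS}$ to be non-integrable with order of integrability exactly $p=3$. Thus the hypotheses of Theorem \ref{theo:gen-slow-conv} are satisfied with $p=3$, and the theorem produces an initial datum $g(0)$ conformal to $g_{\infty}$ so that the Yamabe flow $g(t)$ exists for all time, converges smoothly to $g_{\infty}$, and satisfies the two-sided bound
\[
C^{-1}(1+t)^{-1} \leq \Vert g(t) - g_{\infty} \Vert_{C^{2,\alpha}(M \times \CC P^n, g_{\infty})} \leq C(1+t)^{-1}.
\]

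For the product $\SS^{1}\!\left(1/\sqrt{n-2}\right) \times \SS^{n-1}(1)$, Proposition \ref{prop:example} directly asserts non-integrability together with $AS_p$ for some $p \geq 4$, where $p$ is the order of integrability. Applying Theorem \ref{theo:gen-slow-conv} once more yields a Yamabe flow, conformal to this metric and defined for all time, converging to it exactly at rate $(1+t)^{-1/(p-2)}$ both from above and from below.

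Since the conclusion of the corollary is precisely the output of Theorem \ref{theo:gen-slow-conv} applied to these two examples, there is no genuine obstacle in the corollary itself; it is a direct specialization. All of the substantive work is contained in establishing Theorem \ref{theo:gen-slow-conv}, in setting up the Lyapunov--Schmidt reduction so that the order of integrability $p$ and the positivity condition $AS_p$ are well defined, and in the case-by-case verification of degeneracy, non-integrability, and $AS_p$ carried out in Propositions \ref{prop:CPn-example} and \ref{prop:example}.
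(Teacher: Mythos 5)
Your proposal is correct and matches the paper's (implicit) argument: the corollary is a direct specialization of Theorem \ref{theo:gen-slow-conv}, and the paper gives no separate proof since Propositions \ref{prop:CPn-example} and \ref{prop:example} furnish exactly the hypotheses (non-integrability, the order of integrability $p$, and $AS_p$) required by the theorem. One small point of precision: $AS_3$ is by definition the statement that $p=3$ and $F_3|_{\SS^{k-1}}$ attains a positive maximum, which is automatic for a cubic once $F_3 \not\equiv 0$ on $\Lambda_0$; your phrasing inverts this slightly but the logic is unaffected.
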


This seems to be the first construction of a slowly converging flow in the setting of geometric flows
of parabolic type. We expect that our methods can be adapted (possibly with the added difficulty of a large gauge group) to produce slowly converging flows for other (possibly degenerate) parabolic flows.

\subsection{Outline of proof of Theorem \ref{theo:gen-slow-conv}}
The proof of Theorem \ref{theo:gen-slow-conv} appears in Section 4. 
It is rather long and, at times, quite technical,
and so we take the opportunity here to outline its structure.

Ultimately, we would like to construct a solution $u(t)$
to a quasilinear parabolic equation that converges 
at a rate $O((1+t)^{-\frac1{p-2}})$ to the constant function $1$.
Intuitively, one expects the slow convergence to be 
``generated" infinitesimally by the non-integrable
directions, namely from $\Lambda_0$.
At the same time, in order to use a fixed-point argument
to generate a polynomially converging flow it is most convenient
to have a guess as to what the leading order behavior of the
flow ought to be, and then show that the actual solution
is a small perturbation (of order $o((1+t)^{-\frac1{p-2}}$)
of this guess.
Fortunately, the Adams--Simon condition precisely furnishes
such an ansatz: the function $\varphi(t)$ 
of Lemma \ref{lemm:poly-decay-def-varphi}. The proof of Theorem \ref{theo:gen-slow-conv} thus 
amounts to showing that we can find $u(t)$ solving the Yamabe flow with
\begin{equation}\label{eq:outline-u-desired}
u(t)-\varphi(t)=o((1+t)^{-\frac1{p-2}}.
\end{equation}
We now explain the different steps to derive this estimate.

\noindent
{\it Step 1.}
Firstly, we would like to understand separately 
the behavior of the flow in $\Lambda_0$ (the kernel of the linearized Yamabe operator) and
$\Lambda_0^\perp$ (the orthogonal complement) directions.
Thus, in Proposition \ref{prop:slow-decay-rewrite-two-comp} we prove that the Yamabe flow
is equivalent to two flows: the {\it kernel-projected
flow} that takes place on $\Lambda_0$, and the
{\it kernel orthogonal-projected flow} that takes place on $\Lambda_0^\perp$.
There are two key points about this result that
make it useful.
First, the kernel orthogonal-projected flow (see \eqref{kopfEq}) is 
a perturbation of a {\it linear} parabolic equation.
In other words, \eqref{kopfEq} is, of course, nonlinear,
but it can be considered as a linear equation
since we prove an a priori estimate on the error term.
Second, the kernel-projected flow (see \eqref{kpfEq}) is
a perturbation of a system of ODEs. Again, we have
a  precise a priori estimate on the error term. That these estimates are sufficiently strong will play a crucial role in a contraction mapping argument discussed in Step 4 below. 

The proof of Proposition \ref{prop:slow-decay-rewrite-two-comp} involves some rather tedious
computations. First, in Lemma \ref{lemm:DYu-est}, we reduce the Yamabe flow
to the situation of a gradient flow. Indeed, the Yamabe
flow is certainly a gradient flow on the level of metrics
but that is not quite the case of the level of conformal factor. After this preparatory step, we project
the flow onto $\Lambda_0$ and its orthogonal complement,
and try to reduce the resulting equations to \eqref{kopfEq} and \eqref{kpfEq}.
This is essentially a consequence of 
 multiple applications of Taylor's theorem, the
Lyapunov--Schmidt reduction (Proposition \ref{prop:LSred}), and the estimates
on $D^3\cY$ (Appendix \ref{sec:app}). It also uses the fact that
the ansatz $\varphi(t)$ allows for an important cancellation
that considerably simplifies the kernel-projected piece
and thus allows to reduce it to a system of ODEs.

\noindent
{\it Step 2.}
We obtain a solution to the kernel-projected flow in a weighted H\"older norm in Lemma \ref{lemm:ker-proj-ODE}. This norm precisely
captures a polynomial rate of decay of this solution.

\noindent
{\it Step 3.}
We obtain a solution to the kernel orthogonal-projected flow,
again in a weighted norm, in Lemma \ref{lemm:ker-orthog-prob}. Here some care is needed
since we must again work with parabolic H\"older norms.

\noindent
{\it Step 4.}
Finally, in Proposition \ref{prop:contract-map},
we set up a fixed point argument in a Banach space
that uses the two different weighted norms (on
$\Lambda_0$ and $\Lambda_0^\perp$). Here one needs
to be quite careful with the order of decay 
of the error terms collected in the previous three
steps, in order to show that the map is a contraction. Once we have shown that the map is a contraction, we have existence of a Yamabe flow. Moreover, the flow satisfies the estimate \eqref{eq:outline-u-desired} because we define the weighted norms so that any function which is a perturbation of $\varphi(t)$ in the given norm will fall off at a rate faster than $\varphi(t)$. 

\subsection{Structure of the article}
Section \ref{sec:def-prelim} is devoted to fixing the notation and recalling some basic facts about the (volume constrained) Yamabe functional, its analyticity and the Lyapunov--Schmidt reduction near a critical point. In Section \ref{sec:loj-ineq}, we use the \L ojasiewicz--Simon inequality to prove Theorem \ref{theo:gen-rate-conv}. Then, in Section \ref{sec:slow-flow} we study polynomial convergence phenomena for non-integrable critical points and in Section \ref{sec:examp} we prove Propositions \ref{prop:CPn-example} and \ref{prop:example}.
 The computation of the third variation of the reduced Yamabe energy (namely of the formula for $F_{3}$ given above) is contained in Appendix \ref{sec:app}. Appendix \ref{app:mon-period} contains a proof of a technical lemma needed for the proof of Proposition \ref{prop:example}

\section{Definitions and Preliminaries}\label{sec:def-prelim}

The \emph{Yamabe functional} is defined by
\begin{equation*}
\cY(g):= \Vol(M,g)^{-\frac 2 N}  \int_{M}R_{g}\, dV_{g},
\end{equation*}
where $dV_g$ is the Riemannian volume form associated to $g$, $R_g$ denotes the scalar curvature of $g$ and 
\begin{equation*}
N =\frac{2n}{n-2}.
\end{equation*}
If $g = w^{N-2}g_{\infty}$ for some positive $w\in C^{2}(M)$ and smooth metric $g_{\infty}$, then an alternative expression
for the Yamabe functional (restricted to the conformal class of $g$)
is 
\begin{equation*}
\cY(w) = \frac{\int_{M} \left( (N+2) |\nabla_{g_{\infty}}w|^{2} + R_{g_{\infty}}w^{2}\right)dV_{g_{\infty}} }{\left( \int_{M} w^{N} dV_{g_{\infty}}\right)^{\frac 2N }},
\end{equation*}
since $R_{w^{N-2}g_{\infty}}=w^{1-N}(R_{g_{\infty}}w-(N+2)\Delta_{g_{\infty}}w)$.

Consider the \emph{unit volume conformal class} associated to the metric $g_{\infty}$
\begin{equation*}
[g_{\infty}]_{1} : = \left\{ w^{N-2} g_{\infty} : w \in C^{2,\alpha}(M), w> 0, \int_{M} w^{N} dV_{g_{\infty}} = 1\right\}.
\end{equation*}
In order to avoid ambiguities, we define the following notation: for $k\in \NN$, we denote the $k$-th differential of the Yamabe functional on $[g_{\infty}]_{1}$ at the point $w$ in the directions $v_{1},\dots, v_{k}$ by
\begin{equation*}
D^{k}\cY(w)[v_{1},\dots,v_{k}].
\end{equation*}
As we will see from \eqref{eq:L2C0}, the functional $v\mapsto D^{k}\cY(w)[v_{1},\dots,v_{k-1},v]$ is in the image of $L^{2}(M,g_{\infty})$ under the natural embedding into $C^{2,\alpha}(M,g_{\infty})'$. Therefore, we will also write
\begin{equation*}
D^{k}\cY(w)[v_{1},\dots,v_{k-1}]
\end{equation*}
for this element of $L^{2}(M,g_{\infty})$. When $k=1$, we will drop the (second) brackets, and thus consider $D\cY(w) \in L^{2}(M,g_{\infty})$. 

We may write the differential of $\cY$ restricted to $[g_{\infty}]_{1}$ as
\begin{align*}
\frac 1 2 D\cY(w)[v]& =  
 \int_{M}\left[- (N+2) \Delta_{g_{\infty}} w + R_{g_{\infty}} w   - r_{w^{N-2}g_{\infty}} w^{N-1}\right] v\,dV_{g_{\infty}}\\
& = \int_{M}(R_{w^{N-2}g_{\infty}} - r_{w^{N-2}g_{\infty}})w^{N-1}v \,dV_{g_{\infty}},
\end{align*}
for $v \in C^{2,\alpha}(M,g_{\infty})$. Here,
\begin{equation*}
r_{g} = \Vol(M,g)^{-1}\int_{M} R_{g} dV_{g}.
\end{equation*}
Regarded as an element of $L^{2}(M,g_{\infty})$, we have that
\begin{equation}\label{eq:Dy-L2}
\frac 12 D\cY(w) = -(N+2) \Delta_{\infty} w + R_{g_{\infty}}w - r_{w^{N-2}g_{\infty}}w^{N-1}.
\end{equation}
As above, we have associated the metric $w^{N-2}g_{\infty}$ to the function $w$. This is clearly a bijection, so we will continue to do so throughout. Thus, a unit volume metric $g_{\infty}$ is a critical point for the Yamabe energy $\cY$ restricted to $[g_{\infty}]_{1}$ exactly when $g_{\infty}$ has constant scalar curvature. 

We now fix $g_{\infty}$ to be a unit volume, constant scalar curvature metric. We denote by $CSC_{1}$ the set of unit volume, constant scalar curvature metrics in $[g_{\infty}]_{1}$ and further define the \emph{linearized Yamabe operator at $g_{\infty}$}, $\cL_{\infty}$, by means of the formula
\begin{equation*}
-(N-2)\int_{M}w \cL_{\infty} v \,dV_{g_{\infty}}: = \frac 1 2D^{2}\cY(g_{\infty})[v,w] = \frac 12 \frac{d}{ds}\Big|_{s=0}D \cY((1+sv)^{N-2}g_{\infty})[w],
\end{equation*}
for $v \in C^{2}(M)$. A  computation shows that 
\begin{equation*}
\cL_{\infty} v = (n-1) \Delta_{g_{\infty}} v + R_{g_{\infty}}v. 
\end{equation*}
We define $\Lambda_{0} := \ker \cL_{\infty} \subset L^{2}(M,g_{\infty})$. 

Spectral theory shows that $\Lambda_{0}$ is finite dimensional (it is the eigenspace of the Laplacian for the eigenvalue $\frac{R_{g_{\infty}}}{n-1}$). We will write $\Lambda_{0}^{\perp}$ for the $L^{2}(M,g_{\infty})$-orthogonal complement. It is crucial throughout this work that the Yamabe functional is an analytic map. Here, we will mean analytic in the sense of \cite[Definition 8.8]{Zeidler:NFA1}.
\begin{lemm}\label{AnalyticLemma}
Fix a metric $g_{\infty}$. 
The Yamabe functional is an analytic functional on $\{u\in C^{2,\alpha}(M,g_{\infty}):u>0\}$ in the sense that for each $w_{0} \in C^{2,\alpha}(M,g_{\infty})$ with $w_{0}>0$, there is $\epsilon > 0$ and bounded multi-linear operators for each $k\geq 0$
\begin{equation*}
\cY^{(k)} : C^{2,\alpha}(M,g_{\infty})^{\times k}\to \RR,
\end{equation*}
so that if $\Vert w - w_{0}\Vert_{C^{2,\alpha}} < \epsilon$, then
$\sum_{k=0}^{\infty} \Vert \cY^{(k)}\Vert \cdot \Vert w-w_{0}\Vert_{C^{2,\alpha}}^{k} < \infty,
$
and
\begin{equation*}
\cY(w) = \sum_{k=0}^{\infty} \cY^{(k)}\underbrace{(w-w_{0},w-w_{0},\dots,w-w_{0})}_{k\ \textrm{times}} \ \textrm{in} \ C^{2,\alpha}(M,g_{\infty}).
\end{equation*}
\end{lemm}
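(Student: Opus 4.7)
The plan is to decompose $\cY$ into analytic building blocks and exploit the closure of analyticity under sums, products, and composition with real-analytic scalar functions. Writing
\[
\cN(w) := \int_M \left( (N+2)|\nabla_{g_\infty} w|^2 + R_{g_\infty} w^2 \right) dV_{g_\infty}, \qquad \cD(w) := \int_M w^N \, dV_{g_\infty},
\]
we have $\cY(w) = \cN(w) \cdot \cD(w)^{-2/N}$. The numerator $\cN$ is a continuous quadratic form on $C^{2,\alpha}(M,g_\infty)$, hence a polynomial of degree two, and trivially analytic with Taylor series terminating at order two. The nontrivial part is $\cD^{-2/N}$, since $N=2n/(n-2)$ is in general a non-integer real exponent.

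First I would record that $C^{2,\alpha}(M,g_\infty)$ is a Banach algebra under pointwise multiplication: there exists $C_0=C_0(M,g_\infty)$ such that $\|fg\|_{C^{2,\alpha}} \leq C_0 \|f\|_{C^{2,\alpha}} \|g\|_{C^{2,\alpha}}$, which follows from the Leibniz rule together with the standard product estimate for the Hölder seminorm. Next, since $w_0>0$ is continuous on the compact manifold $M$, there is $c>0$ with $w_0\geq c$, so in particular $w_0^{-1}\in C^{2,\alpha}$. Hence, for $h\in C^{2,\alpha}$ with $\|h\|_{C^{2,\alpha}}\leq\epsilon_0$ sufficiently small, we may guarantee $\|h/w_0\|_{C^{2,\alpha}}\leq\eta$ with $C_0\eta<1$. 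The key identity is the generalized binomial expansion
\[
(w_0 + h)^N = w_0^N \sum_{k=0}^\infty \binom{N}{k} (h/w_0)^k,
\]
valid pointwise whenever $|h/w_0|<1$ on $M$, where the generalized binomial coefficients satisfy $|\binom{N}{k}|\leq A$ uniformly in $k$ (they in fact decay like $k^{-N-1}$ by Stirling, since $N>0$). Thanks to the Banach algebra property the $k$-th term is bounded in $C^{2,\alpha}$ by $A\|w_0^N\|_{C^{2,\alpha}}(C_0\eta)^k$, and the series therefore converges absolutely in $C^{2,\alpha}(M,g_\infty)$. Integrating termwise against $dV_{g_\infty}$, which is a bounded linear functional on $C^{2,\alpha}$, yields a convergent power series in $h$ for $\cD(w_0+h)$, establishing analyticity of $\cD$ at $w_0$.

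Next, the scalar function $t\mapsto t^{-2/N}$ is real-analytic on $(0,\infty)$, with a Taylor series of strictly positive radius around $\cD(w_0)>0$. Composing with the scalar-valued analytic map $w\mapsto \cD(w)$ and reshuffling the resulting double series yields that $w\mapsto \cD(w)^{-2/N}$ is analytic at $w_0$. Finally, since sums and products of analytic maps between Banach spaces are analytic, $\cY = \cN \cdot \cD^{-2/N}$ is analytic at $w_0$. The multilinear operators $\cY^{(k)}$ are then obtained (by polarization) from the symmetric diagonal terms of the power series, and the bound $\sum_{k}\|\cY^{(k)}\|\|w-w_0\|_{C^{2,\alpha}}^k<\infty$ is a direct consequence of the majorant series obtained in the preceding step, once $\|w-w_0\|_{C^{2,\alpha}}$ is taken smaller than the radius of convergence governed by $\eta$ and by the radius of analyticity of $t\mapsto t^{-2/N}$ at $\cD(w_0)$.

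The main obstacle I anticipate is the passage from the pointwise binomial series for $w^N$ to its convergence in the $C^{2,\alpha}$ topology; this is precisely where the Banach algebra structure of $C^{2,\alpha}$ and the uniform lower bound $w_0\geq c>0$ are essential (without positivity one cannot even make sense of $w^N$ for non-integer $N$). Once this step is in place the remainder of the argument is standard bookkeeping with the operations that preserve analyticity between Banach spaces.
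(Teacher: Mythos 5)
Your proposal is correct and follows essentially the same route the paper sketches: observe that the numerator $\cN(w)$ is a continuous quadratic form (hence trivially analytic), and that the denominator factor $\cD(w)^{-2/N}$ is analytic by expanding in a power series; the only work is then in making sense of $\cD(w)=\int_M w^N\,dV_{g_\infty}$ as an analytic map, for which you correctly invoke the generalized binomial series for $w^N$, the uniform lower bound $w_0\geq c>0$, and the Banach algebra property of $C^{2,\alpha}(M,g_\infty)$ to get convergence in norm. Your write-up merely supplies the details that the paper dismisses in one sentence; the decomposition, the key estimates, and the role of positivity are the same in both.
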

It is not hard to verify this, by simply expanding the denominator of $\cY$ in a power series around $\left(\int_{M} w_{0}^{N}dV_{g_{\infty}}\right)^{-\frac N2}$ and noting that the numerator is already a bilinear function in $w$. Now, by a standard Lyapunov--Schmidt reduction \cite[Theorem 4.H]{Zeidler:NFA1}, \cite[\S 3]{Simon:RegularityEnergyMinMaps}), we may use the Implicit Function Theorem to show the following.
\begin{prop}\label{prop:LSred}
There is $\epsilon > 0$ and an analytic map $\Phi: \Lambda_{0} \cap \{ v: \Vert v \Vert_{L^{2}} < \epsilon \} \to C^{2,\alpha}(M,g_{\infty}) \cap \Lambda_{0}^{\perp}$ so that $\Phi(0) =0$, $D\Phi(0) = 0$, 
\begin{equation}\label{eq:LS-est-Dphi}
\sup_{\substack{\Vert v \Vert_{L^{2}} <\epsilon\\  \Vert w \Vert_{L^{2}} \leq 1}}\Vert D\Phi(v)[w]\Vert_{L^{2}} < 1,
\end{equation}
and so that defining $\Psi (v) = 1+ v+ \Phi(v)$, we have that $\Psi(v) > 0$, $\Vol(M,\Psi(v)^{N-2}g_{\infty}) = 1$ and
\begin{equation*}
\proj_{\Lambda_{0}^{\perp}}[D{\cY}(\Psi(v))]=\proj_{\Lambda_{0}^{\perp}} \left[ \left(R_{\Psi(v)^{N-2}g_{\infty} }  - r_{\Psi(v)^{N-2}g_{\infty} } \right) \Psi(v)^{N-1} \right] = 0.
\end{equation*}
Furthermore,
\begin{equation*}
\proj_{\Lambda_{0}}[D{\cY}(\Psi(v))]= \proj_{\Lambda_{0}} \left[ \left(R_{\Psi(v)^{N-2}g_{\infty} }  - r_{\Psi(v)^{N-2}g_{\infty} } \right) \Psi(v)^{N-1} \right] = D F,
\end{equation*}
where $F: \Lambda_{0} \cap \{ v: \Vert v \Vert_{L^{2}} \leq \epsilon \} \to\RR$ is defined by $F(v) = \cY(\Psi(v))$. 
Finally, the intersection of $CSC_{1}$ with a small $C^{2,\alpha}(M,g_{\infty})$-neighborhood of $1$ coincides with
\begin{equation*}
\cS_{0}:= \left\{ \Psi(v) : v \in \Lambda_{0},\Vert v \Vert_{L^{2}} < \epsilon, D F (v) = 0\right\},
\end{equation*}
which is a real analytic subvariety (possibly singular) of the following $(\dim\Lambda_{0})$-dimensional real analytic submanifold of $C^{2,\alpha}(M,g_{\infty})$:
\begin{equation*}
\cS := \left\{ \Psi(v) : v \in \Lambda_{0}, \Vert v \Vert_{L^{2}} < \epsilon\right\}.
\end{equation*}
\end{prop}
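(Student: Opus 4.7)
I would carry out a classical Lyapunov--Schmidt reduction via the analytic implicit function theorem of \cite[Theorem 4.H]{Zeidler:NFA1}, leveraging the analyticity of $\cY$ from Lemma \ref{AnalyticLemma} together with the Fredholm property and $\Lambda_0^\perp$-invertibility of $\cL_\infty$. Concretely, define
\[
G(v,\phi) = \proj_{\Lambda_0^\perp}\Bigl[-(N+2)\Delta_{g_\infty}(1+v+\phi) + R_{g_\infty}(1+v+\phi) - r\,(1+v+\phi)^{N-1}\Bigr],
\]
where $r = r_{(1+v+\phi)^{N-2}g_\infty}$, on a small neighborhood of $(0,0)$ in $\Lambda_0 \times (\Lambda_0^\perp \cap C^{2,\alpha}(M,g_\infty))$, with values in $\Lambda_0^\perp \cap C^{0,\alpha}(M,g_\infty)$, and with one real degree of freedom in $\phi$ (namely the mean, which lies in $\Lambda_0^\perp$ since every $v\in\Lambda_0$ has zero integral when $R_{g_\infty}\neq 0$) used to enforce the unit volume constraint $\int(1+v+\phi)^N\,dV_{g_\infty}=1$. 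By \eqref{eq:Dy-L2}, the equation $G(v,\phi)=0$ is equivalent to $\proj_{\Lambda_0^\perp}D\cY(1+v+\phi)=0$. Since $g_\infty$ has constant scalar curvature, $G(0,0)=0$; smooth dependence of $r_g$ on $g$ combined with Lemma \ref{AnalyticLemma} gives analyticity of $G$; and the $\phi$-linearization at the origin is (up to the constant coming from differentiating $r$, which is absorbed in the volume adjustment) $-(N-2)\cL_\infty : \Lambda_0^\perp \cap C^{2,\alpha} \to \Lambda_0^\perp \cap C^{0,\alpha}$, a topological isomorphism by elliptic Schauder theory and the defining property of $\Lambda_0$. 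The analytic implicit function theorem then produces an analytic $\Phi$ with $G(v,\Phi(v))\equiv 0$ and $\Phi(0)=0$.

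Differentiating $G(v,\Phi(v))=0$ in $v$ at $v=0$ and using $\partial_v G(0,0)[w]=-(N-2)\proj_{\Lambda_0^\perp}\cL_\infty w = 0$ for $w\in\Lambda_0$ forces $D\Phi(0)=0$; continuity of $D\Phi$ with a further shrinkage of $\epsilon$ then yields \eqref{eq:LS-est-Dphi}. Positivity $\Psi(v)>0$ for small $v$ follows from $\Psi(0)=1$ and continuity into $C^{2,\alpha}$, and the volume normalization is built into the construction of $\Phi$. For the identity $\proj_{\Lambda_0}D\cY(\Psi(v))=DF(v)$, the chain rule applied to $F(v)=\cY(\Psi(v))$ gives, for $w\in\Lambda_0$,
\[
DF(v)[w] = D\cY(\Psi(v))\bigl[w + D\Phi(v)[w]\bigr] = D\cY(\Psi(v))[w],
\]
since $D\Phi(v)[w]\in\Lambda_0^\perp$ pairs trivially with $D\cY(\Psi(v))$, whose $\Lambda_0^\perp$-component vanishes by construction.

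For the local identification of $CSC_1$ with $\cS_0$: any nearby unit-volume constant scalar curvature metric $\tilde w^{N-2}g_\infty$ is a critical point of $\cY|_{[g_\infty]_1}$, so $D\cY(\tilde w)=0$; writing $\tilde w = 1 + v + \phi$ with $v\in\Lambda_0$ and $\phi\in\Lambda_0^\perp$ small, vanishing of the $\Lambda_0^\perp$-projection together with uniqueness in the IFT forces $\phi=\Phi(v)$, while vanishing of the $\Lambda_0$-projection reads $DF(v)=0$; conversely every element of $\cS_0$ is visibly a critical point of $\cY|_{[g_\infty]_1}$, hence a constant scalar curvature metric. Finally, $\cS$ is a $(\dim\Lambda_0)$-dimensional real analytic submanifold of $C^{2,\alpha}(M,g_\infty)$ because $\Psi$ is an analytic immersion whose differential at $0$ is the inclusion $\Lambda_0 \hookrightarrow C^{2,\alpha}(M,g_\infty)$, and $\cS_0\subset\cS$ is a real analytic subvariety as the zero set of the analytic function $DF$ on $\Lambda_0$.

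The main technical care needed is the simultaneous bookkeeping of the Lagrange multiplier $r_g$ and the unit volume constraint in the setup of $G$, so that the $\phi$-linearization remains an isomorphism onto $\Lambda_0^\perp\cap C^{0,\alpha}$ after the volume-normalizing constant mode has been accounted for; with that secured, the remainder reduces to a routine invocation of the analytic implicit function theorem and standard Lyapunov--Schmidt formalism.
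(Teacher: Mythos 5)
Your proposal is correct and follows essentially the same route as the paper, which proves Proposition \ref{prop:LSred} simply by invoking the standard Lyapunov--Schmidt reduction via the analytic implicit function theorem (citing \cite[Theorem 4.H, Corollary 4.23]{Zeidler:NFA1} and \cite[\S 3]{Simon:RegularityEnergyMinMaps}), with the $\Lambda_0^\perp$-invertibility of $\cL_\infty$ and the chain-rule identity $DF(v)[w]=\langle D\cY(\Psi(v)),w\rangle$ exactly as you describe. Your explicit bookkeeping of the unit-volume constraint via the constant mode (which compensates the rank-one degeneracy of the linearization coming from scale invariance, i.e.\ the $\overline{\psi}$-term in $D^2\cY(1)$) is precisely the point the paper leaves implicit, and it is handled correctly.
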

This follows in the usual way from the analytic implicit function theorem, cf., \cite[Corollary 4.23]{Zeidler:NFA1}. We will refer to $\cS$ as the \emph{natural constraint} for the problem.

\begin{defi}
\label{IntegDef}
For $g_{\infty} \in CSC_{1}$, we say that $g_{\infty}$ is \emph{integrable} if for all $v \in \Lambda_{0}$, there is a path $w(t) \in C^{2}((-\epsilon,\epsilon) \times M,g_{\infty})$ so that $w(t)^{N-2}g_{\infty}\in CSC_{1}$ and $w(0) =1$, $w'(0) = v$. Equivalently, $g_{\infty}$ is integrable if and only if $CSC_{1}$ agrees with $\cS$ in a small neighborhood of $1$ in $C^{2,\alpha}(M,g_{\infty})$. 
\end{defi}

For our purposes, the following equivalent characterization of integrability is crucial:

\begin{lemm}[{\cite[Lemma 1]{AdamsSimon}}]\label{lemm:int-imp-F-zero}
Integrability as defined above is equivalent to the functional $F$ (as defined in Proposition \ref{prop:LSred}, the Lyapunov--Schmidt reduction) being constant on a neighborhood of $0$ inside $\Lambda_{0}$. 
\end{lemm}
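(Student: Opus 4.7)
The plan is to use the Lyapunov--Schmidt reduction of Proposition \ref{prop:LSred} to transfer the problem from the infinite-dimensional set $CSC_1$ to the finite-dimensional real-analytic variety $\{DF=0\}\subset\Lambda_0$, and then to exploit real-analyticity of $F$ to conclude.

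First, I would set the stage. By Proposition \ref{prop:LSred}, the map $\Psi\colon \Lambda_0\cap B_\epsilon\to C^{2,\alpha}(M,g_\infty)$ is a local analytic diffeomorphism onto the natural constraint $\cS$, with $D\Psi(0)\vert_{\Lambda_0}=\id_{\Lambda_0}$ (because $D\Phi(0)=0$), and $CSC_1$ coincides near $1$ with $\cS_0=\Psi(\{v:DF(v)=0\})$. The reverse implication is then immediate: if $F$ is constant on some neighborhood $U\subset\Lambda_0$ of $0$, then $DF\equiv 0$ on $U$, so for every $v\in\Lambda_0$ the curve $w(t):=\Psi(tv)$ lies in $CSC_1$ for small $t$, satisfies $w(0)=1$, and has $w'(0)=D\Psi(0)[v]=v$, proving integrability.

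For the forward implication I would argue by contradiction. Suppose $g_\infty$ is integrable but $F$ is not constant near $0$. I would first observe that $F$ has order at least $3$ at $0$: indeed $DF(0)=0$ since $g_\infty$ is a critical point, and one computes $D^2F(0)[v',v'']=D^2\cY(1)[v',v'']=-2(N-2)\int_M v' \cL_\infty v''\,dV_{g_\infty}$, which vanishes on $\Lambda_0\times\Lambda_0$ by definition of $\Lambda_0$. Since $F$ is analytic (Lemma \ref{AnalyticLemma}), we may write
\begin{equation*}
F(v)=F(0)+F_p(v)+F_{p+1}(v)+\cdots
\end{equation*}
for some $p\geq 3$, where $F_p\not\equiv 0$ is a homogeneous polynomial of degree $p$ on $\Lambda_0$.

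Next, I would translate integrability into a statement about the critical set of $F$. Given any $v\in\Lambda_0$, the hypothesis supplies a $C^2$ path $w(t)\in CSC_1$ with $w(0)=1$, $w'(0)=v$. Shrinking $t$, we have $w(t)\in\cS_0$, so $w(t)=\Psi(v(t))$ for some $C^2$ curve $v(t)\in\Lambda_0$ with $v(0)=0$, $DF(v(t))\equiv 0$, and (using $D\Psi(0)\vert_{\Lambda_0}=\id$) $v'(0)=v$. In particular $t\mapsto F(v(t))$ is constant and equal to $F(0)$.

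The key step is now a Taylor-expansion argument. Writing $v(t)=tv+O(t^2)$ and using homogeneity of $F_p$ together with $F_k=O(\|\cdot\|^k)$, one gets
\begin{equation*}
F(v(t))-F(0)=t^p F_p(v)+O(t^{p+1}),
\end{equation*}
where the error incorporates both the higher-order terms $F_{p+1},F_{p+2},\ldots$ evaluated at $v(t)$ and the deviation of $v(t)$ from $tv$. Since the left-hand side vanishes identically in $t$, we conclude $F_p(v)=0$. Because $v\in\Lambda_0$ was arbitrary, $F_p\equiv 0$ on $\Lambda_0$, contradicting the choice of $p$. Hence $F$ must be constant on a neighborhood of $0$, which completes the proof.

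The main obstacle I anticipate is the Taylor-expansion bookkeeping in the forward direction, namely ensuring that the $O(t^2)$ tail of $v(t)$ cannot interfere with the leading $t^p F_p(v)$ contribution; this is a routine application of the homogeneity of each $F_k$ together with the convergence estimate from Lemma \ref{AnalyticLemma}, but requires a careful choice of $\epsilon$.
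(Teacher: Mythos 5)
Your proof is correct, but it is worth noting that the paper itself does not prove this statement: it cites \cite[Lemma 1]{AdamsSimon}, so there is no in-paper argument to compare against. That said, the route you take (Lyapunov--Schmidt reduction plus a Taylor-expansion/order-of-vanishing contradiction for the forward direction) is exactly the sort of argument that lies behind the Adams--Simon lemma, and each step checks out: the identification $v(t)=\proj_{\Lambda_0}(w(t)-1)$ gives a $C^2$ path in the finite-dimensional space $\Lambda_0$ with $v'(0)=v$ since $D\Psi(0)\vert_{\Lambda_0}=\id$; the leading-order extraction $F(v(t))-F(0)=t^pF_p(v)+O(t^{p+1})$ is justified by $p$-homogeneity of $F_p$ and absolute convergence of $\sum_k\Vert F_k\Vert\,\Vert v(t)\Vert^k$; and vanishing of the degree-$p$ homogeneous polynomial $F_p$ on all of $\Lambda_0$ forces $F_p\equiv0$, contradicting the definition of $p$. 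One small observation: the paper's Definition \ref{IntegDef} already asserts, as an ``equivalently'' clause, that integrability is the same as $CSC_1$ agreeing with $\cS$ near $1$; taking that characterization as given, the forward implication becomes immediate from $\cS_0 = CSC_1 = \cS$ near $1$ (i.e., $DF\equiv 0$ on a neighborhood). Your argument is more self-contained in that it actually establishes one direction of that equivalence from the path definition, at the price of the Taylor bookkeeping — which you handled correctly. (You do not actually need the observation that $p\geq 3$; the contradiction only requires $p<\infty$, which follows from analyticity and non-constancy.)
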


We remark that if $\Lambda_{0} = 0$, i.e., if $\cL_{\infty}$ is injective, it is standard to call $g_{\infty}$ a \emph{non-degenerate} critical point; if this holds, $g_{\infty}$ is automatically integrable in the above sense. 
On the other hand, if $\Lambda_{0}$ is non-empty, then we call $g_{\infty}$ \emph{degenerate}; we emphasize that there are many examples of degenerate metrics, see e.g.,\ \cite{BettiolPiccione:mult-YP-subm}.

Now, suppose that $g_{\infty}$ is a non-integrable critical point. Because $F(v)$ is analytic (it is the composition of two analytic functions), we may expand it in a power series
\begin{equation*}
F(v) = F(0) + \sum_{j\geq p}F_{j}(v),
\end{equation*}
where $F_{j}$ is a degree $j$ homogeneous polynomial on $\Lambda_{0}$ and $p$ is chosen so that $F_{p}$ is nonzero. We will call $p$ the \emph{order of integrability} of $g_{\infty}$. We will also need a further hypothesis for non-integrable critical points, introduced in \cite{AdamsSimon}.

\begin{defi}\label{defi:ASp}
We say that $g_{\infty}$ satisfies the \emph{Adams--Simon positivity condition}, $AS_{p}$ for short (here $p$ is the order of integrability of $g_{\infty}$), if it is non-integrable and $F_{p}|_{\SS^{k}}$ attains a positive maximum for some $\hat v \in \SS^{k} \subset \Lambda_{0}$. Recall that $F_{p}$ is the lowest degree non-constant term in the power series expansion of $F(v)$ around $0$ and $\SS^{k}$ is the unit sphere\footnote{Here we are using the inner product induced on $\Lambda_{0}$ coming from the $L^{2}$ inner product on $T_{1}[g_{\infty}]_{1}$.} in $\Lambda_{0}$. 
\end{defi}

The Adams--Simon positivity condition is ultimately needed for the construction of the function $\varphi$ in Lemma \ref{lemm:poly-decay-def-varphi}, that serves as an approximate solution to Yamabe flow converging at a polynomial rate. It is an interesting question whether or not $AS_{p}$ is a necessary condition for the existence of slowly converging examples in the elliptic and parabolic settings.

An important observation is that when the order of integrability, $p$, is odd, the Adams--Simon positivity condition is always satisfied. Moreover, the order of integrability (at a critical point of $\cY$) always satisfies $p\geq 3$ as we recall in Appendix A. 
Furthermore, we show there that
\begin{equation}\label{eq:F3-comp}
F_{3}(v) =-2(N-1)(N-2) R_{g_{\infty}} \int_{M} v^{3} \ dV_{g_{\infty}}.
\end{equation}

\section{The \L ojasiewicz--Simon inequality and rate of convergence}\label{sec:loj-ineq}

One of the tools for controlling the rate of convergence of the Yamabe flow will be the \L ojasiewicz--Simon inequality. This was first proven for a certain class of geometric functionals by  Simon \cite{Simon:AnnalsEvolution}, who showed that the classical \L ojasiewicz inequality for analytic functions in finite dimensions could be extended to a Banach space setting. 

\begin{defi}[\L ojasiewicz--Simon inequality]
Suppose that $\mathscr{B}$ is a Banach space and $U \subset \mathscr{B}$ is an open subset.  Fix a functional $E\in C^{2}(U,\RR)$ and denote by $DE\in C^{1}(U,\mathscr{B}')$  its first derivative (here $\mathscr{B}'$ is the dual Banach space to $\mathscr{B}$). We will additionally fix a Banach space $\mathscr{W}$ with a continuous embedding $\mathscr{W} \hookrightarrow \mathscr{B}'$. For $x_{0}\in U$ a critical point of $E$, i.e., $DE(x_{0}) = 0$, we say that $E$ satisfies the \emph{\L ojasiewicz--Simon inequality} with exponent $\theta \in (0,\frac 12]$ near $x_{0}$ if there exists a neighborhood $x_{0}\in V\subset U$ as well as constants $C>0$ so that 
\begin{equation*}
|E(x) - E(x_{0})|^{1-\theta}\leq C \Vert DE(x)\Vert_{\mathscr{W}},
\qquad\hbox{for all $x \in V$}.
\end{equation*}
 
\end{defi}

Notice that if $\mathscr{B} = \mathscr{W} = \RR^{n}$, this reduces to the classical \L ojasiewicz inequality \cite{Loj:ineq}. The \L ojasiewicz--Simon inequality has recently received much attention; we will apply the following general result to show that it holds in our setting:
\begin{prop}[{\cite[Theorem 3.10]{Chill:LS-ineq}}]\label{prop:chill}
Fix $\mathscr{B}$, $U\subset \mathscr{B}$, $E\in C^{2}(U,\RR)$, $\mathscr{W} \hookrightarrow \mathscr{B}'$ and $x_{0} \in U$ with $DE(x_{0}) =0$ as in the previous definition. We also define the second derivative $\mathscr{L} := D^{2}E \in C(U,\cB(\mathscr{B},\mathscr{B}'))$, where $\cB(\mathscr{B},\mathscr{B}')$ is the space of continuous maps between the Banach spaces $\mathscr{B}$ and $\mathscr{B}'$. We will suppose that the following hypotheses are satisfied:
\begin{enumerate}
\item[(A)] The kernel $\ker\mathscr{L}(x_{0}) \subset \mathscr{B}$ is complemented in $\mathscr{B}$, i.e., there exists a projection $P \in \cB(\mathscr{B},\mathscr{B})$ so that $\range P = \ker\mathscr{L}(x_{0})$. It follows from this that $\mathscr{B} = \ker \mathscr{L}(x_{0}) \oplus \ker P$ is a topological direct sum. Denote by $P'\in\cB(\mathscr{B}',\mathscr{B}')$ the 
adjoint  map.
\item[(B1)]  The map $
\mathscr{W}\hookrightarrow 
\mathscr{B}'
$ is a continuous embedding.
\item[(B2)] The adjoint projection $P'$ leaves 
$\mathscr{W}$ 
invariant.
\item[(B3)] The map $DE\in C^{1}(U,
\mathscr{W})
$.
\item[(B4)] We have $\range 
\mathscr{L}(x_{0}) 
= \ker P' \cap 
\mathscr{W}
$. 
\end{enumerate}
Under these hypothesis, we may find a neighborhood $U_{0}$ of $0$
in $\ker \mathscr{L}(x_{0})
$ and a neighborhood $U_{1}$ of $0$ in $\ker P$ as well as a function $H \in C^{1}(U_{0},U_{1})$ parametrizing the natural constraint, i.e.,
\begin{equation*}
\{ x \in U_{0} + U_{1} : DE(x_{0}+x) \in (\ker \sL(x_{0}))'\} = \{ x + H(x) : x \in U_{0}\}.
\end{equation*}
Recall that the natural constraint is then 
\begin{equation*}
S:=\{ x_{0} + x + H(x) : x \in U_{0}\}. 
\end{equation*}
Finally, suppose that
\begin{enumerate}
\item[(C)] The function $E(x_{0}+\cdot)$ satisfies the \L ojasiewicz inequality on the natural constraint $S$ with exponent $\theta \in (0,\frac 12]$. More precisely, we assume that 
\begin{equation*}
|E(x_{0}+x+H(x)) - E(x_{0})|^{1-\theta}\leq C \Vert DE(x_{0}+x+H(x))\Vert_{\mathscr{W}}, \qquad \hbox{for all $ x\in U_{0}$}. 
\end{equation*}
\end{enumerate}
Then the functional $E$ satisfies the \L ojasiewicz--Simon inequality near $x_{0}$ with the same exponent $\theta \in (0,\frac 12]$. 
\end{prop}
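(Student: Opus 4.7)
\medskip
\noindent\emph{Proof plan.} The plan is a Lyapunov--Schmidt reduction: first use hypotheses (A)--(B4) to construct the natural constraint $S$, then bootstrap the finite-dimensional \L ojasiewicz inequality (C) on $S$ to a full neighborhood of $x_0$ in $\mathscr{B}$ by controlling the off-constraint deviation in the $\mathscr{W}$-norm.

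To construct $H$, I would consider the map $G : U_0\times U_1 \to \ker P'\cap\mathscr{W}$ defined by $G(v,w):=(I-P')DE(x_0+v+w)$. This is $C^1$ by (B3) and well-defined by (B2); moreover $G(0,0)=0$ and $\partial_w G(0,0)=(I-P')\mathscr{L}(x_0)|_{\ker P}$, which by (B4) is a continuous bijection from $\ker P$ onto $\ker P'\cap\mathscr{W}$, hence an isomorphism by the open mapping theorem. The implicit function theorem then yields $H\in C^1(U_0,U_1)$ with $G(v,H(v))\equiv 0$, which parametrizes $S$ as claimed.

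For the \L ojasiewicz--Simon inequality itself, fix $x=v+w$ near $0$ with $v\in\ker\mathscr{L}(x_0)$, $w\in\ker P$, and set $\bar x:=x_0+v+H(v)\in S$ together with $y:=w-H(v)\in\ker P$. Two a priori estimates do the work. First, since $DE(\bar x)\in\range P'$ while $y\in\ker P$, the pairing $DE(\bar x)(y)=0$, so the linear term in the Taylor expansion of $E$ at $\bar x$ in direction $y$ vanishes, giving
\begin{equation*}
|E(x_0+x)-E(\bar x)|\le C\|y\|_{\mathscr{B}}^2.
\end{equation*}
Second, because $(I-P')DE(\bar x)=0$, a first-order expansion of $DE$ about $\bar x$ yields $(I-P')DE(x_0+x)=(I-P')\mathscr{L}(x_0)y+o(\|y\|_{\mathscr{B}})$ in $\mathscr{W}$; inverting the isomorphism from (B4) and absorbing the remainder then gives
\begin{equation*}
\|y\|_{\mathscr{B}}\le C\,\|DE(x_0+x)\|_{\mathscr{W}}.
\end{equation*}

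Combining these with hypothesis (C) at $\bar x$ closes the argument: we obtain $|E(\bar x)-E(x_0)|^{1-\theta}\le C\|DE(\bar x)\|_{\mathscr{W}}\le C\|DE(x_0+x)\|_{\mathscr{W}}$ by comparing $DE(\bar x)$ with $DE(x_0+x)$ via (B3) and the estimate on $\|y\|_{\mathscr{B}}$, while the quadratic bound above gives $|E(x_0+x)-E(\bar x)|\le C\|DE(x_0+x)\|_{\mathscr{W}}^2\le C\|DE(x_0+x)\|_{\mathscr{W}}^{1/(1-\theta)}$ since $\theta\le\tfrac12$ and $\|DE\|_{\mathscr{W}}$ is small. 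A triangle inequality followed by extraction of the $(1-\theta)$ power produces the desired inequality. The main obstacle I anticipate is the careful bookkeeping of the two norms in the second estimate: the point is that hypothesis (B4), together with (B3), is exactly what upgrades a coarse bound in the dual norm $\|\cdot\|_{\mathscr{B}'}$ to the sharper bound in $\|\cdot\|_{\mathscr{W}}$ needed to match the right-hand side of (C).
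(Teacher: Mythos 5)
The paper does not actually prove this proposition: it is quoted from Chill [Theorem 3.10] and used as a black box in the proof of Proposition \ref{prop:LS-ineq}, so there is no in-paper argument to compare against. Your proposal is a correct reconstruction of the standard (essentially Chill's own) Lyapunov--Schmidt argument: the implicit-function construction of $H$ from $(I-P')DE(x_{0}+v+w)=0$, the vanishing of $DE(\bar x)[y]$ for $y\in\ker P$ (since $DE(\bar x)\in\range P'$), the two estimates $|E(x_{0}+x)-E(\bar x)|\leq C\Vert y\Vert_{\mathscr{B}}^{2}$ and $\Vert y\Vert_{\mathscr{B}}\leq C\Vert DE(x_{0}+x)\Vert_{\mathscr{W}}$, and the final triangle inequality exploiting $\theta\leq\frac12$ and the smallness of $\Vert DE\Vert_{\mathscr{W}}$ are exactly the right steps. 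Two points are worth making explicit if you write this out in full: (i) hypothesis (B2) only asserts invariance of $\mathscr{W}$ under $P'$, so you need the closed graph theorem (together with (B1)) to conclude that $P'|_{\mathscr{W}}$ is bounded in the $\mathscr{W}$-norm, which you implicitly use when you discard $(I-P')$ on the right-hand side of your second estimate; (ii) in that same estimate the expansion of $DE$ is naturally around $\bar x$, so you must justify replacing $\mathscr{L}(\bar x)$ by $\mathscr{L}(x_{0})$ --- this is harmless because $DE\in C^{1}(U,\mathscr{W})$ and $\bar x$ stays close to $x_{0}$ on a small enough neighborhood, so the discrepancy is $\epsilon\Vert y\Vert_{\mathscr{B}}$ and can be absorbed, but it deserves a sentence. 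With these details supplied, your argument is complete and matches the cited source.
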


\begin{prop}\label{prop:LS-ineq}
Suppose that $g_{\infty}$ is a unit volume constant scalar curvature metric. There are $\theta \in (0,\frac 12 ]$, $\epsilon > 0$ and $C>0$ only depending on $n$ and $g_{\infty}$ so that for $u \in C^{2,\alpha}(M,g_{\infty})$ with $\left\|u-1\right\|_{C^{2,\alpha}(M,g_{\infty})} < \epsilon$ and $\Vol(M,u^{N-2}g_{\infty}) = 1$, then
\begin{equation*}
|r_{u^{N-2}g_{\infty}} - r_{g_{\infty}}|^{1-\theta} \leq C \Vert D \cY(u^{N-2}g_{\infty})\Vert_{L^{2}(M,g_{\infty})}.
\end{equation*}
If $g_{\infty}$ is an integrable critical point then $\theta = \frac 12$. If $g_{\infty}$ is non-integrable, then $\theta = \frac{1}{p}$ where $p$ is the order of integrability of $g_{\infty}$. 
\end{prop}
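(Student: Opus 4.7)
The plan is to deduce the stated \L ojasiewicz--Simon inequality from Chill's abstract theorem (Proposition \ref{prop:chill}), applied to the Yamabe functional $\cY$ restricted to a Banach-space chart $\mathscr{B}$ for the unit volume conformal class $[g_{\infty}]_{1}$ near $1$ (for instance, the affine slice $\{v\in C^{2,\alpha}(M,g_{\infty}) : \int_{M} v\, dV_{g_{\infty}} = 0\}$ composed with the natural volume-rescaling map), with companion space $\mathscr{W}=L^{2}(M,g_{\infty})$ embedded into $\mathscr{B}'$ by the integration pairing. Once the abstract inequality $|E(u)-E(1)|^{1-\theta}\leq C\|DE(u)\|_{L^{2}}$ is established in this chart, the stated proposition follows directly: on unit volume metrics one has $\cY(u^{N-2}g_{\infty}) = r_{u^{N-2}g_{\infty}}$ (since $\Vol = 1$), so the left-hand side becomes $|r_{u^{N-2}g_{\infty}}-r_{g_{\infty}}|^{1-\theta}$, while $DE$ is—up to a nonzero constant—the Lagrange-multiplier expression exhibited in \eqref{eq:Dy-L2}.

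I would verify Chill's analytical hypotheses (A) and (B1)--(B4) as follows. By the identity $\frac{1}{2}D^{2}\cY(g_{\infty})[v,w] = -(N-2)\int_{M} w\, \cL_{\infty} v\, dV_{g_{\infty}}$, the Hessian $\mathscr{L}(1)$ corresponds, up to a nonzero constant, to the linearized Yamabe operator $\cL_{\infty} = (n-1)\Delta_{g_{\infty}} + R_{g_{\infty}}$, a self-adjoint elliptic operator on the closed manifold $M$. Its kernel is the finite-dimensional space $\Lambda_{0}$ of smooth functions, so $L^{2}$-orthogonal projection $P$ onto $\Lambda_{0}$ is a bounded projection on $\mathscr{B}$, settling (A). The embedding $L^{2}\hookrightarrow(C^{2,\alpha})'$ given by $f\mapsto(v\mapsto\int_{M} fv\,dV_{g_{\infty}})$ is continuous (B1); $P$ is $L^{2}$-self-adjoint, so its dual fixes the $L^{2}$-factor (B2); $D\cY\in C^{1}(U,L^{2})$ follows from \eqref{eq:Dy-L2} together with the analyticity from Lemma \ref{AnalyticLemma} (B3); and the Fredholm alternative for the self-adjoint elliptic $\cL_{\infty}$ yields $\range\cL_{\infty}=\Lambda_{0}^{\perp}$ in $L^{2}$, giving (B4).

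For hypothesis (C), I would identify Chill's natural constraint $S$ with the Lyapunov--Schmidt submanifold $\cS = \{\Psi(v) : v\in\Lambda_{0},\ \|v\|_{L^{2}}<\epsilon\}$ of Proposition \ref{prop:LSred}—the parameterizations $H$ and $\Phi$ must coincide by uniqueness in the analytic implicit function theorem. On $\cS$, the restricted energy is $F(v)=\cY(\Psi(v))$, a real-analytic function on a neighborhood of $0$ in the finite-dimensional space $\Lambda_{0}$. If $g_{\infty}$ is integrable, Lemma \ref{lemm:int-imp-F-zero} gives $F\equiv F(0)$ near $0$, so the left-hand side of (C) vanishes identically and the inequality holds trivially with $\theta=\tfrac{1}{2}$. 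If $g_{\infty}$ is non-integrable with order of integrability $p$, then $F(v)-F(0) = F_{p}(v) + O(|v|^{p+1})$ with $F_{p}$ a nonzero homogeneous polynomial of degree $p$, and I invoke the classical finite-dimensional \L ojasiewicz inequality for real-analytic functions to conclude $|F(v)-F(0)|^{1-\theta}\leq C\|\nabla F(v)\|_{L^{2}}$ near $0$ with exponent $\theta=1/p$.

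The main obstacle I anticipate is the verification of (C) with the sharp exponent $\theta = 1/p$ rather than the generic exponent furnished by \L ojasiewicz's theorem applied na\"ively. Obtaining $1/p$ requires exploiting the specific structure of the expansion of $F$ at $0$: the natural scaling $|F - F(0)|\lesssim|v|^{p}$ together with $\|\nabla F\|\sim|v|^{p-1}$ formally dictates the relation $1-\theta = (p-1)/p$, but upgrading this heuristic to a uniform inequality on a full neighborhood of $0$—not merely along rays avoiding the critical locus of $F_{p}$—requires invoking \L ojasiewicz's inequality on $F$ and exploiting the homogeneity of its leading term. A secondary, purely technical point is to realize $[g_{\infty}]_{1}$ cleanly as an open subset of a Banach space so that Chill's proposition, stated in the unconstrained Banach setting, applies without modification.
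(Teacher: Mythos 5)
Your overall strategy coincides with the paper's: the paper also proves this proposition by verifying the hypotheses of Chill's result (Proposition \ref{prop:chill}) with $\mathscr{W}=L^{2}(M,g_{\infty})$, checks (A) and (B1)--(B4) essentially word for word as you do, identifies the natural constraint with the Lyapunov--Schmidt manifold of Proposition \ref{prop:LSred}, and reduces hypothesis (C) to the finite-dimensional reduced functional $F(v)=\cY(\Psi(v))$, the integrable case being trivial with $\theta=\frac12$. One minor deviation: the paper does not build a chart for $[g_{\infty}]_{1}$ at all; it takes $\mathscr{B}=C^{2,\alpha}(M,g_{\infty})$ and a ball around $1$, works with the scale-invariant quotient form of $\cY$ so that \eqref{eq:Dy-L2} is its $L^{2}$-gradient there, and only at the end restricts the resulting inequality to unit-volume $u$ (replacing $\cY(u)$ by $r_{u^{N-2}g_{\infty}}$). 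Your slice-plus-rescaling chart can be made to work, but it is unnecessary and adds the small burden of comparing the constrained and unconstrained gradients.

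The genuine gap is precisely the point you flag and do not resolve: the exponent $\theta=\frac1p$ in the non-integrable case. The classical finite-dimensional \L ojasiewicz theorem only provides \emph{some} $\theta\in(0,\frac12]$, and your proposed repair---the scaling heuristic $|F-F(0)|\lesssim|v|^{p}$, $\Vert\nabla F\Vert\sim|v|^{p-1}$, combined with homogeneity of the leading form $F_{p}$---cannot be upgraded to a full neighborhood of $0$ when $\dim\Lambda_{0}>1$: the gradient of an analytic function whose expansion begins at order $p$ can degenerate along the zero set of $F_{p}$, so the optimal exponent can be strictly smaller than $\frac1p$. For instance $F(x,y)=x^{4}+y^{8}$ has leading order $p=4$, yet along $x=0$ the inequality $|F|^{1-\theta}\leq C\Vert\nabla F\Vert$ forces $\theta\leq\frac18$; hence no argument using only the order of vanishing and homogeneity of the leading term can yield $\theta=\frac1p$. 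The paper closes exactly this step by citing a specific finite-dimensional statement, \cite[Proposition 2.3(b)]{Chill:LS-ineq}, which supplies the exponent $\theta=\frac1p$ for the reduced functional $F$. To complete your proof you must either invoke that result (or an equivalent one with its hypotheses verified in this setting), or weaken the conclusion to an unspecified $\theta\in(0,\frac12]$---which is all your argument currently delivers, and incidentally all that is used in the proof of Theorem \ref{theo:gen-rate-conv}.
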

\begin{proof}
To verify this, we will show that the hypothesis of Proposition \ref{prop:chill} are satisfied for the Yamabe energy $\cY$. 
We work with the Banach spaces $\mathscr{B}:=
C^{2,\alpha}(M,g_{\infty})$ 
and $\mathscr{W} : = L^{2}(M,g_{\infty})$,
and fix $U$ a small enough ball around $1$ in $C^{2,\alpha}(M,g_{\infty})$ so that Proposition \ref{prop:LSred} is applicable in $U$. 

Hypothesis (A) is the statement that $\Lambda_{0} = \ker\cL_{\infty}$ is complemented in $C^{2,\alpha}(M,g_{\infty})$, which is immediate by the following argument. One first checks that the $L^{2}$-projection map $\proj_{\Lambda_{0}}$ restricts to a continuous map from $C^{2,\alpha}(M,g_{\infty})$ onto $\Lambda_{0}$ (since of course $C^{2,\alpha}(M,g_{\infty})\hookrightarrow L^{2}(M,g_{\infty})$ as a continuous embedding); from this, it follows (cf.\ \cite[p.\ 580]{Chill:LS-ineq}) that $\Lambda_{0}'$ is complemented (by the map $\proj_{\Lambda_{0}}'$) in the dual space $C^{2,\alpha}(M,g_{\infty})'$, and its complement $\Lambda_{0}'^{\perp}$ may be canonically identified with $(\Lambda_{0}^{\perp})'$.

Hypothesis (B) is satisfied as follows: consider the map
\begin{equation}\label{eq:L2C0}
\begin{split}
\mathscr{W} : =
 L^{2}(M,g_{\infty}) & \hookrightarrow C^{2,\alpha}(M,g_{\infty})'\\
f & \mapsto \left(\varphi\mapsto \int_{M} f\varphi \, dV_{g_{\infty}} \right).
\end{split}
\end{equation}
\begin{enumerate}[(B1)]
\item This map is continuous. 
\item The map $\proj_{\Lambda_{0}}'\in \cB(C^{2,\alpha}(M,g_{\infty})')$ leaves $L^{2}(M,g_{\infty})$ invariant (of course, here we are considering the composition $\proj_{\Lambda_{0}}:C^{2,\alpha}(M,g_{\infty})\to \Lambda_{0}\hookrightarrow C^{2,\alpha}(M,g_{\infty})$).
\item That $D \cY \in C^{1}(U,L^{2}(M,g_{\infty}))$ follows from the explicit form of $D \cY$ given above. 
\item Finally, we must verify that $\range\cL_{\infty} = \Lambda_{0}'^{\perp}\cap L^{2}(M,g)$. That $\range\cL_{\infty} \subseteq \Lambda_{0}'^{\perp}\cap L^{2}(M,g)$ is obvious because $\cL_{\infty}$ is formally self-adjoint on $L^{2}$. The other inclusion follows from the $L^{2}$-spectral decomposition of $\cL_{\infty}$. 
\end{enumerate}

Thus to prove the \L ojasiewicz--Simon inequality with exponent $\theta \in (0,\frac 12]$, it is enough to check (C), i.e., that the Yamabe energy restricted to the natural constraint satisfies the \L ojasiewicz--Simon inequality with exponent $\theta \in (0,\frac 12]$. Recall that in Proposion \ref{prop:LSred} we have defined $F(v) = \cY(\Psi(v))$. In the integrable case, clearly $F(v) \equiv F(0)$, so $F$ satisfies the  \L ojasiewicz--Simon inequality for $\theta = \frac 12$.

In general, $F$ is an analytic function whose power series has its first nonzero degree $p$, by definition. Thus, we may conclude (cf.\ \cite[Proposition 2.3(b)]{Chill:LS-ineq}) that $F$ satisfies the  \L ojasiewicz--Simon inequality with exponent $\theta =\frac{1}{p}$. 

The claim follows from this---we have replaced $\cY(u)$ with $r_{u^{N-2}g_{\infty}}$ in the left hand side of the inequality by using the assumption that $u^{N-2}g_{\infty}$ has unit volume. 
\end{proof}

Now, we show how the \L ojasiewicz--Simon inequality yields quantitative estimates on the rate of convergence of the Yamabe flow. 

\begin{proof}[Proof of Theorem \ref{theo:gen-rate-conv}]
We consider a Yamabe flow $g(t)=u(t)^{N-2}g_{\infty}$ which converges to $g_{\infty}$ in $C^{2}(M,g_{\infty})$ as $t\to\infty$. We may assume without loss of generality that $g_{\infty}$ and thus $g(t)$ have unit volume. In Proposition \ref{prop:LS-ineq} we have shown that there is a \L ojasiewicz--Simon inequality near $g_{\infty}$ for some $\theta \in (0,\frac 12]$. 
We emphasize that if we are regarding $D\cY(g(t))$ as an element of $L^{2}(M,g_{\infty})$, i.e.,
$D\cY(g(t))=2\left(R_{g(t)}-r_{g(t)}\right)u(t)^{N-1}$.

Choose $t_{0}$ so that for $t \geq t_{0}$, $\Vert u(t) - 1\Vert_{C^{0}(M,g_{\infty})} \leq \frac 12$. We then have that
\begin{align*}
\frac{d}{dt} \left( r_{g(t)} - r_{g_{\infty}}\right) &  = -\frac {2}{N-2}
 \int_{M} \left( R_{g(t)} - r_{g(t)}\right)^{2} u(t)^{N} dV_{g_{\infty}}\\
& \leq 
- c \int_{M} \left( R_{g(t)} - r_{g(t)}\right)^{2} u(t)^{2N-2} dV_{g_{\infty}}\\
& = - c \Vert D \cY(g(t))\Vert_{L^{2}(M,g_{\infty})}^{2}\\
& \leq -c (r_{g(t)} - r_{g_{\infty}})^{2-2\theta}.
\end{align*}
where $c >0$ is a constant depending only on $n$ and $g_{\infty}$ (that we let change from line to line). Let us first assume that the \L ojasiewicz--Simon inequality is satisfied with $\theta = \frac 12$, i.e., that we are in the integrable case. The previous inequality yields
$r_{g(t)} - r_{g_{\infty}} \leq C e^{-2\delta t},
$
for $\delta>0$ depending only on $n$ and $g_{\infty}$ and $C>0$ depending on $g(0)$ (chosen so that this actually holds for all $t\geq 0$). On the other hand, if the \L ojasiewicz--Simon inequality holds with $\theta \in (0,\frac 12)$ then the same argument shows that 
$r_{g(t)} - r_{g_{\infty}} \leq C(1+t)^{\frac{1}{2\theta -1}}.
$

Recall 
that the evolution equation for the conformal factor $u=u(t)$ is given by 
\begin{equation*}
\frac{\partial u}{\partial t}=-\frac{u}{N-2}\left(R_{g(t)}-r_{g(t)}\right).
\end{equation*}
Thus, exploiting the fact that the flow converges in $C^2$ we may use the \L ojasiewicz--Simon inequality to compute
\begin{align*}
\frac{d}{dt} \left( r_{g(t)}-r_{g_{\infty}}\right)^{\theta} & = \theta \left( r_{g(t)}-r_{g_{\infty}}\right)^{\theta-1} \frac{d}{dt} \left( r_{g(t)}-r_{g_{\infty}}\right)\\
& \leq - c\theta  \left( r_{g(t)}-r_{g_{\infty}}\right)^{\theta-1} \Vert D \cY(g(t))\Vert_{L^{2}(M,g_{\infty})}^{2}\\
& \leq -c\theta \Vert D \cY(g(t))\Vert_{L^{2}(M,g_{\infty})}\\
& \leq -c\theta \left \Vert \frac {\partial u}{\partial t} \right \Vert_{L^{2}(M,g_{\infty})}.
\end{align*}
Thus, if $\theta = \frac 12$ 
(recall $\lim_{t\to\infty}u(t)=1$),
\begin{align*}
\Vert u(t) -1 \Vert_{L^{2}(M,g_{\infty})} & \leq \int_{t}^{\infty}  \left \Vert \frac {\partial u}{\partial s} \right \Vert_{L^{2}(M,g_{\infty})} ds\\
& \leq - c\int_{t}^{\infty} \frac{d}{ds}
\left[ \left( r_{g(s)}-r_{g_{\infty}}\right)^{\frac 12}\right]ds\\
& = c \left( r_{g(t)}-r_{g_{\infty}}\right)^{\frac 12}
\leq C e^{-\delta t}.
\end{align*}
A similar computation if $\theta \in (0,\frac 12)$ yields
$
\Vert u(t) -1 \Vert_{L^{2}(M,g_{\infty})}  \leq C (1+t)^{-\frac{\theta}{1-2\theta }}.
$

To obtain $C^{2}$ estimates, we may interpolate between $L^{2}(M,g)$ and $W^{k,2}(M,g)$ for $k$ large enough: interpolation \cite[Theorem 6.4.5]{BerghLofstrom:interpolationSpaces} and Sobolev embedding yields some constant $\eta \in (0,1)$ so that
\begin{equation*}
\Vert u(t) -1 \Vert_{C^{2,\alpha}(M,g_{\infty})} \leq \Vert u(t) -1 \Vert_{L^{2}(M,g_{\infty})}^{\eta} \Vert u(t) -1 \Vert_{W^{k,2}(M,g_{\infty})}^{1-\eta}.
\end{equation*}
Because $u(t)$ is converging to $1$ in $C^{2,\alpha}$ (and thus in $C^{\infty}$ by parabolic Schauder estimates and bootstrapping), the second term is uniformly bounded. Thus, exponential (polynomial) decay of the $L^{2}$ norm give exponential (polynomial) decay of the $C^{2,\alpha}$ norm as well. 
\end{proof}

\begin{remark}
The assumption in Theorem \ref{theo:gen-rate-conv} that $u(t)$ converges in $C^{2,\alpha}$ to the constant function $1$ can be weakened to assuming merely that the Yamabe flow converges in $L^N(M,g_0)$. Indeed, it is possible to show that the latter already implies the flow has a smooth limit to which it converges in $C^{2,\alpha}$. The $L^N$ convergence is equivalent to saying that the flow converges in the Ebin $L^2$ metric on the space of Riemannian metrics, restricted to the conformal class \cite[\S4]{ClarkeR2011}.
\end{remark}


\section{Slowly converging Yamabe flows}\label{sec:slow-flow}

In this section, we show that given a non-integrable critical point $g_{\infty}$ which satisfies a particular hypothesis, then there exists a Yamabe flow $g(t)$ so that $g(t)$ converges to $g_{\infty}$ at exactly a polynomial rate. This shows that the conditions in Theorem \ref{theo:gen-rate-conv} are nearly sharp. We will do so by modifying the arguments
of Adams--Simon
\cite{AdamsSimon} to the parabolic setting 
(in \cite[\S 6]{AdamsSimon}, the authors remark that their results should extend to the parabolic setting, but this requires some serious work.
Moreover, the
Yamabe functional does not completely fit into 
their
framework 
because of the volume normalization term).

\subsection{Projecting the Yamabe flow with estimates}

The goal of this subsection is to obtain an equivalent
formulation on the Yamabe flow in terms of {\it two}
flows: one taking place on the finite-dimensional space
$\Lambda_0$ (the ``kernel-projected flow"), and the other on the infinite-dimensional
complement $\Lambda_0^\perp$ (the ``kernel orthogonal-projected flow"). The a priori estimates
of Proposition \ref{prop:slow-decay-rewrite-two-comp} make
this possible.

The next lemma will provide a function which approximately solves the Yamabe flow. The remaining parts of this section will be devoted to perturbing it to an exact solution of the flow. 
Here and in the sequel we will always use $f'(t)$ to denote the time derivative of a function $f(t)$. The constant $T$ should be thought of as a large, but fixed, parameter. Because none of the constants in the bounds that we will derive in Proposition \ref{prop:slow-decay-rewrite-two-comp} depend implicitly on $T$, we will be allowed to take $T$ large in the final step of the proof of Theorem \ref{theo:gen-slow-conv}.

\begin{lemm}\label{lemm:poly-decay-def-varphi}
Assume that $g_{\infty}$ satisfies $AS_{p}$ as defined in Definition \ref{defi:ASp}, i.e., $F_{p}|_{\SS^{k-1}}$ achieves a positive maximum for some point $\hat v$ in the unit sphere $\SS^{k-1}\subset \Lambda_{0}$. Then, for any $T\geq 0$ fixed, the function
\begin{equation}
\label{phitEq}
\varphi (t) :=\varphi(t,T) =  (T+t)^{-\frac{1}{p-2}} \left( \frac{2(N-2)}{p(p-2)F_{p}(\hat v) } \right)^{\frac{1}{p-2}} \hat v
\end{equation}
solves $2(N-2) \varphi' + D F_{p}(\varphi) =0$. 
\end{lemm}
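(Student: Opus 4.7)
The plan is to verify the ODE $2(N-2)\varphi' + DF_p(\varphi) = 0$ by a direct computation, exploiting two basic facts: homogeneity of $F_p$ and Euler's identity applied at the constrained maximum $\hat v$.

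First I would extract the scalar prefactor: write $\varphi(t) = \alpha (T+t)^{-\frac{1}{p-2}} \hat v$ where $\alpha = \bigl(\tfrac{2(N-2)}{p(p-2)F_p(\hat v)}\bigr)^{\frac{1}{p-2}}$. Differentiating in $t$ gives $\varphi'(t) = -\tfrac{\alpha}{p-2}(T+t)^{-\frac{p-1}{p-2}} \hat v$. Meanwhile, since $F_p$ is a homogeneous polynomial of degree $p$ on $\Lambda_0$, its gradient $DF_p$ is homogeneous of degree $p-1$, so
\begin{equation*}
DF_p(\varphi(t)) = \alpha^{p-1}(T+t)^{-\frac{p-1}{p-2}} DF_p(\hat v).
\end{equation*}
Thus the same factor $(T+t)^{-\frac{p-1}{p-2}}$ appears in both terms, and the ODE reduces to a purely algebraic identity in $\Lambda_0$.

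Next I would use the assumption $AS_p$ to pin down $DF_p(\hat v)$. Since $\hat v$ maximizes $F_p$ on the unit sphere $\SS^{k-1} \subset \Lambda_0$, the Lagrange multiplier condition gives $DF_p(\hat v) = \lambda \hat v$ for some $\lambda \in \RR$. Pairing with $\hat v$ and invoking Euler's identity for the degree-$p$ homogeneous $F_p$ yields $\lambda = \langle DF_p(\hat v), \hat v\rangle = p F_p(\hat v)$, so $DF_p(\hat v) = p F_p(\hat v)\hat v$. Note $F_p(\hat v)>0$ by the positivity hypothesis, so $\alpha$ is well-defined as a real positive number and the direction $\hat v$ is not lost.

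Finally, substituting these expressions into $2(N-2)\varphi' + DF_p(\varphi)$ and collecting the common factor $(T+t)^{-\frac{p-1}{p-2}}\hat v$, the equation becomes
\begin{equation*}
\Bigl(-\tfrac{2(N-2)\alpha}{p-2} + \alpha^{p-1} p F_p(\hat v)\Bigr)(T+t)^{-\frac{p-1}{p-2}}\hat v = 0,
\end{equation*}
which is equivalent to $\alpha^{p-2} = \tfrac{2(N-2)}{p(p-2)F_p(\hat v)}$. This is precisely the definition of $\alpha$, completing the verification. There is really no genuine obstacle here; the content of the lemma is just that the constant $\alpha$ in \eqref{phitEq} has been chosen to balance the two dimensional factors coming from the time derivative and from the degree-$p$ homogeneity of $DF_p$.
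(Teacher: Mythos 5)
Your argument is correct and matches the paper's proof in substance. The paper derives $DF_p(\lambda\hat v)=p\lambda^{p-1}F_p(\hat v)\hat v$ by writing $F_p$ in spherical coordinates as $r^p$ times a function on $\SS^{k-1}$ and noting that the angular part has vanishing gradient at $\hat v$; your Lagrange-multiplier plus Euler-identity phrasing is the same computation, just packaged a bit more cleanly.
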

\begin{proof}
Assume that $F_{p}|_{\SS^{k-1}}$ achieves a positive maximum at $\hat v$.
Then for any $\lambda \in \RR$,
\begin{equation}
D F_{p}(\lambda \hat v) = p |\lambda|^{p-1} F_{p}(\hat v) \hat v.
\end{equation}
The reason for this is that $F_{p}$ is $p$-homogeneous, so it is some function on $\SS^{k-1}$ times $r^{p}$. The $\SS^{k-1}$ part has zero gradient at $\hat v$ by assumption, so the gradient must be radial. The exact form follows from differentiating the $r^{p}$ part along with scaling. Thus, 
\begin{align*}
D F_{p}(\varphi) & = p (T+t)^{-1-\frac{1}{p-2}}  \left( \frac{2(N-2)}{p(p-2)F_{p}(\hat v) } \right)^{1+\frac{1}{p-2}} F_{p}(\hat v) \hat v\\
& = \frac{2(N-2)}{p-2} (T+t)^{-1} \varphi(t).
\end{align*}
Since
$
\varphi'(t)  = -\frac{1}{p-2} (T+t)^{-1}\varphi(t),
$
the claim follows. 
\end{proof}


In the next result and subsequently in this section we will always denote by $\Vert f(t) \Vert_{C^{k,\alpha}}$ the parabolic $C^{k,\alpha}$ norm on $(t,t+1)\times M$. 
More precisely, for $\alpha \in (0,1)$, we define the seminorm
\begin{equation*}
\vert f(t)\vert_{C^{0,\alpha}} = \sup_{\substack{(s_{i},x_{i})\in(t,t+1)\times M \\  (s_{1},x_{1}) \not = (s_{2},t_{2}) }} \frac{|f(s_{1},x_{1}) - f(s_{2},x_{2})|}{(d_{g_{\infty}}(x_{1},x_{2})^{2} + |t_{1}-t_{2}|)^{\frac \alpha 2}},
\end{equation*}
and for $k\geq 0$ and $\alpha \in (0,1)$, we define the norm
\begin{equation}\label{eq:hold-norm}
\Vert f(t) \Vert_{C^{k,\alpha}} = \sum_{|\beta| + 2j \leq k} \sup_{(t,t+1)\times M} \vert D^{\beta}_{x} D^{j}_{t}f\vert +  \sum_{|\beta| + 2j = k} \vert  D^{\beta}_{x} D^{j}_{t}f \vert_{C^{0,\alpha}},
\end{equation}
where the norm and derivatives in the sum are taken with respect to $g_{\infty}$. When we mean an alternative norm, we will \emph{always} indicate the domain. 

The reason that we have chosen these norms is that they will be needed to close the fixed-point iteration argument in Proposition \ref{prop:contract-map}; showing that a certain map is a contraction map will use parabolic Schauder estimates (shown in Lemma \ref{lemm:ker-orthog-prob}), which require the chosen norms. As such, we will use these norms throughout this section. 

First, we prove a preliminary lemma, which allows us to estimate the difference between $D\cY(u)$ 
and the term $u^{2-N}D\cY(u)$ that appears on the right hand side of the evolution 
equation \eqref{YFMainEq} for the conformal factor under Yamabe flow. This will allow us
to reduce this evolution equation to a gradient flow.
Intuitively, this is clear since $u$ is approximately $1$, but
the point is to show that the difference 
$(u^{2-N}-1)D\cY(u)$ is sufficiently small with respect to certain
weighted norms that will be used in the proof of the
contraction mapping argument (\S\ref{ConstructionSubSec}).

\begin{lemm}\label{lemm:DYu-est}
There exists $T_{0} >0$, $\epsilon_{0} >0$ and $c > 0$ all depending on $g_{\infty}$ and $\hat v$ so that the following holds: 
Fix $T > T_{0}$.
Then, for $\varphi(t)$ as in Lemma \ref{lemm:poly-decay-def-varphi} and $w\in C^{2,\alpha}(M\times[0,\infty))$, defining $u:= \Psi(\varphi+w^{\T}) + w^{\perp}$,
then the term
\begin{equation*}
E_{0}^{\T}(w) : = \proj_{\Lambda_{0}}( D\cY(u) u^{2-N} - D\cY(u))
\end{equation*}
satisfies
\begin{equation*}
\left\{
\begin{split}
& \Vert E_{0}^{\T}(w) \Vert_{C^{0,\alpha}} \leq c ( (T+t)^{-1-\frac {1}{p-2}}+\Vert w^{\T} \Vert_{C^{0,\alpha}}^{p-1} + \Vert w^{\perp}\Vert_{C^{2,\alpha}} ) ( (T+t)^{-\frac {1}{p-2}}+ \Vert w \Vert_{C^{2,\alpha}} )\\
& \Vert E_{0}^{\T}(w_{1}) - E_{0}^{\T}(w_{2}) \Vert_{C^{0,\alpha}} \leq c ((T+t)^{-1-\frac {1}{p-2}}+\Vert w_{1}^{\T} \Vert_{C^{0,\alpha}}^{p-1}  + \Vert w_{2}^{\T} \Vert_{C^{0,\alpha}}^{p-1} +  \Vert w_{1}^{\perp}\Vert_{C^{2,\alpha}} + \Vert w_{2}^{\perp} \Vert_{C^{2,\alpha}} )\\
 & \qquad\qquad\qquad\qquad\qquad\qquad\qquad\qquad\qquad\qquad\qquad\qquad\qquad\qquad\qquad \times \Vert w_{1}-w_{2} \Vert_{C^{2,\alpha}}\\
&{ \qquad \qquad\qquad \qquad \qquad \qquad + c ((T+t)^{-\frac {1}{p-2}}+ \Vert w_{1} \Vert_{C^{2,\alpha}} + \Vert w_{2} \Vert_{C^{2,\alpha}} )(\Vert w_{1}^{\T} \Vert_{C^{0,\alpha}}^{p-2} + \Vert w_{2}^{\T} \Vert_{C^{0,\alpha}}^{p-2} )}\\
& \qquad\qquad\qquad\qquad\qquad\qquad\qquad\qquad\qquad\qquad\qquad\qquad\qquad\qquad\qquad \times \Vert w_{1}^{\T} - w_{2}^{\T}\Vert_{C^{0,\alpha}}\\
& \qquad \qquad\qquad \qquad \qquad \qquad + c ((T+t)^{-\frac {1}{p-2}}+ \Vert w_{1} \Vert_{C^{2,\alpha}} + \Vert w_{2} \Vert_{C^{2,\alpha}} ) \Vert w_{1}^{\perp} - w_{2}^{\perp}\Vert_{C^{2,\alpha}}
\end{split}
\right.
\end{equation*}
Identical estimates hold for $E_{0}^{\perp}(w) : = \proj_{\Lambda_{0}^{\perp}}( D\cY(u) u^{2-N} - D\cY(u))$. Here, we are using the parabolic H\"older norms on $(t,t+1)\times M$ as defined above; the bounds hold for each $t\geq0$ fixed, with the constants independent of $T$ and $t$.
\end{lemm}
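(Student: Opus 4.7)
The plan is to exploit the factorization $D\cY(u) u^{2-N} - D\cY(u) = (u^{2-N} - 1) D\cY(u)$ and estimate each factor separately, then combine via a product estimate in the parabolic $C^{0,\alpha}$ norm. Setting $v := \varphi + w^{\T}$ and $u = \Psi(v) + w^{\perp}$, the identity $\Psi(v) = 1 + v + \Phi(v)$ with $D\Phi(0) = 0$ (from Proposition~\ref{prop:LSred}) yields $\|u - 1\|_{C^{2,\alpha}} \leq c((T+t)^{-1/(p-2)} + \|w\|_{C^{2,\alpha}})$ once $T$ is large and $\|w\|$ is small. Since $s \mapsto s^{2-N} - 1$ is smooth and vanishes at $s=1$, composition gives the same bound on $\|u^{2-N} - 1\|_{C^{0,\alpha}}$, which is precisely the second factor appearing in the claimed estimate.

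To bound the first factor, Taylor expand $D\cY$ at $\Psi(v)$ in the $w^{\perp}$ variable:
\begin{equation*}
D\cY(u) = D\cY(\Psi(v)) + \int_0^1 D^2\cY(\Psi(v) + s w^{\perp})[w^{\perp}]\, ds.
\end{equation*}
By Proposition~\ref{prop:LSred}, $D\cY(\Psi(v)) = DF(v)$, and the Adams--Simon expansion $F(v) = F(0) + \sum_{j\geq p} F_j(v)$ on the finite-dimensional space $\Lambda_0$ yields $|DF(v)| \leq c|v|^{p-1}$, which is bounded by $c((T+t)^{-(p-1)/(p-2)} + \|w^{\T}\|_{C^{0,\alpha}}^{p-1})$ after splitting $v = \varphi + w^{\T}$. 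The Hessian integral is controlled in $C^{0,\alpha}$ by $c\|w^{\perp}\|_{C^{2,\alpha}}$ because $D^2\cY$ is a perturbation of the elliptic operator $-2(N-2)\cL_{\infty}$, which is bounded from $C^{2,\alpha}$ to $C^{0,\alpha}$. Multiplying the two factors and using that $\proj_{\Lambda_0}$ is bounded on $C^{0,\alpha}$ (as $\Lambda_0$ is finite-dimensional and spanned by smooth functions) gives the first estimate; the identical argument with $\proj_{\Lambda_0^{\perp}} = \mathrm{Id} - \proj_{\Lambda_0}$ handles $E_0^{\perp}$.

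For the Lipschitz bound, decompose
\begin{equation*}
(u_1^{2-N} - 1) D\cY(u_1) - (u_2^{2-N} - 1) D\cY(u_2) = (u_1^{2-N} - u_2^{2-N}) D\cY(u_1) + (u_2^{2-N} - 1)(D\cY(u_1) - D\cY(u_2)).
\end{equation*}
The first summand, combined with $\|u_1^{2-N} - u_2^{2-N}\|_{C^{0,\alpha}} \leq c\|w_1 - w_2\|_{C^{2,\alpha}}$ and the previous bound on $\|D\cY(u_1)\|$, produces the first block of the claim. For the second summand, further split $D\cY(u_1) - D\cY(u_2) = [DF(v_1) - DF(v_2)] + [\text{difference of Hessian integrals}]$. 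The first bracket is handled by the mean value theorem on $\Lambda_0$, yielding $|DF(v_1) - DF(v_2)| \leq c(|v_1|^{p-2} + |v_2|^{p-2})|v_1 - v_2|$, which when multiplied by $\|u_2^{2-N} - 1\|_{C^{0,\alpha}}$ produces the second block (the $|\varphi|^{p-2} \sim (T+t)^{-1}$ contribution is absorbed into the first block since $\|w^{\T}_1 - w^{\T}_2\|_{C^{0,\alpha}} \leq \|w_1 - w_2\|_{C^{2,\alpha}}$). The Hessian-integral difference splits as $D^2\cY(z_1)[w_1^{\perp} - w_2^{\perp}] + (D^2\cY(z_1) - D^2\cY(z_2))[w_2^{\perp}]$ with $z_i := \Psi(v_i) + s w_i^{\perp}$: the first piece yields the third block directly, and the second piece is estimated via the $D^3\cY$ bounds from Appendix~\ref{sec:app} and absorbed into the existing blocks thanks to its extra factor of $\|w_2^{\perp}\|$.

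The main technical obstacle is not conceptual but bookkeeping: one must carefully track which power of smallness among $|\varphi|^k$, $\|w^{\T}\|^k$, and $\|w^{\perp}\|$ combines with which, so that the right-hand side emerges in exactly the stated form. This precise structure is dictated downstream by the contraction map argument of Proposition~\ref{prop:contract-map}, which uses two distinct weighted norms on $\Lambda_0$ and $\Lambda_0^{\perp}$ simultaneously, so the product structure obtained here must align exactly with those weights. A minor regularity point is that the parabolic H\"older norms include a time derivative whereas $D\cY, D^2\cY, D^3\cY$ act only in the spatial variables; this causes no complication because the parabolic $C^{2,\alpha}$ norm dominates the spatial $C^{2,\alpha}$ norm on each time slice, and time differentiation of the products is handled by the same product-and-chain-rule estimates already in use.
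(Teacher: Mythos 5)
Your proposal is correct and follows essentially the same route as the paper: both factorize $E_0(w) = (u^{2-N}-1)D\cY(u)$, bound the first factor via the mean-value/Taylor form since it vanishes at $u=1$, Taylor-expand $D\cY(u)$ around $\Psi(\varphi+w^{\T})$ in the $w^{\perp}$ direction, use the Lyapunov--Schmidt identity $D\cY(\Psi(v))=DF(v)$ together with $DF$ vanishing to order $p-1$, control the second-order integral remainder via the $D^3\cY$ bounds of Appendix~\ref{sec:app}, and transfer to $E_0^{\T},E_0^{\perp}$ via boundedness of $\proj_{\Lambda_0}$. Your Lipschitz decomposition into products of differences is also the standard elaboration of the paper's ``follows similarly.''
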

\begin{proof}
First,
\begin{align*}
 u^{2-N}& = 1 + \int_{0}^{1}\frac{d}{ds}\Bigg[ \Big(  1 + s\underbrace{(\varphi + w^{\T}+ \Phi(\varphi + w^{\T}) + w^{\perp}))}_{:=\phi}\Big)^{2-N} \Bigg]ds\\
& = 1 + (2-N) \int_{0}^{1} \left(  1 + s\phi \right)^{1-N}\phi ds.
\end{align*}
So, letting $E_{0}(w) : = D\cY(u)u^{2-N} - D\cY(u)$, we have that 
\begin{equation}\label{eq:E0-bounds}\begin{split}
 \Vert E_{0}(w) \Vert_{C^{0,\alpha}} & \leq c \Vert D\cY(u)\Vert_{C^{0,\alpha}}( \Vert \varphi \Vert_{C^{0,\alpha}} + \Vert w^{\T}\Vert_{C^{0,\alpha }}+ \Vert \Phi(\varphi + w^{\T})\Vert_{C^{0,\alpha}} + \Vert w^{\perp}\Vert_{C^{0,\alpha}} )\\
 & \leq c \Vert D\cY(u) \Vert_{C^{0,\alpha}}( (T+t)^{-\frac {1}{p-2}}+ \Vert w^{\T}\Vert_{C^{0,\alpha }} + \Vert w^{\perp}\Vert_{C^{0,\alpha}} )
\end{split}\end{equation}
We have used the fact that $\Phi(0)=0$ and $\Phi: \Lambda_{0}\to C^{2,\alpha}(M,g_{\infty})$ is a differentiable map. 
Taylor's theorem shows that for $\psi_{s,r} : = 1 +r( \varphi + w^{\T} + \Phi(\varphi + w^{\T}) + sw^{\perp})$,
\begin{align*}
D\cY(u) & = D\cY(\Psi(\varphi+w^{\T})) + \int_{0}^{1} D^{2}\cY(\psi_{s,1}) [w^{\perp}] ds\\
& = D\cY(\Psi(\varphi+w^{\T})) - 2(N-2) \cL_{\infty} w^{\perp} \\
& \qquad + \int_{0}^{1}\int_{0}^{s} D^{3}\cY(\psi_{s,\tilde s})[w^{\perp}, \varphi + w^{\T} + \Phi(\varphi + w^{\T}) + sw^{\perp} ] d\tilde s ds\\
& = \proj_{\Lambda_{0}} D\cY(\Psi(\varphi+w^{\T})) - 2(N-2) \cL_{\infty} w^{\perp} \\
& \qquad + \int_{0}^{1}\int_{0}^{s} D^{3}\cY(\psi_{s,\tilde s})[w^{\perp}, \varphi + w^{\T} + \Phi(\varphi + w^{\T}) + sw^{\perp} ] d\tilde s ds\\
& = DF(\varphi+w^{\T}) - 2(N-2) \cL_{\infty} w^{\perp} \\
& \qquad + \int_{0}^{1}\int_{0}^{s} D^{3}\cY(\psi_{s,\tilde s})[w^{\perp}, \varphi + w^{\T} + \Phi(\varphi + w^{\T}) + sw^{\perp} ] d\tilde s ds.
\end{align*}
In the last line, we used the bound \eqref{eq:app-D3bds} on $D^{3}\cY$ discussed in Appendix \ref{sec:app}. Now, observe that $DF(0) = D^{2}F(0) = \dots = D^{p-1}F(0)=0$, by definition of $p$, the order of integrability. As such, Taylor's theorem shows that
\begin{equation*}
\Vert DF(\varphi+w^{\T}) \Vert_{C^{0,\alpha}} \leq c \Vert \varphi + w^{\perp}\Vert_{C^{0,\alpha}}^{p-1} \leq c((T+t)^{-1-\frac{1}{p-2}} +\Vert w^{\T} \Vert_{C^{0,\alpha}}^{p-1}). 
\end{equation*}
Bounding the other two terms in the above expression for $D\cY(u)$ 
(using the bound \eqref{eq:app-D3bds} on $D^3\cY$ discussed in Appendix A), we have that
%
\begin{equation}\label{DYAPrioriEstEq}
\Vert D\cY(u)\Vert_{C^{0,\alpha}}\leq c((T+t)^{-1-\frac{1}{p-2}} +\Vert w^{\T} \Vert_{C^{0,\alpha}}^{p-1} + \Vert w^{\perp}\Vert_{C^{2,\alpha}}). 
\end{equation}
We define
\begin{equation*}
E_{0}^{\T}(w) : = \proj_{\Lambda_{0}} E_{0}(w), \qquad E_{0}^{\perp}(w) : = \proj_{\Lambda_{0}^{\perp}} E_{0}(w).
\end{equation*}
The asserted bounds for $E_0^{\T}(w)$  follow from the bound \eqref{eq:E0-bounds}
on $E_0(w)$, 
the estimate \eqref{DYAPrioriEstEq}, and the continuity of
the map $\proj_{\Lambda_{0}}:C^{0,\alpha}(M,g_{\infty}) \to \Lambda_{0}$:
\begin{align*}
\Vert \proj_{\Lambda_{0}} f\Vert_{C^{0,\alpha}(M,g_{\infty})} & \leq c \Vert \proj_{\Lambda_{0}} f\Vert_{L^{2}(M,g_{\infty})} \\
& \leq c  \Vert  f\Vert_{L^{2}(M,g_{\infty})}\\
& \leq c\Vert f \Vert_{C^{0,\alpha}(M,g_{\infty})},
\end{align*} 
where the first inequality follows because of the finite dimensionality of $\Lambda_{0}$. Note that this is a spacial bound bound, so it does not include the $t$-H\"older norm, but the desired space-time norm bound follows easily from the bound: if $f$ is time dependent,
\begin{align*}
\Vert (\proj_{\Lambda_{0}} f)(t_{1}) - (\proj_{\Lambda_{0}} f)(t_{2})\Vert_{C^{0,\alpha}(M,g_{\infty})}  & = \Vert \proj_{\Lambda_{0}}( f(t_{1}) - f(t_{2})) \Vert_{C^{0,\alpha}(M,g_{\infty})}\\
& \leq c \Vert  f(t_{1}) - f(t_{2}) \Vert_{C^{0,\alpha}(M,g_{\infty})}
\end{align*}
Dividing by $|t_{1}-t_{2}|^{\frac{\alpha}{2}}$ and taking the supremum over all such $t_{1},t_{2} \in (t,t+1)$, the asserted bound follows. The bound for $E_{0}^{\T}(w_{1})-E^{\T}_{0}(w_{2})$ 
follows similarly. 
This, combined with the estimate
on $E_0(w)$  \eqref{eq:E0-bounds} together with the estimate \eqref{DYAPrioriEstEq},
implies by the triangle inequality 
that the identical estimates hold also for $E_{0}^{\perp}(w)$. 
\end{proof}

The next result reduces the Yamabe flow to two flows, one
on $\Lambda_0$ and the other on $\Lambda_0^\perp$.

\begin{prop}\label{prop:slow-decay-rewrite-two-comp}
There exists $T_{0} >0$, $\epsilon_{0} >0$ and $c > 0$ all depending on $g_{\infty}$ and $\hat v$ so that the following holds: 
Fix $T > T_{0}$.
Then, for $\varphi(t)$ as in Lemma \ref{lemm:poly-decay-def-varphi} and $w\in C^{2,\alpha}(M\times[0,\infty))$, there are functions $E^{\T}(w)$ and $E^{\perp}(w)$ so that $u:= \Psi(\varphi+w^{\T}) + w^{\perp}$ is a solution to the Yamabe flow if and only if 
\begin{align}
\label{kpfEq} 2(N-2)(w^{\T})' + D^{2} F_{p}(\varphi) w^{\T} & = E^{\T}(w)\\
\label{kopfEq} (w^{\perp}) ' - \cL_{\infty}w^{\perp} & = E^{\perp}(w).
\end{align}
Here, as long as $\Vert w\Vert_{C^{2,\alpha}} \leq \epsilon_{0}$, the error terms $E^{\T}$ and $E^{\perp}$ satisfy 
\begin{equation*}
\left\{
\begin{split}
& \Vert E^{\T}(w)\Vert _{C^{0,\alpha}} \leq    c ( (T+t)^{-1-\frac {1}{p-2}}+\Vert w^{\T} \Vert_{C^{0,\alpha}}^{p-1} + \Vert w^{\perp}\Vert_{C^{2,\alpha}} ) ( (T+t)^{-\frac {1}{p-2}}+ \Vert w \Vert_{C^{2,\alpha}} ) \\
&  \qquad \qquad\qquad \qquad \qquad \qquad  + c(T+t)^{-\frac{p}{p-2}} + c  (T+t)^{-\frac{p-1}{p-2}} \Vert w^{\T}\Vert _{C^{0,\alpha}}  + c(T+t)^{-\frac{p-3}{p-2}}\Vert w^{\T}\Vert ^{2}_{C^{0,\alpha}} \\
&  \qquad \qquad\qquad \qquad \qquad \qquad  + c\Vert w^{\T}\Vert _{C^{0,\alpha}}^{p-1}   + c ((T+t)^{-\frac{1}{p-2}} + \Vert w \Vert_{C^{2,\alpha}})\Vert w^{\perp}\Vert _{C^{2,\alpha}} \\
& \Vert E^{\T}(w_{1}) - E^{\T}(w_{2})\Vert _{C^{0,\alpha}} \leq c ((T+t)^{-1-\frac {1}{p-2}}+\Vert w_{1}^{\T} \Vert_{C^{0,\alpha}}^{p-1}  + \Vert w_{2}^{\T} \Vert_{C^{0,\alpha}}^{p-1} +  \Vert w_{1}^{\perp}\Vert_{C^{2,\alpha}} + \Vert w_{2}^{\perp} \Vert_{C^{2,\alpha}} )\\
 & \qquad\qquad\qquad\qquad\qquad\qquad\qquad\qquad\qquad\qquad\qquad\qquad\qquad\qquad\qquad \times \Vert w_{1}-w_{2} \Vert_{C^{2,\alpha}}\\
&{ \qquad \qquad\qquad \qquad \qquad \qquad + c ((T+t)^{-\frac {1}{p-2}}+ \Vert w_{1} \Vert_{C^{2,\alpha}} + \Vert w_{2} \Vert_{C^{2,\alpha}} )(\Vert w_{1}^{\T} \Vert_{C^{0,\alpha}}^{p-2} + \Vert w_{2}^{\T} \Vert_{C^{0,\alpha}}^{p-2} )}\\
& \qquad\qquad\qquad\qquad\qquad\qquad\qquad\qquad\qquad\qquad\qquad\qquad\qquad\qquad\qquad \times \Vert w_{1}^{\T} - w_{1}^{\T}\Vert_{C^{0,\alpha}}
& \qquad \qquad\qquad \qquad \qquad \qquad + c ((T+t)^{-\frac {1}{p-2}}+ \Vert w_{1} \Vert_{C^{2,\alpha}} + \Vert w_{2} \Vert_{C^{2,\alpha}} )(\Vert w_{1}^{\T} \Vert_{C^{0,\alpha}}^{p-2} + \Vert w_{2}^{\T} \Vert_{C^{0,\alpha}}^{p-2} )\\
& \qquad \qquad\qquad \qquad \qquad \qquad + c ((T+t)^{-\frac {1}{p-2}}+ \Vert w_{1} \Vert_{C^{2,\alpha}} + \Vert w_{2} \Vert_{C^{2,\alpha}} ) \Vert w_{1}^{\perp} - w_{1}^{\perp}\Vert_{C^{2,\alpha}}
& \qquad \qquad\qquad\qquad \qquad \qquad + c ( \Vert w_{1}^{}\Vert _{C^{2,\alpha}} + \Vert w_{2}^{}\Vert _{C^{2,\alpha}}) \Vert w_{1}^{\perp} - w^{\perp}_{2}\Vert _{C^{2,\alpha}}\\
& \qquad \qquad\qquad\qquad \qquad \qquad + c\big((T+t)^{-\frac{p-3}{p-2}} (\Vert w_{1}^{\T}\Vert _{C^{0,\alpha}} +\Vert w_{2}^{\T}\Vert _{C^{0,\alpha}}) + \Vert w_{1}^{\T}\Vert _{C^{0,\alpha}}^{p-2} + \Vert w_{2}^{\T}\Vert _{C^{0,\alpha}}^{p-2} \big)\\
& \qquad\qquad\qquad\qquad\qquad\qquad\qquad\qquad\qquad\qquad\qquad\qquad\qquad\qquad\qquad \times \Vert w^{\T}_{1}-w^{\T}_{2}\Vert _{C^{0,\alpha}}\\
& \qquad \qquad\qquad\qquad \qquad \qquad +  c (T+t)^{-\frac{p-1}{p-2}} \Vert w^{\T}_{1} - w^{\T}_{2}\Vert _{C^{0,\alpha}}.
\end{split}
\right.
\end{equation*}
and
\begin{equation*}
\left\{
\begin{split}
& \Vert E^{\perp}(w)\Vert _{C^{0,\alpha}}  \leq  c ( (T+t)^{-1-\frac {1}{p-2}}+\Vert w^{\T} \Vert_{C^{0,\alpha}}^{p-1} + \Vert w^{\perp}\Vert_{C^{2,\alpha}} ) ( (T+t)^{-\frac {1}{p-2}}+ \Vert w \Vert_{C^{2,\alpha}} )\\
& \qquad \qquad \qquad \qquad \qquad\qquad + c ((T+t)^{-\frac{1}{p-2}} + \Vert w\Vert _{C^{2,\alpha}}) \Vert w^{\perp}\Vert _{C^{2,\alpha}} \\
& \qquad \qquad \qquad \qquad \qquad\qquad + c((T+t)^{-\frac{1}{p-2}} + \Vert w\Vert _{C^{2,\alpha}})((T+t)^{-1-\frac{1}{p-2}} + \Vert w'\Vert_{C^{0,\alpha}})\\
& \Vert E^{\perp}(w_{1}) - E^{\perp}(w_{2})\Vert _{C^{0,\alpha}}  \leq c ((T+t)^{-1-\frac {1}{p-2}}+\Vert w_{1}^{\T} \Vert_{C^{0,\alpha}}^{p-1}  + \Vert w_{2}^{\T} \Vert_{C^{0,\alpha}}^{p-1} +  \Vert w_{1}^{\perp}\Vert_{C^{2,\alpha}} + \Vert w_{2}^{\perp} \Vert_{C^{2,\alpha}} )\\
 & \qquad\qquad\qquad\qquad\qquad\qquad\qquad\qquad\qquad\qquad\qquad\qquad\qquad\qquad\qquad \times \Vert w_{1}-w_{2} \Vert_{C^{2,\alpha}}\\
&{ \qquad \qquad\qquad \qquad \qquad \qquad + c ((T+t)^{-\frac {1}{p-2}}+ \Vert w_{1} \Vert_{C^{2,\alpha}} + \Vert w_{2} \Vert_{C^{2,\alpha}} )(\Vert w_{1}^{\T} \Vert_{C^{0,\alpha}}^{p-2} + \Vert w_{2}^{\T} \Vert_{C^{0,\alpha}}^{p-2} )}\\
& \qquad\qquad\qquad\qquad\qquad\qquad\qquad\qquad\qquad\qquad\qquad\qquad\qquad\qquad\qquad \times \Vert w_{1}^{\T} - w_{1}^{\T}\Vert_{C^{0,\alpha}}
& \qquad \qquad\qquad \qquad \qquad \qquad + c ((T+t)^{-\frac {1}{p-2}}+ \Vert w_{1} \Vert_{C^{2,\alpha}} + \Vert w_{2} \Vert_{C^{2,\alpha}} )(\Vert w_{1}^{\T} \Vert_{C^{0,\alpha}}^{p-2} + \Vert w_{2}^{\T} \Vert_{C^{0,\alpha}}^{p-2} )\\
& \qquad \qquad\qquad \qquad \qquad \qquad + c ((T+t)^{-\frac {1}{p-2}}+ \Vert w_{1} \Vert_{C^{2,\alpha}} + \Vert w_{2} \Vert_{C^{2,\alpha}} ) \Vert w_{1}^{\perp} - w_{1}^{\perp}\Vert_{C^{2,\alpha}}
& \qquad \qquad \qquad \qquad \qquad\qquad + c(\Vert w_{1}\Vert _{C^{2,\alpha}} + \Vert w_{2}\Vert _{C^{2,\alpha}}) \Vert w_{1}^{\perp} - w_{2}^{\perp}\Vert _{C^{2,\alpha}} \\
& \qquad \qquad \qquad \qquad \qquad\qquad + c((T+t)^{-\frac{1}{p-2}} + \Vert w_{1}\Vert _{C^{2,\alpha}} + \Vert w_{2}\Vert _{C^{2,\alpha}}) \Vert w_{1}' - w_{2}'\Vert _{C^{0,\alpha}} \\
& \qquad \qquad \qquad \qquad \qquad\qquad + c((T+t)^{-1-\frac{1}{p-2}} + \Vert w_{1}'\Vert _{C^{0,\alpha}} + \Vert w_{2}'\Vert _{C^{0,\alpha}})\Vert w_{1}-w_{2}\Vert _{C^{2,\alpha}}.
\end{split}
\right.
\end{equation*}
Here, we are using the parabolic H\"older norms on $(t,t+1)\times M$ as defined above; the bounds hold for each $t\geq0$ fixed, with the constants independent of $T$ and $t$.
\end{prop}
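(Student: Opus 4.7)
The plan is to rewrite the Yamabe flow as a gradient-type equation for the conformal factor, substitute the ansatz $u := \Psi(\varphi + w^{\T}) + w^{\perp}$, project the resulting identity onto $\Lambda_{0}$ and $\Lambda_{0}^{\perp}$ separately, and then isolate the genuinely nonlinear pieces as error terms. Since $\frac{1}{2}D\cY(u) = u^{N-1}(R_{g} - r_{g})$ with $g = u^{N-2}g_{\infty}$, the volume-normalized Yamabe flow is equivalent to $2(N-2)u' + D\cY(u) = -E_{0}(w)$, where $E_{0}(w) = u^{2-N}D\cY(u) - D\cY(u)$ is the quantity already controlled in Lemma \ref{lemm:DYu-est}. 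Projecting onto $\Lambda_{0}^{\perp}$ and using that $\proj_{\Lambda_{0}^{\perp}}u' = D\Phi(\varphi+w^{\T})[(\varphi+w^{\T})'] + (w^{\perp})'$ (since $(w^{\T})' \in \Lambda_{0}$), Taylor expanding $D\cY(u) = D\cY(\Psi(\varphi+w^{\T})) + D^{2}\cY(g_{\infty})[w^{\perp}] + R_{2}(w)$, and invoking $D^{2}\cY(g_{\infty})[w^{\perp}] = -2(N-2)\cL_{\infty}w^{\perp}$ together with the Lyapunov--Schmidt identity $\proj_{\Lambda_{0}^{\perp}}D\cY(\Psi(\varphi+w^{\T})) = 0$ from Proposition \ref{prop:LSred}, I obtain \eqref{kopfEq} with an explicit formula for $E^{\perp}(w)$ in terms of $E_{0}^{\perp}(w)$, $D\Phi(\varphi + w^{\T})[(\varphi + w^{\T})']$, and $\proj_{\Lambda_{0}^{\perp}}R_{2}(w)$. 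Analogously, for the $\Lambda_{0}$-projection, $\proj_{\Lambda_{0}}u' = \varphi' + (w^{\T})'$ and $\proj_{\Lambda_{0}}D\cY(\Psi(\varphi+w^{\T})) = DF(\varphi+w^{\T})$; Taylor-expanding $F$ about $0$ as $DF(\varphi+w^{\T}) = DF_{p}(\varphi) + D^{2}F_{p}(\varphi)w^{\T} + R_{F}(w^{\T})$ and using the cancellation $2(N-2)\varphi' + DF_{p}(\varphi) = 0$ from Lemma \ref{lemm:poly-decay-def-varphi}, the two leading terms cancel, yielding \eqref{kpfEq} with $E^{\T}(w)$ equal to $-E_{0}^{\T}(w)$ minus the projected higher-order remainders.

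The estimates then follow by bounding each summand of $E^{\T}$ and $E^{\perp}$ separately. The $E_{0}^{\T}$ and $E_{0}^{\perp}$ contributions are handled directly by Lemma \ref{lemm:DYu-est}. The term $D\Phi(\varphi+w^{\T})[(\varphi+w^{\T})']$ in $E^{\perp}$ is controlled via \eqref{eq:LS-est-Dphi} combined with $\Phi(0)=D\Phi(0)=0$, which gives $\Vert D\Phi(\varphi+w^{\T}) \Vert \leq C\Vert\varphi+w^{\T}\Vert$ and hence a bound of order $((T+t)^{-1/(p-2)} + \Vert w\Vert_{C^{2,\alpha}}) \cdot ((T+t)^{-1-1/(p-2)} + \Vert w'\Vert_{C^{0,\alpha}})$; this is precisely the source of the $w'$-dependent terms in the $E^{\perp}$ bound. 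The remainder $R_{2}(w)$ is a second-order integral remainder involving $D^{2}\cY - D^{2}\cY(g_{\infty})$ applied to $w^{\perp}$, hence bounded via the estimate \eqref{eq:app-D3bds} on $D^{3}\cY$ from Appendix \ref{sec:app} by $C\Vert w^{\perp}\Vert_{C^{2,\alpha}}((T+t)^{-1/(p-2)} + \Vert w\Vert_{C^{2,\alpha}})$. Lastly, $R_{F}(w^{\T})$ is controlled by Taylor's theorem applied to the analytic expansion $F(v) = F(0) + \sum_{j\geq p} F_{j}(v)$, yielding $\Vert R_{F}(w^{\T})\Vert_{C^{0,\alpha}} \leq C(\Vert\varphi\Vert^{p-2} + \Vert w^{\T}\Vert^{p-2})\Vert w^{\T}\Vert^{2} + C\Vert \varphi+w^{\T}\Vert^{p}$, which unpacks into the $(T+t)^{-p/(p-2)}$, $(T+t)^{-(p-1)/(p-2)}\Vert w^{\T}\Vert$, $(T+t)^{-(p-3)/(p-2)}\Vert w^{\T}\Vert^{2}$, and $\Vert w^{\T}\Vert^{p-1}$ terms listed in the statement. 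The $C^{0,\alpha}$-continuity of the finite-dimensional projector $\proj_{\Lambda_{0}}$ already exploited in Lemma \ref{lemm:DYu-est} splits these estimates cleanly between the $\Lambda_{0}$ and $\Lambda_{0}^{\perp}$ pieces. The Lipschitz-type bounds for $E^{\T}(w_{1}) - E^{\T}(w_{2})$ and $E^{\perp}(w_{1}) - E^{\perp}(w_{2})$ follow by applying the same analysis to pairs, systematically splitting each multilinear term $B(a,b)-B(a',b') = B(a-a',b) + B(a',b-b')$ to distribute the difference over the various factors.

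The main obstacle is not any single conceptual step but the bookkeeping: each bound has many terms of different orders in $(T+t)^{-1/(p-2)}$, $\Vert w^{\T}\Vert$, $\Vert w^{\perp}\Vert$, and $\Vert w'\Vert$, and the final form must match exactly what is required by the contraction mapping argument in Proposition \ref{prop:contract-map}, since any term off by a single power of $(T+t)^{-1/(p-2)}$ would break the contraction. The cancellation $2(N-2)\varphi' + DF_{p}(\varphi) = 0$ is essential: without it, $E^{\T}$ would contain an order-$(T+t)^{-(p-1)/(p-2)}$ term lacking any smallness factor, and the scheme would fail. A secondary but important point is that $w'$-dependent quantities must appear only in $E^{\perp}$ (where parabolic Schauder theory in Step 3 can absorb them) and never in $E^{\T}$ (which is handled only by a pointwise-in-time ODE argument in Step 2); this follows automatically from $\proj_{\Lambda_{0}}(w^{\perp})' = 0$, but must be carefully verified when handling the $D\Phi$ contribution.
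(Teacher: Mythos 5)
Your proposal follows essentially the same route as the paper: you reduce to a gradient flow via the quantity $E_{0}(w) = (u^{2-N}-1)D\cY(u)$ controlled in Lemma \ref{lemm:DYu-est}, project onto $\Lambda_{0}$ and $\Lambda_{0}^{\perp}$, use the Lyapunov--Schmidt identities $\proj_{\Lambda_0}D\cY(\Psi(v))=DF(v)$ and $\proj_{\Lambda_0^\perp}D\cY(\Psi(v))=0$, expand $DF$ with the cancellation $2(N-2)\varphi'+DF_p(\varphi)=0$, and treat $\Phi(\varphi+w^{\T})'$ as an error on the orthogonal side; this is precisely the paper's decomposition (your $\proj_{\Lambda_0}R_2$ is the paper's $E_1^{\T}$, and your single remainder $R_F$ combines the paper's $E_2^{\T}$ and $E_3^{\T}$). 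The only slip is in the stated form of the $R_F$ bound, where the prefactor should be $(\Vert\varphi\Vert^{p-3}+\Vert w^{\T}\Vert^{p-3})\Vert w^{\T}\Vert^2$ rather than $p-2$ in the exponent — as your own unpacking into $(T+t)^{-(p-3)/(p-2)}\Vert w^{\T}\Vert^2$ and $\Vert w^{\T}\Vert^{p-1}$ correctly indicates.
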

\begin{proof}
Recall that $u$ is a solution to the Yamabe flow if and only if 
\begin{equation}\label{YFMainEq}
(N-2) \frac{\partial u}{\partial t} = - \frac{1}{2}D{\cY}(u) u^{2-N}=  (N+2) u^{2-N} \Delta_{\infty}u - R_{\infty} u^{3-N} + r_{u^{N-2}g_{\infty}}u,
\end{equation}
where, as always in this article, $\cY$ is defined on the {unit volume conformal class} and $D(\cdot)$ is the corresponding constrained differential. We now project the Yamabe flow equation onto $\Lambda_{0}$ and $\Lambda_{0}^{\perp}$, so $u$ solves Yamabe flow if and only if the following two equations are satisfied 
\begin{equation*}\begin{split}
2(N-2)(\varphi+ w^{\T})' & = - \proj_{\Lambda_{0}} \left[ D{\cY}(  1 + \varphi + w^{\T}+ \Phi(\varphi + w^{\T}) + w^{\perp} ) u(t)^{2-N}\right] + E_{0}^{\T}(w)\\
2(N-2)(\Phi(\varphi+w^{\T})+w^{\perp})' & = - \proj_{\Lambda_{0}^{\perp}} \left[ D{\cY}(  1 + \varphi + w^{\T}+ \Phi(\varphi + w^{\T}) + w^{\perp} ) u(t)^{2-N}\right] + E_{0}^{\perp}(w).
\end{split}
\end{equation*}

Here, we emphasize that (as in the previous section) we are considering $D\cY(w)$ as a \emph{function} on $M$, via the $L^{2}(M,g_{\infty})$ pairing. In other words, we are using \eqref{eq:Dy-L2}. 
Now, we claim that we may use Taylor's theorem to show that
\begin{equation*}
\proj_{\Lambda_{0}} D \cY(  1 + \varphi + w^{\T}+ \Phi(\varphi + w^{\T}) + w^{\perp} ) =\proj_{\Lambda_{0}} D {\cY}(  1 + \varphi + w^{\T}+ \Phi(\varphi + w^{\T})) + E_{1}^{\T}(w),
\end{equation*}
with the following bounds
\begin{equation*}
\left\{
\begin{split}
& \Vert E_{1}^{\T}(w)\Vert _{C^{0,\alpha}}  \leq c ((T+t)^{-\frac{1}{p-2}} + \Vert w \Vert_{C^{2,\alpha}})\Vert w^{\perp}\Vert _{C^{2,\alpha}}\\
& \Vert E_{1}^{\T}(w_{1}) - E_{1}^{\T}(w_{2})\Vert _{C^{0,\alpha}}  \leq c ( \Vert w_{1}\Vert _{C^{2,\alpha}} + \Vert w_{2}\Vert _{C^{2,\alpha}}) \Vert w_{1}^{\perp} - w^{\perp}_{2}\Vert _{C^{2,\alpha}}
\end{split}
\right.
\end{equation*}
These follow from the integral form of the remainder in Taylor's theorem. Defing $\psi_{s,r} : = 1+r(
\varphi + w^{\T}+\Phi(\varphi + w^{\T}) + s w^{\perp})$, we have
\begin{align*}
E_{1}^{\T}(w) & = \int_{0}^{1}\frac{d}{ds} \proj_{\Lambda_{0}} D\cY (1+\varphi + w^{\T}+\Phi(\varphi + w^{\T}) + s w^{\perp})ds\\
& = \int_{0}^{1} \proj_{\Lambda_{0}}\frac{d}{ds}  D\cY (\psi_{s,1})ds\\
& = \int_{0}^{1}\proj_{\Lambda_{0}} D^{2}\cY(\psi_{s,1}^{N-2}g_{\infty})[w^{\perp}] ds\\
& = \int_{0}^{1}\proj_{\Lambda_{0}} D^{2}\cY(g_{\infty})[w^{\perp}] ds + \int_{0}^{1} \int_{0}^{s} \proj_{\Lambda_{0}} \frac{d}{d\tilde s} D^{2}\cY(\psi_{\tilde s}^{N-2}g_{\infty})[w^{\perp}]d\tilde sds\\
& = -2(N-2)\int_{0}^{1}\proj_{\Lambda_{0}}\cL_{\infty} w^{\perp} ds + \int_{0}^{1} \int_{0}^{s} \proj_{\Lambda_{0}} \frac{d}{d\tilde s} D^{2}\cY(\psi_{s,\tilde s}^{N-2}g_{\infty})[w^{\perp}]d\tilde sds\\
& =  \int_{0}^{1} \int_{0}^{s} \proj_{\Lambda_{0}} \frac{d}{d\tilde s} D^{2}\cY|(\psi_{s,\tilde s}^{N-2}g_{\infty})[w^{\perp}]d\tilde sds\\
& =  \int_{0}^{1} \int_{0}^{s} \proj_{\Lambda_{0}} D^{3}\cY(\psi_{s,\tilde s}^{N-2}g_{\infty})[w^{\perp},\varphi + w^{\T} +\Phi(\varphi + w^{\T}) + sw^{\perp}]d\tilde sds.
\end{align*}
The $C^{0,\alpha}$ norm of $ D^{3}\cY(\psi_{s,\tilde s}^{N-2}g_{\infty})[w^{\perp},\varphi + w^{\T} +\Phi(\varphi + w^{\T}) + sw^{\perp}]$ is uniformly bounded by 
\begin{equation*}
c ((T+t)^{-\frac{1}{p-2}} + \Vert w \Vert_{C^{2,\alpha}})\Vert w^{\perp}\Vert _{C^{2,\alpha}}
\end{equation*}
(as long as we choose $T \geq T_{0}$ large enough, and $\Vert w \Vert_{C^{2,\epsilon}}\leq \epsilon_{0}$ small enough to ensure that $\psi_{s,\tilde{s}}$ is sufficiently close to $1$ in $C^{2,\alpha}$). This is discussed in the end of 
Appendix A, along with the corresponding bound for $E_{1}^{\T}(w_{1}) - E_{1}^{\T}(w_{2})$.

Recall that $F(v) := \cY(\Psi(v))$, and
using the Lyapunov--Schmidt reduction (Proposition \ref{prop:LSred})
\begin{equation*}
\proj_{\Lambda_{0}} D{\cY}(  1 + \varphi + w^{\T}+ \Phi(\varphi + w^{\T})) = D F(\varphi+w^{\T}).
\end{equation*}
 Furthermore, by analyticity (Lemma \ref{AnalyticLemma} and Proposition \ref{prop:LSred}) $D F$ has a convergent power series representation around $0$ with lowest order term of order $p-1$. Thus,
 as long as $\varphi + w^{\T}$ is small enough, we may write
\begin{equation*}
D F(\varphi + w^{\T}) = D F(\varphi)  + D^{2} F(\varphi) (w^{\T}) + E^{\T}_{2}(w^{\T}).
\end{equation*}
where
\begin{equation*}
\left\{
\begin{split}
& \Vert E^{\T}_{2}(w^{\T})\Vert _{C^{0,\alpha}}  \leq  c((T+t)^{-\frac{p-3}{p-2}} + \Vert w^{\T}\Vert _{C^{0,\alpha}}^{p-3}) \Vert w^{\T}\Vert _{C^{0,\alpha}}^{2}\\
& \Vert E^{\T}_{2}(w_{1}^{\T}) - E^{\T}_{2}(w_{2}^{\T})\Vert _{C^{0,\alpha}}  \leq c\big((T+t)^{-\frac{p-3}{p-2}} (\Vert w_{1}^{\T}\Vert _{C^{0,\alpha}} +\Vert w_{2}^{\T}\Vert _{C^{0,\alpha}}) + \Vert w_{1}^{\T}\Vert _{C^{0,\alpha}}^{p-2} + \Vert w_{2}^{\T}\Vert _{C^{0,\alpha}}^{p-2} \big) \\
& \qquad\qquad\qquad\qquad\qquad\qquad\qquad\qquad\qquad\qquad\qquad\qquad\qquad\qquad\qquad \times \Vert w_{1}^{\T}-w_{2}^{\T}\Vert _{C^{0,\alpha}}.
\end{split}
\right.
\end{equation*}
We prove this in the case that $\Lambda_{0}$ has dimension equal to one, namely for $k=1$; the higher dimensional case follows from a similar argument using multi-index notation. We have that
\begin{equation*}
DF (z) = \sum_{j=p-1}^{\infty}c_{j}z^{j}.
\end{equation*}
Thus
\begin{align*}
\Vert E_{2}^{\T}(w^{\T})\Vert_{C^{0,\alpha}} & = \left \Vert \sum_{j=p-1}^{\infty} c_{j} \left[(\varphi +w^{\T})^{j}  - \varphi^{j} - j \varphi^{j-1} w^{\T}\right] \right\Vert_{C^{0,\alpha}}\\
& = \left \Vert \sum_{j=p-1}^{\infty} \sum_{l=2}^{j} c_{j} \binom j l \varphi^{j-l} (w^{\T})^{l} \right\Vert_{C^{0,\alpha}}\\
& \leq  \sum_{j=p-1}^{\infty} \sum_{l=2}^{j} |c_{j}| \binom j l  \Vert \varphi\Vert_{C^{0,\alpha}}^{j-l} \Vert w^{\T}\Vert_{C^{0,\alpha}}^{l}\\
& \leq \Vert w^{\T}\Vert_{C^{0,\alpha}}^{2}  \sum_{j=p-1}^{\infty} \sum_{l=2}^{j} |c_{j}| \binom j l \left(  \Vert \varphi\Vert_{C^{0,\alpha}}^{j-2} + \Vert w^{\T}\Vert_{C^{0,\alpha}}^{j-2} \right)\\
& \leq \Vert w^{\T}\Vert_{C^{0,\alpha}}^{2}  \sum_{j=p-1}^{\infty} |c_{j}| 2^{j} \left(  \Vert \varphi\Vert_{C^{0,\alpha}}^{j-2} + \Vert w^{\T}\Vert_{C^{0,\alpha}}^{j-2} \right)\\
& \leq 2^{p-1} \Vert \varphi\Vert_{C^{0,\alpha}}^{p-3}  \Vert w^{\T}\Vert_{C^{0,\alpha}}^{p-1}  \sum_{j=p-1}^{\infty} |c_{j}| 2^{j+1-p} \Vert \varphi\Vert_{C^{0,\alpha}}^{j+1-p} \\
& \qquad + 2^{p-1} \Vert w^{\T}\Vert_{C^{0,\alpha}}^{p-1}  \sum_{j=p-1}^{\infty} |c_{j}| 2^{j+1-p} \Vert w^{\T}\Vert_{C^{0,\alpha}}^{j+1-p}.
\end{align*}
Because $D^{p-1}F(z)$ has an absolutely convergent power series for every $z$ sufficiently close to $0$, choosing $\epsilon_{0}$ small enough, $T_{0}$ large enough, and using Lemma \ref{lemm:poly-decay-def-varphi}, we may guarantee that both sums are bounded above by $1$. The asserted bound on $ E_{2}^{\T}(w^{\T})$ follows. A similar argument yields the other bound. 

Thus, the above arguments show that the $\Lambda_{0}$-component of the Yamabe flow may be written as 
\begin{equation*}
2(N-2)(\varphi' + (w^{\T})' )= - D F(\varphi) -  D^{2} F(\varphi) (w^{\T}) +E^{\T}_{1}(w) - E^{\T}_{2}(w^{\T}).
\end{equation*}
Now, expanding $F$ in a power series, $F = F(0) + \sum_{j=p}^{\infty}F_{j}$, we may write the above expression as
\begin{equation*}
2(N-2)(\varphi' + (w^{\T})' )= -D F_{p}(\varphi) - D^{2} F_{p}(\varphi) (w^{\T}) + \underbrace{E^{\T}_{1}(w) - E^{\T}_{2}(w^{\T}) + E^{\T}_{3}(w)}_{:=E^{\T}(w)},
\end{equation*}
where 
\begin{equation*}
E_{3}^{\T}(w) = \sum_{j\geq p+1} (D F_{j}(\varphi) + D^{2} F_{j}(\varphi) w^{\T}).
\end{equation*}
By analyticity, this converges in, e.g.\ $C^{0,\alpha}$, for $\Vert \varphi\Vert _{C^{2,\alpha}}$ and $\Vert w\Vert _{C^{2,\alpha}}$ small enough. Because each term in the sum is a homogeneous polynomial,
and using the formula for $\varphi$,
the error is bounded as follows:
\begin{equation*}
\left\{
\begin{split}
& \Vert E_{3}^{\T}(w)\Vert _{C^{0,\alpha}}  \leq c((T+t)^{-\frac{p}{p-2}} +  (T+t)^{-\frac{p-1}{p-2}} \Vert w^{\T}\Vert _{C^{0,\alpha}})\\
& \Vert E_{3}^{\T}(w_{1}) - E_{3}^{\T}(w_{2})\Vert _{C^{0,\alpha}}  \leq c (T+t)^{-\frac{p-1}{p-2}} \Vert w^{\T}_{1} - w^{\T}_{2}\Vert _{C^{0,\alpha}}.
\end{split}
\right.
\end{equation*}
Therefore, by Lemma \ref{lemm:poly-decay-def-varphi}, $w^{\T}$ needs to satisfy the equation
\begin{equation*}
2(N-2)(w^{\T})' + D^{2} F_{p}(\varphi)w^{\T} = E^{\T}(w),
\end{equation*}
where $E^{\T}(w)$ satisfies the asserted bounds. 

Now we turn to 
the $\Lambda_{0}^{\perp}$ portion of 
the Yamabe flow.
First, 
recall that 
by Proposition \ref{prop:LSred},
\begin{equation*}
\proj_{\Lambda_{0}^{\perp}} D{\cY}(\Psi(\varphi+w^{\T})) = 0.
\end{equation*}
Combined with the fact that $D\proj_{\Lambda_{0}^{\perp}} D {\cY}$ at $1$ equals $-2(N-2)\proj_{\Lambda_{0}^{\perp}} \cL_{\infty}$ (this follows because $D$ and $\proj_{\Lambda_{0}^{\perp}}$ commute), we thus may use Taylor's theorem to write (using the fact that $\cL_{\infty}$ is linear)
\begin{equation*}
\proj_{\Lambda_{0}^{\perp}} D{\cY}(\Psi(\varphi+w^{\T}) + w^{\perp}) = -2(N-2)\cL_{\infty} w^{\perp} - E^{\perp}_{1}(w),
\end{equation*}
where
\begin{equation*}
\left\{
\begin{split}
& \Vert E^{\perp}_{1}(w)\Vert _{C^{0,\alpha}}  \leq c ((T+t)^{-\frac{1}{p-2}} + \Vert w\Vert _{C^{2,\alpha}}) \Vert w^{\perp}\Vert _{C^{2,\alpha}} \\
& \Vert E^{\perp}_{1}(w_{1}) - E^{\perp}_{1}(w_{2})\Vert _{C^{0,\alpha}}  \leq c(\Vert w_{1}\Vert _{C^{2,\alpha}} + \Vert w_{2}\Vert _{C^{2,\alpha}}) \Vert w_{1}^{\perp} - w_{2}^{\perp}\Vert _{C^{2,\alpha}} \\
& \qquad \qquad \qquad \qquad \qquad\qquad + c(\Vert w_{1}^{\perp}\Vert _{C^{2,\alpha}} + \Vert w_{2}^{\perp}\Vert _{C^{2,\alpha}}) \Vert w_{1} - w_{2}\Vert _{C^{2,\alpha}} .
\end{split}
\right.
\end{equation*}
To check this, we write
\begin{align*}
\proj_{\Lambda_{0}^{\perp}} & D{\cY}(\Psi(\varphi+w^{\T}) + w^{\perp}) \\
& = \proj_{\Lambda_{0}^{\perp}} D{\cY}(\Psi(\varphi+w^{\T}) ) + \int_{0}^{1}\frac{d}{ds} \proj_{\Lambda_{0}^{\perp}} D{\cY}(\Psi(\varphi+w^{\T}) + sw^{\perp}) ds\\
& = \int_{0}^{1} \proj_{\Lambda_{0}^{\perp}} D^{2}{\cY}(\Psi(\varphi+w^{\T}) + sw^{\perp}) [w^{\perp}] ds\\
& = -2(N-2)\cL_{\infty} w^{\perp} + \int_{0}^{1}\left[  \proj_{\Lambda_{0}^{\perp}}  D^{2}{\cY}(\Psi(\varphi+w^{\T}) + sw^{\perp}) [w^{\perp}] -  \proj_{\Lambda_{0}^{\perp}}  D^{2}\cY(1)[w^{\perp}]  \right] ds.
\end{align*}
Given this, we may control the asserted $C^{0,\alpha}$ bounds by the $C^{2,\alpha}$ norm of $\varphi$ and $w$, by an argument similar to $E_{1}^{\T}$ (the derivative of $\Psi$ is uniformly bounded near $0$ by Proposition \ref{prop:LSred}).

We also consider $\Phi(\varphi+w^{\T})' : = E^{\perp}_{2}(w)$ as an error term, as it satisfies
\begin{equation*}
\left\{
\begin{split}
& \Vert E^{\perp}_{2}(w)\Vert _{C^{0,\alpha}}  \leq c((T+t)^{-\frac{1}{p-2}} + \Vert w\Vert _{C^{2,\alpha}})((T+t)^{-1-\frac{1}{p-2}} + \Vert w'\Vert _{C^{0,\alpha}})\\
& \Vert E^{\perp}_{2}(w_{1}) - E^{\perp}_{2}(w_{2})\Vert _{C^{0,\alpha}}  \leq c((T+t)^{-\frac{1}{p-2}} + \Vert w_{1}\Vert _{C^{2,\alpha}} + \Vert w_{2}\Vert _{C^{2,\alpha}}) \Vert w_{1}' - w_{2}'\Vert _{C^{0,\alpha}} \\
& \qquad \qquad \qquad \qquad \qquad\qquad + c((T+t)^{-1-\frac{1}{p-2}} + \Vert w_{1}'\Vert _{C^{0,\alpha}} + \Vert w_{2}'\Vert _{C^{0,\alpha}})\Vert w_{1}-w_{2}\Vert _{C^{2,\alpha}}.
\end{split}
\right.
\end{equation*}
Here we have used \eqref{eq:LS-est-Dphi} and have controlled $\Vert w \Vert_{L^{2}}$ by the $C^{2,\alpha}$ norm. Thus, the kernel-orthogonal component of Yamabe flow is 
\begin{equation*}
(w^{\perp})' = \cL_{\infty} w^{\perp} + E^{\perp}(w),
\end{equation*}
where $E^{\perp}(w)$ satisfies the asserted bounds. Combining the $\Lambda_{0}$ equation with the $\Lambda_{0}^{\perp}$ equation finishes the proof. 
\end{proof}

\subsection{Solving the kernel-projected flow with polynomial decay estimates}

In this subsection we solve the
kernel-projected flow \eqref{kpfEq}.
First, from the definition of $\varphi$ in \eqref{phitEq} 
and the fact that $D^{2} F_{p}$ is homogeneous of degree $p-2$,
\begin{equation*}
D^{2} F_{p}(\varphi) 
= (T+t)^{-1} \left( \frac{2(N-2)}{p(p-2)F_{p}(\hat v)} \right) D^{2} F_{p}(\hat v).
\end{equation*}
Diagonalize the Hessian term,
and denote by $\mu_{1},\dots,\mu_{k}$ the eigenvalues of 
\begin{equation*}
\frac{2(N-2)}{p(p-2)F_{p}(\hat v)} D^{2} F_{p}(\hat v).
\end{equation*}
Let $e_{i}$ be an orthonormal basis in which this Hessian is diagonalized.
Thus, 
the kernel-projected 
flow 
is equivalent to the following system of ODEs for\footnote{Using, as above, the natural $L^{2}$ inner product on $\Lambda_{0}$ regarded as a subset of $T_{1}[g_{\infty}]_{1}$.}
$v_{i}:=w^{\T}\cdot e_{i}$,
\begin{equation}\label{ODESysEq}
(N-2)v_{i}' + \frac{\mu_{i}}{T+t} v_{i} =E^{\T}_{i}
:=E^{\T}\cdot e_{i}, \quad i=1,\ldots,k.
\end{equation}
Fix 
for the rest of this subsection a number $\gamma$ satisfying
$\gamma \not\in\{\frac{\mu_{1}}{2(N-2)},\ldots,\frac{\mu_{k}}{2(N-2)}\}$.
Define the following weighted norms:
\begin{equation*}
\Vert u \Vert _{C^{0,\alpha}_{\gamma}} : = 
\sup_{t>0}\, [ (T+t)^{\gamma}\Vert u(t)\Vert_{C^{0,\alpha}}],\qquad 
\Vert u\Vert _{C^{0,\alpha}_{1,\gamma}} : = \Vert u\Vert _{C^{0,\alpha}_{\gamma}} + \Vert u'\Vert _{C^{0,\alpha}_{1+\gamma}}.
\end{equation*}
We recall that these H\"older norms are space-time norms on the interval $(t,t+1)\times M$, as defined in \eqref{eq:hold-norm}.

Given $\gamma$ as above, let $\Pi_{0}=\Pi_{0}(\gamma)$ 
denote 
the linear subspace of $\Lambda_{0}$ generated by the eigenvectors of 
$\frac{2(N-2)}{p(p-2)F_{p}(\hat v)} D^{2} F_{p}(\hat v)$ whose eigenvalue, say $\mu$,  satisfies $\mu>2(N-2)\gamma$. Moreover, let $\proj_{\Pi_{0}}:\Lambda_{0}\to\Pi_{0}$ be the corresponding linear projector.

The next lemma concerns the system \eqref{ODESysEq}.
\begin{lemma}\label{lemm:ker-proj-ODE}
For any $T>0$ such that $\Vert E^{\T}\Vert_{C^{0,\alpha}_{1+\gamma}}<\infty$,
there is a unique $u$ with $u(t)\in\Lambda_0, t\in[0,\infty),$ satisfying $\Vert u\Vert _{C^{0}_{\gamma}} <\infty$, 
$\proj_{\Pi_{0}}\left(u(0)\right)= 0$, and such that $v_i:=u\cdot e_i$
solves the system \eqref{ODESysEq}.
Furthermore, we have the bound
\begin{equation*}
\Vert u\Vert _{C^{0,\alpha}_{1,\gamma}} \leq C  \Vert E^{\T}\Vert _{C^{0,\alpha}_{1+\gamma}}.
\end{equation*}
\end{lemma}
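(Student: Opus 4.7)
\emph{Plan.} The approach is to diagonalize the system \eqref{ODESysEq} along the eigenbasis $\{e_{i}\}$ of $\Lambda_{0}$ and solve each decoupled scalar ODE by an integrating factor, choosing the free constant in each eigendirection according to whether the direction lies in $\Pi_{0}$ or $\Pi_{0}^{\perp}$. Writing $u(t) = \sum_{i} v_{i}(t)\,e_{i}$ and $E^{\T}(t) = \sum_{i} E^{\T}_{i}(t)\,e_{i}$, each scalar equation $(N-2)v_{i}' + \mu_{i}(T+t)^{-1}v_{i} = E^{\T}_{i}$ admits an integrating factor $(T+t)^{\lambda_{i}}$ with exponent $\lambda_{i}$ proportional to $\mu_{i}$. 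The threshold defining $\Pi_{0}$ is chosen precisely so that $\lambda_{i} > \gamma$ exactly when $e_{i} \in \Pi_{0}$ and $\lambda_{i} < \gamma$ when $e_{i} \in \Pi_{0}^{\perp}$, the borderline case being excluded by the hypothesis $\gamma \notin \{\mu_{j}/(N-2),\ldots\}$.

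Accordingly, for $e_{i} \in \Pi_{0}$ I will impose $v_{i}(0) = 0$ and integrate from $0$:
\begin{equation*}
v_{i}(t) = \mathrm{const}\cdot (T+t)^{-\lambda_{i}}\int_{0}^{t}(T+s)^{\lambda_{i}} E^{\T}_{i}(s)\,ds,
\end{equation*}
while for $e_{i} \in \Pi_{0}^{\perp}$ any nonzero homogeneous component would violate $\Vert v_{i}\Vert_{C^{0}_{\gamma}}<\infty$, so the unique admissible choice is to integrate from $\infty$:
\begin{equation*}
v_{i}(t) = -\mathrm{const}\cdot (T+t)^{-\lambda_{i}}\int_{t}^{\infty}(T+s)^{\lambda_{i}} E^{\T}_{i}(s)\,ds.
\end{equation*}
In either case, inserting the pointwise bound $|E^{\T}_{i}(s)|\leq (T+s)^{-1-\gamma}\Vert E^{\T}\Vert_{C^{0}_{1+\gamma}}$ and computing the resulting elementary power integral (which converges precisely because of the strict gap $\lambda_{i}\neq\gamma$) yields $|v_{i}(t)|\leq C(T+t)^{-\gamma}\Vert E^{\T}\Vert_{C^{0}_{1+\gamma}}$, with $C$ depending only on the gap. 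Uniqueness is then immediate: the difference of two admissible solutions solves the homogeneous system, has $\Pi_{0}$-component vanishing by the initial condition, and has $\Pi_{0}^{\perp}$-component vanishing by the decay constraint.

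For the full $C^{0,\alpha}_{1,\gamma}$ bound, the derivative estimate is algebraic: the ODE rewrites as $(N-2)v_{i}' = E^{\T}_{i} - \mu_{i}(T+t)^{-1}v_{i}$, so once $\Vert v_{i}\Vert_{C^{0,\alpha}_{\gamma}}$ is controlled, the parabolic Hölder norm of $v_{i}'$ on $(t,t+1)\times M$ inherits a bound of the form $\Vert v_{i}'\Vert_{C^{0,\alpha}_{1+\gamma}}\leq C\Vert E^{\T}\Vert_{C^{0,\alpha}_{1+\gamma}}$. The time-Hölder part of $\Vert v_{i}\Vert_{C^{0,\alpha}_{\gamma}}$ will be extracted from the integral representations by differencing at two times and using the Hölder norm of $E^{\T}_{i}$; the smoothness of the weight $(T+s)^{\lambda_{i}}$ in $s$, together with a rescaling $s = t+\tau$, $\tau\in[0,1]$, will make the resulting estimates uniform in $t$ provided $T>0$. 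Finally, since $\Lambda_{0}$ is finite-dimensional with smooth time-independent eigenfunctions, assembling the scalar bounds on the $v_{i}$ into a parabolic $C^{0,\alpha}$ bound on $u = \sum_{i} v_{i} e_{i}$ on $(t,t+1)\times M$ costs only a fixed multiplicative constant. No serious obstacle is anticipated: the only genuine subtlety is the dichotomy between the two integration endpoints, and that is dictated entirely by the position of $\mu_{i}$ relative to the threshold defining $\Pi_{0}$.
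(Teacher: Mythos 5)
Your proposal follows the same route as the paper: decouple the system along the eigenbasis of $D^{2}F_{p}(\hat v)$, solve each scalar ODE with the integrating factor $(T+t)^{\lambda_{i}}$ (the paper writes this as the change of variables $w_{j}=(T+t)^{\mu_j/(2(N-2))}v_{j}$), integrate from $0$ for the $\Pi_{0}$-directions and from $\infty$ for the complementary ones, derive the $C^{0}_{\gamma}$ bound from the pointwise weighted estimate on $E^{\T}$, then obtain the derivative bound algebraically from the ODE and assemble the parabolic H\"older bounds using the finite-dimensionality of $\Lambda_{0}$. The only cosmetic divergence is that the paper recovers the time-H\"older seminorm of $v_i$ from a $C^{1}$ bound on $v_i'$ rather than by differencing the integral representation, but these amount to the same estimate.
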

\begin{proof}
Letting
\begin{equation*}
w_{j}:=(T+t)^{\frac{\mu_{j}}{2(N-2)} } v_{j},
\end{equation*}
the system \eqref{ODESysEq} is equivalent to 
\begin{equation*}
w_{j}' = \frac{1}{2(N-2)} (T+t)^{\frac{\mu_{j}}{2(N-2)}} E^{\T}_{j}, \quad j=1,\ldots,k.
\end{equation*}

Suppose that $j$ 
is such that
$\gamma > \frac{\mu_{j}}{2(N-2)}$. Then, we claim that we may solve the $j$-th
ODE as 
\begin{equation*}
w_{j}(t) = \alpha_{j} - (2(N-2))^{-1} \int_{t}^{\infty} (T+\tau)^{\frac{\mu_{j}}{2(N-2)}} E^{\T}_{j}(\tau) d\tau.
\end{equation*}
which would give
\begin{equation*}
u_{j}(t) =(T+t)^{-\frac{\mu_{j}}{2(N-2)}}  \alpha_{j}  - (2(N-2))^{-1} (T+t)^{-\frac{\mu_{j}}{2(N-2)}}  \int_{t}^{\infty} (T+\tau)^{\frac{\mu_{j}}{2(N-2)}} E^{\T}_{j}(\tau)d\tau.
\end{equation*}
This amounts to checking that the integral converges under our assumptions on $E^{\T}$:
\begin{equation*}\begin{split}
& \left| (T+t)^{-\frac{\mu_{j}}{2(N-2)}}  \int_{t}^{\infty} (T+\tau)^{\frac{\mu_{j}}{2(N-2)}} E^{\T}_{i}(\tau)d\tau\right| \\
& \leq (T+t)^{-\frac{\mu_{j}}{2(N-2)}}\Vert E^{\T}_{j}\Vert_{C^{0}_{1+\gamma}}  \int_{t}^{\infty} (T+\tau)^{\frac{\mu_{i}}{2(N-2)}-\gamma-1} d\tau\\
& =  
\left(
\gamma - \frac{\mu_{j}}{2(N-2)} 
\right)^{-1} (T+t)^{-\frac{\mu_{j}}{2(N-2)}}\Vert E^{\T}_{i}\Vert _{C^{0}_{1+\gamma}} \left( T+t\right)^{\frac{\mu_{j}}{2(N-2)} - \gamma}\\
& = C_{i} (T+t)^{-\gamma} \Vert E^{\T}_{j}\Vert_{C^{0}_{1+\gamma}}
\end{split}
\end{equation*}
The previous estimate also shows that, since 
by assumption
$\gamma > \frac{\mu_{j}}{2(N-2)}$, to have $\Vert u \Vert_{C^{0,\alpha}_{\gamma}} < \infty$ it must hold that $\alpha_{j} = 0$.

On the other hand, if $\gamma < \frac{\mu_{j}}{2(N-2)}$, we may solve the ODE as
\begin{equation*}
w_{j}(t) = \alpha_{j} + (2(N-2))^{-1} \int_{0}^{t} (T+\tau)^{\frac{\mu_{j}}{2(N-2)}} E^{\T}_{j}(\tau) d\tau.
\end{equation*}
By requiring $\proj_{\Pi_{0}} u(0) = 0$, we see that $\alpha_{j} = 0$. As such, the bounds for $\Vert u_{j}\Vert_{C^{0}_{\gamma}}$ follow from a similar calculation as before.

Combining these two cases proves existence, uniqueness and the $\Vert u\Vert_{C^{0}_{\gamma}}$ bound. It thus remains to show the inequality 
$\Vert u\Vert _{C^{0,\alpha}_{1,\gamma}} \leq C  \Vert E^{\T}\Vert _{C^{0,\alpha}_{1+\gamma}}.
$
By finite dimensionality, the (spatial) $C^{0,\alpha}(M)$-H\"older norms of each basis element in $\Lambda_{0}$ are uniformly bounded. Thus, it remains to show that the desired inequality holds for the H\"older norm in the time direction, along with the same thing for $u'(t)$ (the general space-time norm will then be bounded by the triangle inequality). Suppose that $j$ 
is such that
$\gamma > \frac{\mu_{j}}{2(N-2)}$. Then, we have seen above that
\begin{equation*}
u_{j}(t) = - (2(N-2))^{-1} (T+t)^{-\frac{\mu_{j}}{2(N-2)}}  \int_{t}^{\infty} (T+\tau)^{\frac{\mu_{j}}{2(N-2)}} E^{\T}_{j}(\tau)d\tau.
\end{equation*}
Notice that
\begin{equation*}
u_{j}'(t) = \mu_{j} (T+t)^{-\frac{\mu_{j}}{2(N-2)}-1}  \int_{t}^{\infty} (T+\tau)^{\frac{\mu_{j}}{2(N-2)}} E^{\T}_{j}(\tau)d\tau  - (2(N-2))^{-1} E_{j}^{\T}(t).
\end{equation*}
Thus,
\begin{align*}
\Vert u'_{j}\Vert_{C^{0,\alpha}} & \leq  C \left\Vert  (T+t)^{-\frac{\mu_{j}}{2(N-2)}-1}  \int_{t}^{\infty} (T+\tau)^{\frac{\mu_{j}}{2(N-2)}} E^{\T}_{j}(\tau)d\tau \right\Vert_{C^{1}} + C \Vert E_{j}^{\T}(t) \Vert_{C^{0,\alpha}}\\
& \leq C \left\Vert  (T+t)^{-\frac{\mu_{j}}{2(N-2)}-2}  \int_{t}^{\infty} (T+\tau)^{\frac{\mu_{j}}{2(N-2)}} E^{\T}_{j}(\tau)d\tau \right\Vert_{C^{0}} + C \Vert E_{j}^{\T}(t) \Vert_{C^{0,\alpha}}\\
& \leq C(T+t)^{-1-\gamma} \Vert E_{j}^{\T} \Vert_{C^{0,\alpha}_{1+\gamma}}. 
\end{align*}
We may use the $C^{0}$-bound on $u'_{j}$ to obtain a H\"older estimate for $u_{j}$. From this, the claimed inequality follows.
\end{proof}

\subsection{Solving the kernel-orthogonal projected flow}

Define the weighted norms
$$
\Vert u \Vert_{L^{2}_{q}} = \sup_{t\in [0,\infty)}[(T+t)^{q} \Vert u(t)\Vert_{L^{2}(M)}],
$$ 
where the $L^{2}$ norm is the spatial norm of $u(t)$ on $M$ and is taken with respect to $g_{\infty}$, and
\begin{equation*}
\Vert u \Vert _{C^{2,\alpha}_{q}} = \sup_{t\geq 0}[(T+t)^{q}\Vert u(t)\Vert_{C^{2,\alpha}}],
\end{equation*}
where the H\"older norms are the space-time norms defined in \eqref{eq:hold-norm}. Also, let
\begin{align*}
\Lambda_{\uparrow} & : = \Span\{ \varphi \in C^{\infty}(M) : \cL_{\infty} \varphi + \delta \varphi = 0 ,\delta < 0\},\\
\Lambda_{\downarrow} & : = \overline { \Span \{ \varphi\in C^{\infty}(M) : \cL_{\infty} \varphi + \delta \varphi = 0,\delta > 0\}}^{L^{2}}.
\end{align*}
From the spectral theory of the Laplacian, 
$L^{2}(M,g_{\infty}) = \Lambda_{\uparrow}\oplus\Lambda_{0}\oplus \Lambda_{\downarrow}$  
and 
$\Lambda_{\uparrow}$ and $\Lambda_{0}$ are finite dimensional. 
Write the non-negative integers as an ordered union $\NN = K_{\uparrow}\cup K_{0}\cup K_{\downarrow}$, where the ordering of the indices comes from an ordering of the eigenfunctions of the Laplacian, $\Delta_{g_{\infty}}$ and the partitioning of $\NN$ corresponds to which of $\Lambda_{\downarrow},\Lambda_{0},$ or $\Lambda_{\uparrow}$, the $k$-th eigenfunction of $\Delta_{g_{\infty}}$ lies in. 

\begin{lemma}\label{lemm:ker-orthog-prob}
For any $T>0$ and $q < \infty$ 
such that $\Vert E^\perp\Vert_{L^{2}_{q}}<\infty$,
there is a unique $u(t)$ with $u(t)\in\Lambda^\perp_0, t\in[0,\infty),$ 
satisfying $\Vert u\Vert_{L^{2}_{q}} <\infty$,
$\proj_{\Lambda_{\downarrow}} (u(0)) = 0$,  and
\begin{equation}\label{LemmaFlowEq}
u' = \cL_{\infty}u + E^\perp.
\end{equation}
Furthermore,
$
\Vert u\Vert_{L^{2}_{q}} \leq C \Vert E^{\perp}\Vert_{L^{2}_{q}},
$
and
$\Vert u\Vert_{C^{2,\alpha}_{q}} \leq C \Vert E^{\perp}\Vert_{C^{0,\alpha}_{q}}.
$
\end{lemma}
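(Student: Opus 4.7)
My approach is to solve the linear parabolic equation \eqref{LemmaFlowEq} mode-by-mode via the spectral decomposition of the self-adjoint elliptic operator $\cL_{\infty}$ on $L^{2}(M,g_{\infty})$. Let $\{\varphi_{k}\}$ be an $L^{2}$-orthonormal eigenbasis with $\cL_{\infty}\varphi_{k}+\delta_{k}\varphi_{k}=0$, partitioned by $\NN=K_{\uparrow}\cup K_{0}\cup K_{\downarrow}$ as in the statement. Because $\Lambda_{0}$ and $\Lambda_{\uparrow}$ are finite-dimensional, there are spectral gaps $\delta_{*},\delta^{*}>0$ with $\delta_{k}\geq\delta_{*}$ for every $k\in K_{\downarrow}$ and $-\delta_{k}\geq\delta^{*}$ for every $k\in K_{\uparrow}$. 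Expanding $u(t)=\sum_{k}u_{k}(t)\varphi_{k}$ and $E^{\perp}(t)=\sum_{k}e_{k}(t)\varphi_{k}$ (with $e_{k}\equiv 0$ for $k\in K_{0}$ since $E^{\perp}\in\Lambda_{0}^{\perp}$), the problem decouples into scalar ODEs $u_{k}'=-\delta_{k}u_{k}+e_{k}$. I would solve these by the forward Duhamel formula with zero initial data for stable modes,
\begin{equation*}
u_{k}(t)=\int_{0}^{t}e^{-\delta_{k}(t-s)}\,e_{k}(s)\,ds\quad (k\in K_{\downarrow}),
\end{equation*}
by the backward Duhamel formula from $+\infty$ for unstable modes,
\begin{equation*}
u_{k}(t)=-\int_{t}^{\infty}e^{-\delta_{k}(t-s)}\,e_{k}(s)\,ds\quad (k\in K_{\uparrow}),
\end{equation*}
and $u_{k}\equiv 0$ for $k\in K_{0}$. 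The second integral converges because $e^{-\delta_{k}(t-s)}=e^{|\delta_{k}|(t-s)}\leq e^{-\delta^{*}(s-t)}$ for $s\geq t$, combined with the polynomial decay of $e_{k}$ from $\Vert E^{\perp}\Vert_{L^{2}_{q}}<\infty$. By construction $u(t)\in\Lambda_{0}^{\perp}$ for all $t\geq 0$ and $\proj_{\Lambda_{\downarrow}}(u(0))=0$.

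For the weighted $L^{2}$-bound, I would split $u=u_{\downarrow}+u_{\uparrow}$ and use the exponential decay bounds $\Vert e^{\cL_{\infty}\tau}v\Vert_{L^{2}}\leq e^{-\delta_{*}\tau}\Vert v\Vert_{L^{2}}$ on $\Lambda_{\downarrow}$ for $\tau\geq 0$ and $\Vert e^{\cL_{\infty}(t-s)}v\Vert_{L^{2}}\leq e^{-\delta^{*}(s-t)}\Vert v\Vert_{L^{2}}$ on the finite-dimensional $\Lambda_{\uparrow}$ for $s\geq t$. Combining with $\Vert E^{\perp}(s)\Vert_{L^{2}}\leq (T+s)^{-q}\Vert E^{\perp}\Vert_{L^{2}_{q}}$ and evaluating the two Duhamel convolutions---splitting the range at $s=t/2$ on the stable side and using the monotonicity $(T+s)^{-q}\leq (T+t)^{-q}$ for $s\geq t$ on the unstable side---yields $\Vert u(t)\Vert_{L^{2}}\leq C(T+t)^{-q}\Vert E^{\perp}\Vert_{L^{2}_{q}}$. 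Uniqueness is then immediate: if $v$ is the difference of two solutions, then $v'=\cL_{\infty}v$ with $\proj_{\Lambda_{\downarrow}}(v(0))=0$ and $\Vert v\Vert_{L^{2}_{q}}<\infty$; the stable component satisfies a homogeneous Cauchy problem with zero data and therefore vanishes, while each unstable mode $v_{k}(t)=e^{|\delta_{k}|t}v_{k}(0)$ can be polynomially bounded only if $v_{k}(0)=0$.

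To upgrade this $L^{2}_{q}$-control to the desired $C^{2,\alpha}_{q}$-bound, I would apply interior parabolic Schauder estimates to the linear equation $\partial_{t}u-\cL_{\infty}u=E^{\perp}$ (the operator $\partial_{t}-\cL_{\infty}$ is uniformly parabolic on $M$) on each space-time slab $(t,t+1)\times M$ for $t\geq 1$:
\begin{equation*}
\Vert u\Vert_{C^{2,\alpha}((t,t+1)\times M)}\leq C\bigl(\Vert u\Vert_{L^{2}((t-1,t+1)\times M)}+\Vert E^{\perp}\Vert_{C^{0,\alpha}((t-1,t+1)\times M)}\bigr).
\end{equation*}
Multiplying through by $(T+t)^{q}$ and using $(T+t-1)^{q}\asymp (T+t)^{q}$ turns the previously established $L^{2}_{q}$-estimate into the $C^{2,\alpha}_{q}$-estimate away from the initial time. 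For $t\in[0,1]$ I would invoke the analogous boundary parabolic Schauder estimate, noting that the prescribed initial data is regular: it is zero on $\Lambda_{\downarrow}$, and on the finite-dimensional $\Lambda_{\uparrow}$ it is given by $u_{\uparrow}(0)=-\sum_{k\in K_{\uparrow}}\bigl(\int_{0}^{\infty}e^{\delta_{k}s}e_{k}(s)\,ds\bigr)\varphi_{k}$, whose norm is controlled by $\Vert E^{\perp}\Vert_{C^{0,\alpha}_{q}}$. The main obstacle I expect is the careful bookkeeping of the polynomial weights through the two distinct Duhamel integrations (forward for stable modes, backward for unstable ones) in order to produce a clean factor $(T+t)^{-q}$, together with verifying that the parabolic Schauder machinery applies uniformly up to $t=0$ with constants depending acceptably on $T$, $q$, and $g_{\infty}$.
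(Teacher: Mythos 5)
Your proposal follows essentially the same route as the paper: spectral decomposition of $\cL_{\infty}$, forward Duhamel from $t=0$ on the stable modes $K_{\downarrow}$, backward Duhamel from $t=\infty$ on the (finite-dimensional) unstable modes $K_{\uparrow}$, then upgrading the resulting $L^{2}_{q}$ bound to $C^{2,\alpha}_{q}$ by interior parabolic Schauder estimates on slabs $(t,t+1)\times M$ and a global Schauder estimate near $t=0$. The one place where you gloss over a technical point is the Schauder estimate in the form
\begin{equation*}
\Vert u\Vert_{C^{2,\alpha}((t,t+1)\times M)}\leq C\bigl(\Vert u\Vert_{L^{2}((t-1,t+1)\times M)}+\Vert E^{\perp}\Vert_{C^{0,\alpha}((t-1,t+1)\times M)}\bigr):
\end{equation*}
the standard interior estimate controls $\Vert u\Vert_{C^{2,\alpha}}$ by $\sup |u|$ plus the $C^{0,\alpha}$ norm of $E^{\perp}$, not directly by the $L^{2}$ norm of $u$. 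The paper handles this with an interpolation inequality $\sup_{M}|\varphi|\leq c(\epsilon)\Vert\varphi\Vert_{L^{2}(M)}+\epsilon\Vert\varphi\Vert_{C^{0,\alpha}(M)}$, after which the $\epsilon\Vert u\Vert_{C^{0,\alpha}_{q}}$ term is absorbed into the left-hand side (once the supremum over all $t\geq0$ is available, which requires the global Schauder estimate to cover $t\in[0,1]$). You should either invoke that interpolation/absorption explicitly or cite an $L^{2}\!\to C^{2,\alpha}$ parabolic regularity bootstrap; as written, the inequality is not the one that comes directly out of the Schauder theory.

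Similarly, your control of the initial slice, $\Vert u_{\uparrow}(0)\Vert_{C^{2,\alpha}}\lesssim\Vert E^{\perp}\Vert_{C^{0,\alpha}_{q}}$, is correct but deserves a word: the paper notes that since $\Lambda_{\uparrow}$ is finite-dimensional the $C^{2,\alpha}$ and $L^{2}$ norms on it are uniformly equivalent, and then uses Parseval together with the $L^{2}_{q}$ decay of $E^{\perp}$ to bound $\Vert u(0)\Vert_{L^{2}}$. With those two clarifications your argument matches the paper's.
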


\begin{proof}
Recall that 
\begin{equation}
(N-2)\cL_{\infty} = (N+2) \Delta_{\infty}+ (N-2)R_{\infty}.
\end{equation}
Let $\varphi_{i}$ be an eigenfunction (with eigenvalue $\delta_{i}$) of $\frac 12 \cL_{\infty}$ which is orthogonal to the kernel $\Lambda_{0}$. 
The flow equation \eqref{LemmaFlowEq} reduces to the system
\begin{equation}
u_{i}' + \delta_{i}u_{i} = E^{\perp}_{i}
\end{equation}
where $E^{\perp}_{i} = \langle E^{\perp},\varphi_{i} \rangle$,
and $u_i=\langle u,\varphi_i\rangle$. This is equivalent to
\begin{equation}
\left( e^{\delta_{i}t }u_{i} \right)' = e^{\delta_{i}t}E^{\perp}_{i}
\end{equation}
Thus, we may represent the components of the solution as 
\begin{equation*}
u_i^{\perp} (t) = \beta_{i} e^{-\delta_{i}t}+ e^{-\delta_{i}t}\int_{0}^{t}e^{\delta_{i}\tau} E^{\perp}_{i}(\tau) d\tau
\end{equation*}
for $i \in K_{\downarrow}$ or
\begin{equation*}
u_i^{\perp} (t) = \beta_{i} e^{-\delta_{i}t}- e^{-\delta_{i}t}\int_{t}^{\infty}e^{\delta_{i}\tau} E^{\perp}_{i}(\tau) d\tau
\end{equation*}
for $i\in K_{\uparrow}$. In particular, we have that
\begin{equation*}
u (t) = \sum_{j\in K_{\downarrow}} \left( \beta_{j} e^{-\delta_{j}t} + e^{-\delta_{j}t}\int_{0}^{t}e^{\delta_{j}\tau} E^{\perp}_{j}(\tau) d\tau \right) \varphi_{j} + \sum_{j\in K_{\uparrow}} \left( \beta_{j} e^{-\delta_{j}t} - e^{-\delta_{j}t}\int_{t}^{\infty}e^{\delta_{j}\tau} E^{\perp}_{j}(\tau) d\tau \right) \varphi_{j}
\end{equation*}
This sum is in an $L^{2}$ sense (but then elliptic regularity guarantees that the sum converges uniformly on compact time intervals). We note that for $i \in K_{\uparrow}$, if $\Vert u \Vert_{L^{2}_{q}} < \infty$ then necessarily $\beta_{i} = 0$. Furthermore, by requiring that $\proj_{\Lambda_{\downarrow}} u(0) = 0$, 
then we also have $\beta_{i}=0$
for $i \in K_{\downarrow}$.

We can write the following integral bounds for $u$:
\begin{align*}
\left\Vert \sum_{j\in K_{\downarrow}} u_{j}(t)\varphi_{j} \right\Vert^{2}_{L^{2}} 
&  \leq \sum_{j\in K_{\downarrow}}\left(\int_{0}^{t}e^{\delta_{j}\left(\tau-t\right)}E_{j}^{\perp}\left(\tau\right)\,d\tau\right)^{2}\\
 & \leq \sum_{j\in K_{\downarrow}}\left(\int_{0}^{t}e^{\delta_{\min}\left(\tau-t\right)}E_{j}^{\perp}\left(\tau\right)\,d\tau\right)^{2} \\
& \leq\left\|\int_{0}^{t}e^{\delta_{\min}\left(\tau-t\right)}E^{\perp}(\tau)\,d\tau \right\|^{2}_{L^{2}}
\end{align*}
where 
$\delta_{\min}= \min_{j\in K_{\downarrow}}\delta_{j}$ and in the second to last inequality made use of the Parseval identity.

Taking square roots,
\begin{equation*}
\left\Vert \sum_{j\in K_{\downarrow}} u_{j}(t)\varphi_{j} \right\Vert_{L^{2}} \leq \left\|\int_{0}^{t}e^{\delta_{\min}\left(\tau-t\right)}E^{\perp}(\tau)\,d\tau \right\|_{L^{2}}\leq \int_{0}^{t}e^{\delta_{\min}\left(\tau-t\right)}\left\|E^{\perp}\right\|_{L^{2}}\,d\tau
\end{equation*}
and hence we can finally make use of our decay assumption on $E^{\perp}$ to get
\begin{equation*}
\left\Vert \sum_{j\in K_{\downarrow}} u_{j}(t)\varphi_{j} \right\Vert_{L^{2}} \leq \Vert E^{\perp} \Vert_{q}  \int_{0}^{t}  e^{\delta_{\min}(\tau-t)} (T+\tau)^{-q}d\tau.
\end{equation*}
We bound the integral as follows
\begin{align*}
 \int_{0}^{t}  e^{\delta_{\min}(\tau-t)} (T+\tau)^{-q}d\tau & =  \int_{0}^{t/2}  e^{\delta_{\min}(\tau-t)} (T+\tau)^{-q}d\tau +  \int_{t/2}^{t}  e^{\delta_{\min}(\tau-t)} (T+\tau)^{-q}d\tau \\
 & \leq T^{-q} \int_{0}^{t/2}  e^{\delta_{\min}(\tau-t)} d\tau +  (T+t/2)^{-q} \int_{t/2}^{t}  e^{\delta_{\min}(\tau-t)} d\tau \\
  & \leq\delta_{\min}^{-1} T^{-q} \left(  e^{-\delta_{\min}t/2} - e^{-\delta_{\min}t} \right) + \delta_{\min}^{-1} (T+t/2)^{-q} \left( 1- e^{-\delta_{\min}t/2} \right).
\end{align*}
From this we see that
\begin{equation*}
\left\Vert \sum_{j\in K_{\downarrow}} u_{j}(t)\varphi_{j} \right\Vert_{L^{2}} \leq C\Vert E^{\perp}\Vert_{L^{2}_{q}}(T+t)^{-q} 
\end{equation*}
A similar argument holds for the $K_{\uparrow}$ terms. From this, the asserted bounds for $\Vert u\Vert_{L^{2}_{q}}$ follow readily. 

The rest of the proof is devoted to showing that the $C^{2,\alpha}_{q}$ bounds follow from the $L^{2}_{q}$ bounds. By interior parabolic Schauder estimates \cite[Theorem 4.9]{Lieberman:parPDE}, we have that for $t\geq 1$,
\begin{equation*}
\Vert u (t) \Vert_{C^{2,\alpha}}\leq C \left( \sup_{s \in (t-1,t+1)\times M} |u(s,x)| + \Vert E^{\perp}\Vert_{C^{0,\alpha}((t-1,t+1)\times M)} \right).
\end{equation*}
We emphasize that the $C^{2,\alpha}$ norm on the left hand side is the space-time norm on $(t,t+1)\times M$, as defined in \eqref{eq:hold-norm}.

We claim that for $\epsilon >0$, there exists $c(\epsilon)>0$ so that for any function $\varphi \in C^{0,\alpha}(M)$, 
\begin{equation*}
\sup_{x\in M} |\varphi(x)| \leq c(\epsilon) \Vert \varphi\Vert_{L^{2}(M)} + \epsilon \Vert \varphi \Vert_{C^{0,\alpha}(M)}.
\end{equation*}
This follows immediately from an argument by contradiction in conjunction with Arzel\`a--Ascoli. Using this in the Schauder estimate yields (bounding the supremum of the spatial $C^{0,\alpha}(M)$ norm over $t\in(t-1,t+1)$ by the space-time H\"older norm on $(t-1,t+1)\times M$)
\begin{equation*}
\Vert u(t) \Vert_{C^{2,\alpha}}\leq C \left( \sup_{s \in (t-1,t+1)} \Vert u(s,x)\Vert_{L^{2}(M)} + \Vert E^{\perp}\Vert_{C^{0,\alpha}((t-1,t+1)\times M)} \right) + C \epsilon \Vert u(t)\Vert_{C^{0,\alpha}((t-1,t+1)\times M)}.
\end{equation*}
Multiplying by $(T+t)^{q}$ and taking the supremum over $t\geq 1$ yields
\begin{align*}
\sup_{t\geq 1} & \left[ (T+t)^{q}\Vert u(t) \Vert_{C^{2,\alpha}} \right] \\
& \leq C \left( \sup_{t\geq 0} \left[(T+t)^{q} \Vert u(s,x)\Vert_{L^{2}(M)}\right] + \sup_{t\geq 0}\left[ (T+t)^{q}\Vert E^{\perp}\Vert_{C^{0,\alpha}((t,t+1)\times M)}\right] \right)\\
& \qquad +  C\epsilon\sup_{t\geq 0}\left[ (T+t)^{q} \Vert u(t)\Vert_{C^{0,\alpha}((t-1,t+1)\times M)}\right]\\
& = C\left(\Vert u\Vert_{L^{2}_{q}} + \Vert E^{\perp}\Vert_{C^{0,\alpha}_{q}}\right) +C  \epsilon  \Vert u \Vert_{C^{0,\alpha}_{q}}\\
& \leq C\left(\Vert E^{\perp} \Vert_{L^{2}_{q}} + \Vert E^{\perp}\Vert_{C^{0,\alpha}_{q}}\right)+C  \epsilon  \Vert u \Vert_{C^{0,\alpha}_{q}}\\
& \leq C \Vert E^{\perp}\Vert_{C^{0,\alpha}_{q}}+C  \epsilon  \Vert u \Vert_{C^{0,\alpha}_{q}}.
\end{align*}
To finish the proof, it remains to extend the supremum up to $t=0$, because then we may absorb the second term back into the left hand side of the inequality by choosing $\epsilon$ sufficiently small. This may be achieved via global Schauder estimates \cite[Theorem 4.28]{Lieberman:parPDE}
\begin{align*}
\Vert & u(t)\Vert_{C^{2,\alpha}((0,1)\times M)}\\
& \leq C \left( \sup_{s \in (0,1)} \Vert u(s,x)\Vert_{L^{2}(M)} +\epsilon \Vert u\Vert_{C^{0,\alpha}((0,1)\times M)} + \Vert E^{\perp}\Vert_{C^{0,\alpha}((0,2)\times M)}  + \Vert u(0)\Vert_{C^{2,\alpha}(M)}\right).
\end{align*}
Note that 
\begin{equation*}
u(0) = - \sum_{j\in K_{\uparrow}} \left(\int_{0}^{\infty}e^{\delta_{j}\tau} E_{j}^{\perp}(\tau) d\tau\right) \varphi_{j}.
\end{equation*}
The space $\Lambda_{\uparrow}$ is finite dimensional, so there must be a uniform constant $C>0$ so that $\Vert \varphi_{j}\Vert_{C^{2,\alpha}(M)}\leq C \Vert \varphi_{j}\Vert_{L^{2}(M)}$ for all $j\in K_{\uparrow}$. Using this we have that
\begin{align*}
\Vert u(0)\Vert_{C^{2,\alpha}(M)}^{2} & \leq C\sum_{j\in K_{\uparrow}} \left(\int_{0}^{\infty}e^{\delta_{j}\tau} E_{j}^{\perp}(\tau) d\tau\right)^{2} \Vert \varphi_{j}\Vert_{C^{2,\alpha}(M)}^{2}\\
&  \leq C\sum_{j\in K_{\uparrow}} \left(\int_{0}^{\infty}e^{\delta_{j}\tau} E_{j}^{\perp}(\tau) d\tau\right)^{2} \Vert \varphi_{j}\Vert_{L^{2}(M)}^{2}\\
& =C \Vert u(0)\Vert_{L^{2}(M)}^{2}.
\end{align*}
Using the $L^{2}_{q}$ bound obtained above, we may extend the supremum to $t\geq 0$, and conclude the desired H\"older bounds (absorbing the $C^{0,\alpha}$ norms of $u$ into the left hand side, by choosing $\epsilon$ small). 
\end{proof}

\subsection{Construction of a slowly converging flow}
\label{ConstructionSubSec}

To proceed further, we define the norm
\begin{equation*}
\Vert f\Vert _{\gamma}^{*} : = \Vert  \proj_{\Lambda_{0}} f \Vert _{C^{0,\alpha}_{1,\gamma}} + \Vert \proj_{\Lambda_{0}^{\perp}} f\Vert _{C^{2,\alpha}_{1+\gamma}}.
\end{equation*}
Recall that 
\begin{equation*}
\Vert  u \Vert _{C^{0,\alpha}_{1,\gamma}} = \sup_{t\geq 0}\left[ (T+t)^{\gamma} \Vert u(t)\Vert_{C^{0,\alpha}}\right] + \sup_{t\geq 0} \left[ (T+t)^{1+\gamma}  \Vert u'(t)\Vert_{C^{0,\alpha}}\right],
\end{equation*}
and
\begin{equation*}
\Vert u \Vert _{C^{2,\alpha}_{1+\gamma}} = \sup_{t\geq 0} \left[ (T+t)^{1+\gamma} \Vert u(t)\Vert_{C^{2,\alpha}} \right].
\end{equation*}
We emphasize that these H\"older norms are the space-time H\"older norms, defined in \eqref{eq:hold-norm}. For $\gamma$ to be specified below, we define $X$ to be the Banach space of functions $f$ with $\Vert f\Vert_{\gamma}^{*}<\infty$.
\begin{prop}
\label{prop:contract-map}
Assume that $g_{\infty}$ satisfies $AS_{p}$. We may thus fix a point where $F_{p}|_{\SS^{k-1}}$ achieves a positive maximum and denote it by $\hat v$. Define
\begin{equation*}
\varphi (t) = (T+t)^{-\frac{1}{p-2}} \left( \frac{2(N-2)}{p(p-2)F_{p}(\hat v) } \right)^{\frac{1}{p-2}} \hat v,
\end{equation*}
as in Lemma \ref{lemm:poly-decay-def-varphi}. Then, there exists $C>0$, $T>0$, $\frac{1}{p-2} < \gamma < \frac{2}{p-2}$ and $u(t) \in C^{\infty}(M \times (0,\infty))$ so that $u(t) > 0$ for all $t >0$, $g(t) : = u(t)^{N-2}g_{\infty}$ is a solution to the Yamabe Flow, and
\begin{equation*}
\Vert w^{\T}(t) + \Phi(\varphi(t)+w^{\T}(t)) + w^{\perp} (t) \Vert^{*}_{\gamma} = \Vert u(t) - \varphi(t) - 1\Vert_{\gamma}^{*} \leq C.
\end{equation*}
\end{prop}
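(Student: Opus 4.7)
The plan is to apply the Banach fixed-point theorem to a map on $X$, using Proposition~\ref{prop:slow-decay-rewrite-two-comp} to reformulate the Yamabe flow and Lemmas~\ref{lemm:ker-proj-ODE} and~\ref{lemm:ker-orthog-prob} to invert its linear parts. First I would fix $\gamma\in(1/(p-2),2/(p-2))$ avoiding the finite set $\{\mu_i/(2(N-2))\}_{i=1}^{k}$; this is possible because $p\geq 3$ makes the interval nonempty. Given $w=w^{\T}+w^{\perp}\in X$ with $\Vert w\Vert^{*}_{\gamma}\leq K$, Proposition~\ref{prop:slow-decay-rewrite-two-comp} produces error terms $E^{\T}(w)$ and $E^{\perp}(w)$. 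I then define $\mathscr{T}(w):=\tilde w^{\T}+\tilde w^{\perp}$, where $\tilde w^{\T}$ is the unique solution of \eqref{kpfEq} with right-hand side $E^{\T}(w)$ furnished by Lemma~\ref{lemm:ker-proj-ODE} subject to $\proj_{\Pi_{0}}\tilde w^{\T}(0)=0$, and $\tilde w^{\perp}$ is the unique solution of \eqref{kopfEq} with right-hand side $E^{\perp}(w)$ furnished by Lemma~\ref{lemm:ker-orthog-prob} with $q=1+\gamma$ subject to $\proj_{\Lambda_{\downarrow}}\tilde w^{\perp}(0)=0$. Those two lemmas yield
\begin{equation*}
\Vert\mathscr{T}(w)\Vert^{*}_{\gamma}\leq C\bigl(\Vert E^{\T}(w)\Vert_{C^{0,\alpha}_{1+\gamma}}+\Vert E^{\perp}(w)\Vert_{C^{0,\alpha}_{1+\gamma}}\bigr),
\end{equation*}
so the remaining task is to estimate the right-hand side in terms of $K$ and $T$.

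The next step is to verify that $\mathscr{T}$ maps a small closed ball $\overline{B_{K}}\subset X$ into itself for $K$ small and $T$ large. The upper constraint $\gamma<2/(p-2)$ ensures that the purely time-dependent source $c(T+t)^{-p/(p-2)}$ in the bound for $E^{\T}$ (the irreducible forcing coming from the ansatz $\varphi$) lies in $C^{0,\alpha}_{1+\gamma}$ with norm $\leq cT^{-(p/(p-2)-1-\gamma)}$; the lower constraint $\gamma>1/(p-2)$ ensures that the nonlinear kernel contributions $c(T+t)^{-(p-3)/(p-2)}\Vert w^{\T}\Vert^{2}$ and $c\Vert w^{\T}\Vert^{p-1}$ fall in the same weighted class. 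All remaining cross-terms contain either a factor of $\Vert w^{\perp}\Vert_{C^{2,\alpha}}$ (which already carries the target weight $(T+t)^{-1-\gamma}$) or a positive power of $(T+t)^{-1/(p-2)}$ multiplying $\Vert w\Vert_{C^{2,\alpha}}$. Inserting $\Vert w\Vert^{*}_{\gamma}\leq K$ into each summand yields, for some $\delta,\delta'>0$,
\begin{equation*}
\Vert E^{\T}(w)\Vert_{C^{0,\alpha}_{1+\gamma}}+\Vert E^{\perp}(w)\Vert_{C^{0,\alpha}_{1+\gamma}}\leq c_{0}T^{-\delta}+c_{1}T^{-\delta'}K+c_{2}K^{2}+\cdots+c_{p-1}K^{p-1}.
\end{equation*}
Choosing first $K$ small and then $T$ large gives $\Vert\mathscr{T}(w)\Vert^{*}_{\gamma}\leq K$.

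The contraction property follows from the very same bookkeeping applied to the Lipschitz estimates of Proposition~\ref{prop:slow-decay-rewrite-two-comp}: they have the same structure as the size bounds, with $\Vert w_{1}-w_{2}\Vert^{*}_{\gamma}$ playing the role of one factor of the solution norm, so a final enlargement of $T$ and shrinking of $K$ yields
\begin{equation*}
\Vert\mathscr{T}(w_{1})-\mathscr{T}(w_{2})\Vert^{*}_{\gamma}\leq\tfrac{1}{2}\Vert w_{1}-w_{2}\Vert^{*}_{\gamma}.
\end{equation*}
Let $w_{\infty}$ denote the resulting fixed point. Proposition~\ref{prop:slow-decay-rewrite-two-comp} then guarantees that $u(t):=\Psi(\varphi(t)+w_{\infty}^{\T}(t))+w_{\infty}^{\perp}(t)$ yields a metric $g(t)=u(t)^{N-2}g_{\infty}$ solving the Yamabe flow; positivity of $u$ for $T$ large follows from $\Vert u-1\Vert_{C^{0}}\leq C(T+t)^{-1/(p-2)}$, smoothness in space-time is standard parabolic bootstrap from \eqref{YFMainEq}, and the conclusion $\Vert u-\varphi-1\Vert^{*}_{\gamma}\leq C$ is exactly the fixed-point bound $\Vert w_{\infty}\Vert^{*}_{\gamma}\leq K$. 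The most delicate part of the proof is the opening choice of $\gamma$: one must check that each of the many terms in the two error estimates of Proposition~\ref{prop:slow-decay-rewrite-two-comp} is simultaneously compatible with the single weight $(T+t)^{-1-\gamma}$ built into the target norm, and it is precisely this compatibility that forces the Adams--Simon rate $\varphi\sim(T+t)^{-1/(p-2)}$ to be the correct leading-order ansatz in Lemma~\ref{lemm:poly-decay-def-varphi}.
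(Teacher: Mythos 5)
Your proposal is correct and follows essentially the same route as the paper: fix $\gamma\in(\tfrac{1}{p-2},\tfrac{2}{p-2})$ away from the eigenvalue ratios, define the map by solving \eqref{kpfEq} and \eqref{kopfEq} via Lemmas \ref{lemm:ker-proj-ODE} and \ref{lemm:ker-orthog-prob} with the stated initial conditions, and close a contraction in $\Vert\cdot\Vert_{\gamma}^{*}$ using the error bounds of Proposition \ref{prop:slow-decay-rewrite-two-comp}. The only cosmetic difference is that you shrink the ball radius $K$ as well as enlarge $T$, whereas the paper fixes the unit ball and absorbs the nonlinear terms purely by taking $T$ large (each such term carries a negative power of $T+t$ once the weights are inserted); both versions of the bookkeeping work.
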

\begin{proof}
We fix $\frac{1}{p-2} < \gamma < \frac{2}{p-2}$ so that $\gamma \not\in\{\frac{\mu_{1}}{2(N-2)},\ldots,\frac{\mu_{k}}{2(N-2)}\}$. By Proposition \ref{prop:slow-decay-rewrite-two-comp}, it is enough to solve
\begin{equation*}
\begin{split}
2(N-2)(w^{\T})' +D^{2} F_{p}(\varphi) w^{\T} & = E^{\T}(w)\\
(w^{\perp}) ' - \cL_{\infty}w^{\perp} & = E^{\perp}(w),
\end{split}
\end{equation*}
for $w(t)$ with $\Vert w\Vert _{\gamma}^{*}  < C$. To do so, we will use the contraction mapping method. We define a map 
\begin{equation*}
S:\{ w \in X : \Vert w\Vert _{\gamma}^{*} \leq 1\} \to X =\{w :  \Vert w\Vert _{\gamma}^{*} < \infty\},
\end{equation*}
by defining $u := \proj_{\Lambda_{0}} S(w)$ to be the solution of 
\begin{equation*}
2(N-2)u' +D^{2} F_{p}(\varphi) u  = E^{\T}(w),
\end{equation*} and $ v := \proj_{\Lambda_{0}^{\perp}} S(w)$ to be the solution of 
\begin{equation*}
v ' - \cL_{\infty}v  = E^{\perp}(w).
\end{equation*}
From this, we have defined the map $S(w)$ by its orthogonal projection onto $\Lambda_{0}$ and $\Lambda_{0}^{\perp}$. These solutions exist, in the right function
spaces, by combining the bounds for the error terms in Proposition \ref{prop:slow-decay-rewrite-two-comp} with Lemmas \ref{lemm:ker-proj-ODE} and \ref{lemm:ker-orthog-prob}. Furthermore, we have the explicit bound 
\begin{align*}
\Vert \proj_{\Lambda_{0}}S(w)\Vert _{C^{0,\alpha}_{1,\gamma}} &  \leq c \Vert E^{\T}(w)\Vert _{C^{0,\alpha}_{1+\gamma}}\\
& \leq c \sup_{t\geq 0} (T+t)^{1+\gamma} ( (T+t)^{-1-\frac {1}{p-2}}+\Vert w^{\T} \Vert_{C^{0,\alpha}}^{p-1} + \Vert w^{\perp}\Vert_{C^{2,\alpha}} ) ( (T+t)^{-\frac {1}{p-2}}+ \Vert w \Vert_{C^{2,\alpha}} ) \\
& \qquad + c \sup_{t\geq 0}((T+t)^{\gamma-\frac{2}{p-2}} +  (T+t)^{\gamma-\frac{1}{p-2}} \Vert w^{\T}\Vert _{C^{2,\alpha}})\\
& \qquad   + c\sup_{t\geq0}((T+t)^{\gamma+\frac{1}{p-2}}\Vert w^{\T}\Vert ^{2}_{C^{2,\alpha}}+ (T+t)^{1+\gamma} \Vert w^{\T}\Vert _{C^{2,\alpha}}^{p-1} ) \\
& \qquad + c \sup_{t\geq0} (T+t)^{1+\gamma}  \Vert w^{\perp} \Vert_{C^{2,\alpha}}^{2}\\
& \leq c\left(T^{\gamma-\frac{2}{p-2}} + \left(T^{-\frac{1}{p-2}} + 
T^{(p-2)\left( \frac{1}{p-2} - \gamma\right)}\right) \Vert w\Vert _{\gamma}^{*}\right).
\end{align*}
Here, we have absorbed powers of $(T+t)$ into the various $w$ norms, and bounded this by $\Vert w\Vert_{\gamma}^{*}$. Note that the $w^{\perp}$ terms in $\Vert w\Vert_{\gamma}^{*}$ are multiplied by $(T+t)^{1+\gamma}$, but the $w^{\T}$ term in $\Vert w\Vert_{\gamma}^{*}$ is only multiplied by $(T+t)^{\gamma}$, so we cannot absorb as high of a power of $(T+t)$ into it (fortunately, the $w^{\T}$ terms are all raised to a large power, or already multiplied by an appropriately decaying power of $(T+t)$, as is easily checked). In the last step, we have used the bound
\begin{equation*}
\Vert w^{\T}\Vert_{C^{2,\alpha}((t,t+1)\times M)} \leq c\left( \Vert w^{\T}\Vert_{C^{0,\alpha}((t,t+1)\times M)} + \Vert (w^{\T})'\Vert_{C^{0,\alpha}((t,t+1)\times M)} \right).
\end{equation*}
This is a consequence of the fact that $\Lambda_{0}$ is finite dimensional (so any two norms on it are uniformly equivalent) and that the parabolic\footnote{We emphasize that the \emph{space-time} $C^{k,\alpha}$-norms on $\Lambda_{0}$ are \emph{not} all uniformly equivalent. This is due to the fact that (as usual) the time dependence of the functions turns the space into an infinite dimensional vector space. In the asserted inequality, we have used the fact that the \emph{spatial} $C^{k,\alpha}$-norms of any element in $\Lambda_{0}$ are all equivalent to any other $C^{k',\alpha'}$. The asserted inequality follows from this, along with the fact that in the space-time $C^{2,\alpha}$-norm, there is at most one single time derivative (which does not come with any spatial derivatives).} $C^{2,\alpha}$ H\"older norms only contain at most one time derivative (which does not come paired with any spatial derivatives). Similarly,
\begin{align*}
 \Vert \proj_{\Lambda_{0}^{\perp}} S(w) \Vert _{C^{2,\alpha}_{1+\gamma}}  & \leq \Vert E^{\perp}(w)\Vert _{C^{0,\alpha}_{1+\gamma}}\\
& \leq c \sup_{t\geq 0} (T+t)^{1+\gamma} ( (T+t)^{-1-\frac {1}{p-2}}+\Vert w^{\T} \Vert_{C^{0,\alpha}}^{p-1} + \Vert w^{\perp}\Vert_{C^{2,\alpha}} ) ( (T+t)^{-\frac {1}{p-2}}+ \Vert w \Vert_{C^{2,\alpha}} ) \\
& \qquad + c\sup_{t\geq0} \left[((T+t)^{1+\gamma-\frac{1}{p-2}} + (T+t)^{1+\gamma} \Vert w\Vert _{C^{2,\alpha}}) \Vert w^{\perp}\Vert _{C^{2,\alpha}} \right]\\
& \qquad + c\sup_{t\geq0}\left[((T+t)^{1+\gamma-\frac{1}{p-2}} + (T+t)^{1+\gamma} \Vert w\Vert _{C^{2,\alpha}})((T+t)^{-1-\frac{1}{p-2}} + \Vert w'\Vert _{C^{0,\alpha}})\right]\\
& \leq c\left(T^{\gamma-\frac{2}{p-2}} + \left(T^{-\frac{1}{p-2}} + T^{(p-2)\left( \frac{1}{p-2} - \gamma\right)}\right) \Vert w\Vert _{\gamma}^{*}\right).
\end{align*}
Thus, because $\gamma \in \left( \frac{1}{p-2},\frac{2}{p-2}\right)$, by choosing $T$ large enough, we can ensure that $S$ maps $\{ w : \Vert w\Vert _{\gamma}^{*} \leq 1\} \subset X$ into itself. Finally, we check that we can guarantee that $S$ is a contraction mapping by taking $T$ even larger if necessary. The following inequalities are proven by the same argument we have just used
\begin{align*}
\Vert \proj_{\Lambda_{0}}S(w_{1}) - \proj_{\Lambda_{0}}S(w_{2})\Vert _{C^{0}_{1,\gamma}} &\leq c \left(T^{-\frac{1}{p-2}} + T^{(p-2)\left( \frac{1}{p-2} - \gamma\right)}\right) \Vert w_{1}-w_{2}\Vert _{\gamma}^{*}\\
\Vert \proj_{\Lambda_{0}^{\perp}}S(w_{1}) - \proj_{\Lambda_{0}^{\perp}}S(w_{2})\Vert _{C^{2,\alpha}_{1+\gamma}} &\leq c \left(T^{-\frac{1}{p-2}} + T^{(p-2)\left( \frac{1}{p-2} - \gamma\right)}\right)  \Vert w_{1}-w_{2}\Vert _{\gamma}^{*}.
\end{align*}
Thus, by enlarging $T$ if necessary, we have that $S$ is a contraction map. This finishes the proof. 
\end{proof}

We now show how the previous proposition yields solutions converging at exactly a polynomial rate. 

\begin{proof}[Proof of Theorem \ref{theo:gen-slow-conv}]
From Propostion \ref{prop:slow-decay-rewrite-two-comp}, we have constructed $\varphi(t)$ and $u(t)$ so that \begin{equation*}
\varphi (t) = (T+t)^{-\frac{1}{p-2}} \left( \frac{2(N-2)}{p(p-2)F_{p}(\hat v) } \right)^{\frac{1}{p-2}} \hat v,
\end{equation*}
$u(t)^{N-2}g_{\infty}$ is a solution to the Yamabe flow, and
\begin{equation*}
u(t) = 1+\varphi(t) + \tilde w(t) : = 1+\varphi(t) +w^{\T}(t) + \Phi(\varphi(t)+w^{\T}(t)) + w^{\perp} (t),
\end{equation*}
where $\tilde w(t)$ satisfies (in particular) $\Vert \tilde w\Vert_{C^{0}} \leq C (1+t)^{-\gamma}$ for some $C>0$ and all $t\geq 0$. We have arranged that $\gamma > \frac{1}{p-2}$, which implies that $\varphi(t)$ is decaying slower than $\tilde w(t)$.
Thus
\begin{equation*}
\Vert u(t) - 1\Vert_{C^{0}}\geq C(1+t)^{-\frac{1}{p-2}}
\end{equation*}
as $t\to\infty$. From this, the assertion follows. 
\end{proof}

\section{Examples satisfying $AS_{p}$} \label{sec:examp}

In this section we provide explicit examples of metrics which satisfy $AS_{p}$ for both $p=3$ and $p\geq 4$. This allows us, via Theorem \ref{theo:gen-slow-conv}, to conclude the existence of slowly converging Yamabe flows. 

\subsection{Examples which satisfy $AS_{3}$} In this subsection we prove Proposition \ref{prop:CPn-example}. Suppose that we are given integers $n,m > 1$ and a closed $m$-dimensional Riemannian manifold $(M^{m},g_{M})$ with constant scalar curvature $R_{g_{M}} \equiv 4(n+1)(m+n-1)$. We denote the complex projective space equipped with the Fubini--Study metric (our normalization of the Fubini--Study metric is as follows: we define $\CC P^{n}$ to be the metric induced by the Riemannian submersion from the unit radius sphere $\SS^{2n+1}\to \CC P^{n}$) by $(\CC P^{n},g_{FS})$. We will show that the product metric $(M^{m}\times \CC P^{n}, g_{M} \oplus g_{FS})$ is a degenerate critical point satisfying $AS_{3}$. Recall that this implies that the metric is non-integrable, by Lemma \ref{lemm:int-imp-F-zero}.

Write $g:=  g_{M} \oplus g_{FS}$. Because $R_{g_{FS}} = 4n(n+1)$ \cite[p.\ 86]{Petersen} it follows that the scalar curvature of $g$ satisfies
$R_{g} = 4(n+1)(m+n-1)+4n(n+1) = 4(n+1)(m+2n-1).$
The dimension of $M^{m}\times \CC P^{n}$ is $m +2n$, so $\Lambda_{0}$ consists of eigenfunctions of $\Delta_{g}$ with eigenvalue $\frac{R_{g}}{m+2n-1} = 4(n+1)$. Because $\lambda_{1}(g_{FS}) = 4(n+1)$ \cite[Proposition C.III.1]{BGM}, we see that $(M^{m}\times \CC P^{n},g)$ is degenerate; for any first eigenfunction $v$ on $\CC P^{n}$, the function $1\otimes v$ on $M^{m}\times \CC P^{n}$ will be an eigenfunction of $\Delta_{g}$ with eigenvalue $4(n+1)$. 

The eigenfunctions of $\Delta_{g_{FS}}$ may be explicitly constructed by considering polynomials on $\CC^{n}$ which are homogeneous of degree $k$ in both $z$ and $\overline z$ and which are harmonic. These polynomials restrict to $\SS^{2n+1}$ and are invariant under the natural $\SS^{1}$ action, so they thus descend to the quotient. This is described in detail in \cite[Proposition C.III.1]{BGM}. By a recent observation of Kr\"oncke, \cite[p.\ 25]{Kroncke:2013fk}, the harmonic polynomial
$
h(z,\overline z) : = z_{1}\overline z_{2} + z_{2}\overline z_{1} +z_{2}\overline z_{3} + z_{3}\overline z_{2}  + z_{3}\overline z_{1} + z_{1}\overline z_{3},$
defined on $\CC^{n+1}$ for $n\geq 2$, descends to a first eigenfunction $v$ of $\Delta_{g_{FS}}$ for which $\int_{\CC P^{n}} v^{3} dV_{g_{FS}}\not = 0$. 
The function $1\otimes v$ is an eigenfunction of $\Delta_{g}$ with eigenvalue $4(n+1)$, so it is an element of $\Lambda_{0}$. Moreover, by Fubini's theorem,
$\int_{M^{m}\times \CC P^{n}} (1\otimes v)^{3} dV_{g} = \vol(M^{m},g_{M}) \int_{\CC P^{n}} v^{3} dV_{g_{FS}} \not = 0.
$ 
Thus, we see that $(M^{m}\times \CC P^{n},g)$ is degenerate and by \eqref{eq:F3-comp}, the function $F_{3}$ is not everywhere zero on $\Lambda_{0}$. This shows that $(M^{m}\times \CC P^{n},g)$ satisfies $AS_{3}$, as claimed.

\subsection{Examples satisfying $AS_{p}$ for $p\geq 4$}
This subsection is devoted to the detailed study of the Yamabe problem on $\SS^{1}(R)\times\SS^{n-1}$. 
Our goal is to obtain examples in any dimension $n\ge3$ of a non-integrable critical point of $\cY$ which satisfies the condition $AS_{p}$ for some $p\geq 4$, as defined in Definition \ref{defi:ASp}. 
The study involves properties of a certain \emph{period function} $\tau(\alpha)$ defined below. 
Here, we start by giving an overview of Schoen's discussion \cite{Schoen:Montecatini}, supplying 
detailed proofs.
The main new observation is that these facts imply the existence of a constant
scalar curvature metric satisfying the assumptions of Theorem \ref{theo:gen-slow-conv}.
We observe that the same ODE which we analyze has been considered, from a different perspective, in \cite{MPU} where the authors analyze moduli spaces of singular Yamabe metrics.

\subsubsection{An ODE parametrizing all solutions of the Yamabe problem}

We consider a one-parameter family of conformal classes $[g_{T}]$ on $\SS^{1}\times \SS^{n-1}$ represented by the natural product metric  $\SS^{1}(T/2\pi)\times \SS^{n-1}(1)$ (here $\SS^{k}(r)$ is the $k$-sphere of radius $r$ in $\RR^{k+1}$). We will write $t$ for the coordinate on $\SS^{1}(T/2\pi)$.

\begin{prop}[\cite{Schoen:Montecatini}]\label{prop:schoen-analysis}
Let $u_0=\left( \frac{n-2}{n}\right)^{\frac{n-2}4} = \left(\frac 2 N\right)^{\frac{1}{N-2}}$. Then, exists a map $\tau: (u_{0},1)\to \RR_{>0}$ which parametrizes solutions to the Yamabe problem on $\SS^{1}\times \SS^{n-1}$ in the following sense: For a given $T>0$, up to scaling the conformal factor, the complete list of constant scalar curvature metrics in $[g_{T}]$ is (1) the product metric and (2) a metric of the form $u(t)^{N-2}g_{T}$ where $u(t)$ solves the ODE
\begin{equation*}
4u''-(n-2)^{2}u + {n(n-2)} u^{\frac{n+2}{n-2}} = 0,
\end{equation*}
with initial conditions $(u(t_{0}),u'(t_{0})) = (\alpha,0)$ for some $t_{0} \in \SS^{1}(T/2\pi)$. Here, $\alpha \in (u_{0},1)$ is any solution of $\tau(\alpha) = \frac{T}{k} $ with $k$ an arbitrary positive integer.
\end{prop}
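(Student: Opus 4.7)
The plan is to reduce the constant scalar curvature equation in $[g_T]$ first to a semilinear elliptic PDE on $\SS^1 \times \SS^{n-1}$, then by a symmetry argument to an ODE in $t$, and finally to read off the classification from the phase portrait of this ODE.

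Writing a conformal metric as $u^{N-2}g_T$ with $u > 0$, the conformal change formula $R_{u^{N-2}g_T} = u^{1-N}\bigl(R_{g_T}u - (N+2)\Delta_{g_T}u\bigr)$ recorded in Section~\ref{sec:def-prelim} converts $R_{u^{N-2}g_T} \equiv c$ into
$$-\tfrac{4(n-1)}{n-2}\Delta_{g_T}u + (n-1)(n-2)\,u = c\,u^{N-1},$$
using $R_{g_T} = (n-1)(n-2)$. A constant rescaling $u \mapsto \lambda u$ scales $c$ by $\lambda^{2-N}$, so I normalize $c = n(n-1)$. To see that every positive solution depends only on $t$, I pass to the universal cover $\RR \times \SS^{n-1}$, which is conformally equivalent (via $r = e^{t}$) to $(\RR^n \setminus \{0\}, g_{\Euc})$; solutions descending to $\SS^1(T/2\pi) \times \SS^{n-1}$ correspond to positive solutions of the Euclidean conformal Yamabe equation on $\RR^n \setminus \{0\}$ invariant under the dilation $x \mapsto e^T x$. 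By the Caffarelli--Gidas--Spruck classification (in the form used in \cite{Schoen:Montecatini}), any such solution is radial in $|x|$, which translates back to $u$ depending only on $t$. Substituting $u = u(t)$ and using $\Delta_{g_T}u = u''$ produces the ODE in the statement.

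This ODE is Hamiltonian with conserved energy $H(u,u') = \tfrac12(u')^2 + V(u)$ and potential
$$V(u) = \tfrac{(n-2)^2}{8}\bigl(u^N - u^2\bigr).$$
A direct computation yields $V(0) = V(1) = 0$, $V'(u_0) = 0$, $V''(u_0) > 0$, $V(u_0) < 0$, and $V''(0) < 0$, so the only equilibria in the phase plane are the saddle $(0,0)$ and the center $(u_0, 0)$, and the latter is surrounded by a one-parameter family of closed orbits contained in the strip $\{0 < u < 1\}$. Each such orbit meets the $u$-axis in exactly two turning points $u_1(\alpha) < u_0 < \alpha$ with $V(u_1(\alpha)) = V(\alpha)$, giving a natural bijection between closed orbits and $\alpha \in (u_0, 1)$. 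I then define $\tau(\alpha)$ to be the minimal period of the corresponding orbit; a standard computation via the energy identity shows $\tau\colon (u_0,1) \to \RR_{>0}$ is smooth.

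Finally, a non-equilibrium solution of the ODE on $\RR$ descends to a smooth positive function on $\SS^1(T/2\pi)$ (whose circumference equals $T$) if and only if its minimal period $\tau(\alpha)$ divides $T$, i.e., $\tau(\alpha) = T/k$ for some positive integer $k$ (counting the number of times the orbit wraps around the circle); each choice of initial time $t_0$ reparametrizes the same metric, which is the ambiguity already noted in the statement. This produces family (2). The equilibrium $u \equiv u_0$, after undoing the scaling normalization, recovers the product metric, giving family (1). The main technical obstacle is the symmetry reduction; the subsequent phase-plane analysis is routine.
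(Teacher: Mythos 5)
Your proposal is correct and follows the same overall strategy as the paper's proof: invoke the Caffarelli--Gidas--Spruck symmetry result (via the conformal equivalence $\RR\times\SS^{n-1}\simeq\RR^n\setminus\{0\}$) to reduce the constant scalar curvature equation to an ODE in $t$, normalize to $c = n(n-1)$, and read off the classification from the Hamiltonian phase portrait. The one place you diverge is in the phase-plane analysis itself. The paper constructs the explicit homoclinic solution $u_1(t)=(\cosh t)^{-(n-2)/2}$ by pulling back the round metric of $\SS^n$, uses the resulting orbit and the origin to bound a flow-invariant region $\Omega$, and then proves two separate claims---that any positive periodic orbit must lie in $\Omega$, and that every orbit starting on the $u$-axis inside $\Omega$ is either the equilibrium or a closed orbit nested with the others---via a case analysis supplemented by the conservation law. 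You instead determine the entire orbit structure directly from the shape of the potential $V(u)=\frac{(n-2)^2}{8}(u^N-u^2)$: since $V$ has a nondegenerate minimum at $u_0$ with $V(u_0)<0=V(0)=V(1)$, the level sets $H=c$ for $c\in(V(u_0),0)$ are closed curves in the strip $\{0<u<1\}$, the level set $c=0$ is the homoclinic cycle, and no positive periodic orbit can live at energy $c\ge 0$. This is a modest streamlining that bypasses the explicit form of $u_1$ and folds the paper's two claims into one observation about the potential well; the paper's version has the advantage of naming $\partial\Omega$ explicitly, and both arguments ultimately rest on the same conserved Hamiltonian. (One point worth spelling out a bit more, which the paper's case analysis handles explicitly: for energy $c\ge 0$ a trajectory starting at a turning point $u_+(c)>1$ reaches $u=0$ in finite time, so there are indeed no further periodic orbits in the half-plane $u>0$; your sentence asserting the family is ``contained in the strip'' is correct but would benefit from this one-line justification.)
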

\begin{proof}
We will follow Schoen and look for solutions to the Yamabe problem with constant scalar curvature $n(n-1)$ (equal to that of the unit sphere), and in doing so we drop the volume constraint. A crucial observation is that by a result of Caffarelli--Gidas--Spruck (following the classical work of Gidas--Ni--Nirenberg), a constant scalar curvature metric in $[g_{T}]$ must have conformal class only depending on the $\SS^{1}$-variable $t$ (see \cite{GidasNiNirenberg,CaffarelliGidasSpruck}). As such, this reduces the problem to studying an ODE rather than a PDE. 

It will be convenient to lift the analysis to the universal cover $\RR\times \SS^{n-1}$ and use $(t,\xi)\in\RR\times\RR^n$
with $|\xi|=1$ as coordinates. 
In particular, we will forget about $g_{T}$ for now, and consider instead the metric 
$g=dt^{2}+ g_{\SS^{n-1}(1)}
$ 
on $\RR\times \SS^{n-1}$. 
Then, a solution to the Yamabe Problem in $[g_{T}]$ will correspond to a function $u(t)$ on $\RR\times \SS^{n-1}$ (depending only on the first factor) with period $T$ in $t$ for which $u(t)^{\frac{4}{n-2}}g$ has constant scalar curvature $n(n-1)$. 

Now, $u(t)^{\frac{4}{n-2}}g$ having constant scalar curvature $n(n-1)$ is equivalent to the ODE 
\begin{equation}\label{eq:YP-t}
4u''-(n-2)^{2}u + {n(n-2)} u^{\frac{n+2}{n-2}} = 0,
\end{equation}
as $R_{g} = R_{g_{\SS^{n-1}(1)}} = (n-1)(n-2)$ and
$n(n-1)=R_{u^{N-2}g} = -(N+2) u^{-\frac{n+2}{n-2}} \left(u''- \frac{1}{N+2} R_{g}u \right).
$ 
There is one obvious solution to \eqref{eq:YP-t} given by the constant $u(t) \equiv u_{0} = \left( \frac{n-2}{n}\right)^{\frac{n-2}4}$.
This simply corresponds to the rescaling of $g_{T}$ so that it has scalar curvature $n(n-1)$, as desired. 

There is a second explicit solution to \eqref{eq:YP-t} obtained by considering $\RR\times \SS^{n-1}$ as the coordinate patch 
of $\SS^{n}$ given by $\SS^{n}-\{N,S\}$, the sphere minus two antipodal points. 
The restriction of the standard metric on $\SS^{n}$ to $\SS^{n}-\{N,S\}$
(which has scalar curvature $n(n-1)$)
then produces a solution to  \eqref{eq:YP-t}
as long as we can check that this metric is conformally related to $g$. To see this, notice that the map $\Psi: (\RR\times \SS^{n-1} ,g)\to 
(\RR^{n}-\{0\},g_{\Euc})$, $(t,\xi) \mapsto e^{t}\xi$ is conformal, because
\begin{equation*}
\Psi^{*} g_{\Euc} = \Psi^{*}\left( dr^{2} + r^{2} g_{\SS^{n-1}}\right) = e^{2t} dt^{2} + e^{2t} g_{\SS^{n-1}} = e^{2t} g,
\end{equation*}
where $g_{\Euc}$ denotes the Euclidean metric on $\RR^{n}\setminus\{0\}$).
On the other hand, by stereographic projection the spherical coordinate patch on $\RR^{n}-\{0\}$ has the metric
\begin{equation*}
g_{\SS^{n}} = \frac{4 g_{\Euc}}{(1+r^{2})^{2}}, \quad
\hbox{where\ }  r=\left|x\right|, 
\quad \hbox{with $x\in\RR^n\setminus\{0\}$}.
\end{equation*}
Thus
$\Psi^{*}(g_{\SS^{n}}) = \frac{4e^{2t}}{(1+e^{2t})^{2}} g = (\cosh t)^{-2} g.$
Therefore, we have another solution to \eqref{eq:YP-t} given by 
$u_{1}(t) = (\cosh t)^{-(n-2)/2}.$
Of course, the metric $u_{1}(t)^{{4}/{(n-2)}} g$ does not descend to the quotient $\SS^{1}(T/2\pi)\times \SS^{n-1}$ 
(and it is not even a complete metric on $\RR\times \SS^{n-1}$) but it will prove useful in the sequel.

By setting $v = \frac{du}{dt}$,
\eqref{eq:YP-t} can be converted to a first order system 
\begin{equation}
\label{eq:YP-system}
\frac{d}{dt} (u,v) = X(u,v),
\end{equation}
where the vector field $X$ on the 
$uv$-plane is defined by 
$X(u,v) = \left( v ,  \frac{{(n-2)^{2}} u - {n(n-2)}u^{\frac{n+2}{n-2}}}{4}\right).$
We note that the second component of $X$ is negative when $u > u_{0}$ and positive when $u< u_{0}$. From the above analysis, we know that 
\begin{equation}
(u(t),v(t)) = (u_{1}(t), u_{1}'(t)) = \left( (\cosh t)^{-\frac{n-2}{2}}, \left(\frac{1}{4}-\frac n8 \right) \frac{\sinh t}{(\cosh t)^{\frac n 2}}\right)
\end{equation}
is a solution to \eqref{eq:YP-system}. Letting $t=0$, we see that $(1,0)$ is on this integral curve. Additionally, 
letting $t \to \pm \infty$ 
(note here
that $n\geq 3$), we see that the curve tends to $(0,0)$. Thus, the orbit associated to $u_{1}$, along with $(0,0)$ encloses a region $\Omega$ with compact closure $\overline\Omega$ and such $\Omega$ is invariant under the flow since its boundary is a homoclinic cycle (i.e., a trajectory that limits to the same critical point at $t=\pm\infty$).

\begin{claim}
Any periodic solution with $u > 0$ for all time must lie inside 
$\Omega$. 
\end{claim}

\begin{proof}
By the previous comments, it suffices to consider a trajectory $\gamma(t)=(u(t),v(t))$ in $\RR^2\setminus\Omega$.
Observe that 
\begin{equation}
\label{vdotEq}
4v'(t)=n(n-2)u(u_0^{N-2}-u^{N-2});
\end{equation}
thus, whenever $u(t)>u_0$ then $v'(t)<0$.
We divide the proof into two cases.

{\it Case 1: $u(0)>u_0$.}
In this case, since $\gamma(t)$ is defined globally for $t\in\RR$ and is periodic we claim that there exists $O_0$ such that $u(O_0)=u_0$, and $v'(t)<0$ for $t\in(0,O_0)$. Indeed, if it were not the case monotonicity of the second component of $\gamma$ should immediately imply that the corresponding trajectory of $\gamma(\cdot)$ be unbounded, contradicting the periodicity assumption.

As a result, clearly $v(O_{0})<0$ hence the system implies $u'(O_0)<0$ and thus it follows that $v(t)>v(O_0)$ for $t>O_0$. Since $\gamma$ is global, must remain
in the right half space (as $u>0$) and by monotonicity of the second component it follows that $\gamma$ must approach $(0,0)$ as 
$t\nearrow\infty$. But then it is not periodic. (Note that in fact, by time-reversal symmetry, $\gamma$ should be a homoclinic cycle.)

{\it Case 2: $u(0)<u_0$.} We can assume $v(0)>0$ since otherwise we reduce to the last part of the proof of Case 1. Thus $v(0)>0$, therefore $u'(0)>0$ and $v'(0)>0$. It follows
once again that $\gamma$ crosses the vertical line $u=u_0$, and then we are in Case 1.
\end{proof}

\let\al=\alpha
\def\Im{\hbox{Im}\,}

\begin{claim}
Suppose that $\gamma_\al(t)=(u(t),v(t))$ solves \eqref{eq:YP-system}, and that 
$\gamma_\al(0)=(\alpha,0)\in\Omega$. Then either 
$\gamma_\al\equiv(u_0,0)$ or $\gamma_\al$ is a smooth periodic orbit
contained in $\Omega-\partial\Omega$. 
Furthermore, if $u_0\le\alpha_1<\alpha_2$, then $\gamma_{\al_1}$ is enclosed by $\gamma_{\al_2}$.
\end{claim}

\begin{proof}
For the first statement, let us start by observing that (by compactness of $\overline{\Omega}$) any such solution $\gamma_{\al}(t)$ must be defined for all times and $\gamma_{\al}(t)=\gamma_{\al}(-t)$ for all $t\in\RR$ by means of a standard ODE uniqueness argument (as $X(u,v)=X(u,-v)$).  
To proceed further, let us recall that the flow \eqref{eq:YP-system} is generated by the Hamiltonian (see \ \cite[(2.3)]{MPU})
\begin{equation}\label{HamEq}
H(u,v)=2{v^2}+\frac{(n-2)^2(u^N-u^2)}{2}.
\end{equation}
The corresponding conservation law (together with the fact that $H(u,0)=0$ implies $u\in\{0,1\}$) rules out the existence of solutions $\gamma_{\al}$ such that $\lim_{t\to +\infty}\gamma_{\al}(t)=(0,0)$ (and thus $\lim_{t\to -\infty}\gamma_{\al}(t)=(0,0)$ as well) or $\lim_{t\to +\infty}\gamma_{\al}(t)=(1,0)$ (and thus $\lim_{t\to -\infty}\gamma_{\al}(t)=(1,0)$) whenever $\alpha\in(0,1)$. 
Then the first claim follows from the fact that $\gamma_\alpha$ for $\alpha\in(0,1)$ must intersect the $u$-axis (exactly) twice, and away from the zeros of the vector field $X$. 
Then by the time-reversal symmetry we conclude that $\gamma_\al$ must be periodic, hence also smooth. These arguments show in particular that $\Im\gamma_\al$ for such $\al$ is diffeomorphic to $\SS^1$. Uniqueness of solutions to ODEs then implies the last claim.
\end{proof}

Finally, this allows us to conclude the general classification of constant scalar curvature metrics in $[g_{T}]$. Any constant curvature metric must depend only on the $t$ variable and thus lift to a solution $u(t)$ to the ODE \eqref{eq:YP-t} on $\RR$. The solution must periodic of period $\frac{T}{k}$ for some positive integer $k$, as the metric descends to $\SS^{1}(T/2\pi) \times \SS^{n-1}$. Thus, by the above claim, there exists $\alpha \in (u_{0},1)$ so that $u(t)$ solves the ODE with initial conditions $(\alpha,0)$ (after possibly shifting $u(t)$ by a rotation of $\SS^{1}$). Of course, if $k>1$, then what we mean here is that the conformal factor on $\SS^{1}(T/2\pi)\times \SS^{n-1}$ is $u(t)$ concatenated $k$ times. By definition $\tau(\alpha) = \frac{T}{k}$. This completes the proof of the claim.
\end{proof}

\subsubsection{The period function}
\label{PeriodSubsec}

\begin{lemma}[\cite{Schoen:Montecatini}]
\label{LimitAtInftyLemma}
The period function $\tau(\alpha)$ is continuous on the interval $(u_{0},1)$. Furthermore, it satisfies
(i) $\lim_{\alpha\nearrow 1} \tau(\alpha) = +\infty$ and (ii) $\lim_{\alpha\searrow u_{0}} \tau(\alpha) = (n-2)^{-1/2}2\pi := T_{0}$.
\end{lemma}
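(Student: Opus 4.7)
My plan uses the Hamiltonian formulation \eqref{HamEq}. For $\alpha\in(u_0,1)$, every orbit $\gamma_\alpha$ is the level curve $\{H=c_\alpha\}$ through $(\alpha,0)$, with $c_\alpha:=V(\alpha)$ and $V(u):=(n-2)^2(u^N-u^2)/2$. Such an orbit oscillates between $u=\alpha$ and $u=u_-(\alpha)$, the unique other solution of $V(u)=c_\alpha$ in $(0,u_0)$. Solving energy conservation for $v$ and integrating $dt=du/v$ gives the period formula
\begin{equation*}
\tau(\alpha)\;=\;2\sqrt{2}\int_{u_-(\alpha)}^{\alpha}\frac{du}{\sqrt{c_\alpha-V(u)}},
\end{equation*}
which is the object I will analyze.

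For continuity on $(u_0,1)$, observe that $v'(0)=-V'(\alpha)/4\neq 0$ since $\alpha\neq u_0$, so $\gamma_\alpha$ crosses the section $\Sigma:=\{v=0,\ u>u_0\}$ transversally at $(\alpha,0)$. Smooth dependence of the flow of the smooth vector field $X$ on initial data, combined with the implicit function theorem applied to the first-return map to $\Sigma$, yields $\tau\in C^\infty((u_0,1))$, which is more than continuity.

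For the limit as $\alpha\nearrow 1$, the point $(1,0)$ lies on the homoclinic orbit $\partial\Omega\setminus\{(0,0)\}$ parametrized by $u_1(t)=(\cosh t)^{-(n-2)/2}$; linearizing $X$ at $(0,0)$ gives eigenvalues $\pm(n-2)/2$, so the origin is a hyperbolic saddle. For any fixed $T>0$, continuous dependence of the flow on $[0,T]$ forces $\gamma_\alpha$ to shadow the homoclinic on this interval once $\alpha$ is sufficiently close to $1$, so $\gamma_\alpha$ cannot close up before time $T$, proving $\tau(\alpha)\to\infty$. Equivalently, one may pass to the limit in the period integral and observe that $u_-(\alpha)\to 0$ while the integrand develops a non-integrable $1/u$ singularity at the origin, since $c_1=V(1)=0$ forces $c_\alpha-V(u)\sim(n-2)^2u^2/2$ near $u=0$.

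The limit as $\alpha\searrow u_0$ is the technical heart of the lemma. A direct computation using $u_0^{N-2}=(n-2)/n$ yields $V'(u_0)=0$ and $V''(u_0)=4(n-2)$, so the linearization of \eqref{eq:YP-t} at $u_0$ is $w''=-(n-2)w$, of period exactly $T_0=2\pi/\sqrt{n-2}$. To transfer this to $\tau$ itself, I set $\varepsilon:=\alpha-u_0>0$ and substitute $u=u_0+\varepsilon s$. The Taylor expansion
\begin{equation*}
c_\alpha-V(u_0+\varepsilon s)\;=\;2(n-2)\varepsilon^2(1-s^2)+\varepsilon^3 R(\varepsilon,s),
\end{equation*}
with $R$ bounded uniformly on compact sets, turns the period integral into
\begin{equation*}
\tau(\alpha)\;=\;2\sqrt{2}\int_{s_-(\varepsilon)}^{1}\frac{ds}{\sqrt{2(n-2)(1-s^2)+\varepsilon R(\varepsilon,s)}},
\end{equation*}
with $s_-(\varepsilon)\to -1$. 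The main obstacle, and the only delicate point in the whole argument, is passing to the limit through the integrable endpoint singularities at $s=\pm1$; the decisive observation is that $c_\alpha-V(u_0+\varepsilon s)$ vanishes exactly at $s=1$ and $s=s_-(\varepsilon)$, which prevents the perturbation $R$ from destroying the integrable $(1-s^2)^{-1/2}$ behavior at the endpoints. After factoring these endpoint zeros and applying the dominated convergence theorem, one obtains
\begin{equation*}
\tau(\alpha)\;\longrightarrow\;2\sqrt{2}\int_{-1}^{1}\frac{ds}{\sqrt{2(n-2)(1-s^2)}}\;=\;\frac{2\pi}{\sqrt{n-2}}\;=\;T_0,
\end{equation*}
which completes the proof.
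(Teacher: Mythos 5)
Your proof is correct, but it takes a genuinely different route from the paper's, most noticeably for limit (ii). The paper works entirely at the level of the ODE flow: for (i) it argues by contradiction, showing that if $\tau(\alpha_k)$ stayed bounded with $\alpha_k\nearrow 1$, the Hamiltonian conservation law would force $\gamma_{\alpha_k}(\tau(\alpha_k)/2)\to(0,0)$ while continuous dependence on initial data forces $\gamma_{\alpha_k}(\tau(\alpha_k)/2)\to\gamma_1(T_\infty/2)\neq(0,0)$; for (ii) it performs the blow-up $(\tilde u,\tilde v)=\left(\tfrac{u-u_0}{\alpha-u_0},\tfrac{v}{\alpha-u_0}\right)$, shows the rescaled vector fields $\tilde X_\alpha$ converge to the linearization $\tilde X_{u_0}$, proves a priori boundedness $\tau(\alpha)\le 5T_0/2$, and then extracts the limit $T_0$ by continuous dependence. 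You instead work directly with the classical period integral
\begin{equation*}
\tau(\alpha)=2\sqrt{2}\int_{u_-(\alpha)}^{\alpha}\frac{du}{\sqrt{c_\alpha-V(u)}},
\end{equation*}
handle continuity via the transversal first-return map (which actually gives the stronger $\tau\in C^\infty$, something the paper only asserts ``by a similar argument''), obtain (i) either from saddle-shadowing or from the non-integrable $1/u$ endpoint singularity as $c_\alpha\to 0$, and obtain (ii) by the substitution $u=u_0+\varepsilon s$ and dominated convergence. Your computations $V'(u_0)=0$, $V''(u_0)=4(n-2)$, and the resulting value $2\sqrt{2}\int_{-1}^1 ds/\sqrt{2(n-2)(1-s^2)}=2\pi/\sqrt{n-2}$ are all correct. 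The one place you should spell out more is the passage to the limit in (ii): as you note, $1-s^2$ and the error $\varepsilon R$ both vanish at $s=1$, but $R$ does \emph{not} vanish at $s=-1$ (since $1-s^3=2$ there), so the true zero is $s_-(\varepsilon)\neq -1$; the cleanest fix is to factor $c_\alpha-V(u_0+\varepsilon s)=(1-s)(s-s_-(\varepsilon))\,G_\varepsilon(s)$ with $G_\varepsilon$ continuous, uniformly bounded above and below, and $G_\varepsilon\to 2(n-2)$ locally uniformly, after which dominated convergence (with the dominating function $C/\sqrt{(1-s)(s+1-\delta)}$) applies. Both approaches are valid; the period-integral route is more explicit and matches the technique used for the monotonicity result in Appendix B, whereas the paper's rescaling argument avoids any quadrature formula and relies only on qualitative ODE theory.
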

\begin{proof}
(i) Suppose that there is a sequence $\alpha_{k} \nearrow 1$ so that $\tau(\alpha_{k}) \leq C$ for some constant $C$. By possibly extracting a subsequence, we may assume that $\lim_{k\to\infty}\tau(\alpha_{k}) = T_{\infty} < \infty$. Now, consider the points $\gamma_{\alpha_{k}}(T_{\infty}/2)$. By making use of the equation $H(\alpha_{k},0) = H(u_{\alpha_{k}}(\tau(\alpha_{k})/2),0)$, we now claim that $u_{\alpha_{k}}(\tau(\alpha_{k})/2) \to 0$ as $k \to \infty$. Indeed,
$u_{\alpha_{k}}(\tau(\alpha_{k})/2) \in [0,u_{0}]$, so we may assume that it converges to some value $u_{\infty}$ by further extracting a subsequence. Thus, taking the limit as $k\to\infty$ of $H(u_{\alpha_{k}}(\tau(\alpha_{k})/2),0) = H(\alpha_{k},0)$, we get that 
\begin{equation}
u_{\infty}^{2} ( u^{N-2}_{\infty} - 1) = 0.
\end{equation}
However, because $u_{\infty} \leq u_{0}$, the second term must be negative, so $u_{\infty} = 0$. Thus, we see that $\gamma_{\alpha_{k}}(T_{\infty}/2)$ must  converge to $(0,0)$. 
On the other hand, by continuous dependence of solutions to ODEs on their initial data, $\gamma_{\alpha_{k}}(T_{\infty}/2)$ must converge to $\gamma_{1}(T_{\infty}/2)$ which cannot be $(0,0)$. This is a contradiction.

(ii)
We will show this by proving that as $\alpha \searrow u_{0}$, if we rescale the solutions, then they converge to a solution of the linearized ODE around $(u_{0},0)$. We shift $u_{0}$ to the origin and blow up by defining $(\tilde u,\tilde v) = \left(\frac{u-u_{0}}{\alpha-u_{0}},\frac{v}{\alpha-u_{0}}\right)$. Thus the ODE becomes
\begin{equation}
\frac{d}{dt} (\tilde u,\tilde v) = \tilde X_{\alpha} : = \left( \tilde v , 
\frac{n(n-2)}{4} \frac{(\alpha-u_{0})\tilde u + u_{0}}{\alpha-u_{0}}  \left( u_{0}^{N-2} - ((\alpha-u_{0}) \tilde u+u_{0})^{N-2}\right)\right).
\end{equation}
Notice that under the rescaling, the trajectory $\tilde \gamma_{\alpha}$ encircles the origin and contains the point $(1,0)$. Moreover, as $\alpha \searrow u_{0}$ the vector field $\tilde X_{\alpha}$ converges to
\begin{equation}
\tilde X_{u_{0}} =  \left( \tilde v ,  - \frac{n(n-2) (N-2)}{4}u_{0}^{N-2} \tilde u \right) =  \left( \tilde v , (2-n) \tilde u \right).
\end{equation}
Thus, the solution to the linearized equation is $\tilde\gamma_{u_{0}}:= (\cos((n-2)^{1/2} t), -(n-2)^{1/2} \sin((n-2)^{1/2}t)$, 
which is periodic with period given by 
$T_{0}:= (n-2)^{-1/2}2\pi$.

Now, we claim first that the $\tau(\alpha)$ are bounded as $\alpha \to u_{0}$, say $\tau(\alpha) \leq 5T_{0}/2$. Suppose not, so there are $k\to\infty$ and $\alpha_{k}\searrow u_{0}$ so that $\tau(\alpha_{k}) > 5T_{0}/2$. Now, on one hand, by continuous dependence on initial data and due to the explicit formula of $\tilde\gamma_{u_{0}}$ we have that for any fixed $t \in (T_{0}/2,T_{0})$ for $k$ large enough the trajectory $\tilde \gamma_{\alpha_{k}}(t)$ has $\tilde v_{\alpha_{k}}(t) \geq \epsilon >0$. On the other hand (by Claim 19), because we have assumed that $T_{0} < 2\tau\left(\alpha_{k}\right) /5 < \tau\left(\alpha_{k}\right)/2$, $\tilde \gamma_{\alpha_{k}}(t)$ must always have $\tilde v_{\alpha_{k}}(t) < 0$, because $\tau(\alpha_{k})/2$ is the first (positive) time when $\tilde \gamma_{\alpha_{k}}$ crosses the $\tilde{u}$-axis. This is a contradiction.

That being said, because $\tau(\alpha)$ is bounded for $\alpha$ close to $u_{0}$, for any $\alpha_{k}\searrow \alpha$, we may assume that $\tau(\alpha_{k})\to \overline T$ for some $\overline T$. By continuous dependence of ODEs on their parameters, thus $\lim_{k\to\infty}\tilde \gamma_{\alpha_{k}}(\tau(\alpha_{k})/2) = \tilde \gamma_{u_{0}}(\overline T/2)$. Because $\tilde \gamma_{\alpha_{k}}(\tau(\alpha_{k})/2)$ all have $\tilde u \leq 0$ and lie on the $\tilde v = 0$ axis, we thus see that $\lim_{k\to\infty} \tilde \gamma_{\alpha_{k}}(\tau(\alpha_{k})/2) = (-1,0)$ and, at the same time, necessarily $\overline{T}=(2q+1)T_{0}$ for some integer $q\in\mathbb{N}$. But if $q>0$ then it were $\overline{T}\geq 3T_{0}$, contradicting our previous argument which showed that instead $\overline{T}\leq 5T_{0}/2$. Hence $q=0$ so that $\overline{T}=T_{0}$ and this completes the proof of (ii).

Continuity of $\tau(\alpha)$ follows by a similar argument as the one used in (i). 
\end{proof}

\subsubsection{Checking $AS_{p}$ for $p\geq 4$}

Proposition \ref{prop:example} follows from the following 
result.
\begin{prop}
The product metric $g_{\infty}$ on $\SS^1\left(\frac{T_0}{2\pi}\right)\times \SS^{n-1}(1)$ is a degenerate critical point of the Yamabe functional. When $n>2$ it is non-integrable, a global minimum of the Yamabe energy, and satisfies $AS_{p}$ for some even $p\ge 4$.
\end{prop}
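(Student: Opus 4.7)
First, I would identify $\Lambda_0 = \ker \cL_\infty$ for $\cL_\infty = (n-1)\Delta_{g_\infty} + R_{g_\infty}$. Since $R_{g_\infty} = (n-1)(n-2)$, $\Lambda_0$ is the $(n-2)$-eigenspace of $\Delta_{g_\infty}$. Separating variables on $\SS^1(T_0/2\pi) \times \SS^{n-1}(1)$, the spectrum of $\Delta_{g_\infty}$ is $\{k^2(n-2) + \mu_j : k \geq 0,\, j \geq 0\}$ where $0 = \mu_0 < \mu_1 = n-1 < \cdots$ are the eigenvalues of $\Delta_{\SS^{n-1}(1)}$. The value $n-2$ is produced only by $(k,j) = (1,0)$, giving
\[
\Lambda_0 = \Span\{\sin(\sqrt{n-2}\,t),\ \cos(\sqrt{n-2}\,t)\},
\]
a two-dimensional space, so $g_\infty$ is degenerate.

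Next I would argue that $g_\infty$ is the unique unit-volume CSC metric in $[g_\infty]_1$, which will yield both non-integrability and global minimality. By Proposition \ref{prop:schoen-analysis} any other such metric (and note that the $\SS^1$-rotation action acts trivially on the constant conformal factor $u_0$) comes from an $\alpha \in (u_0,1)$ with $\tau(\alpha) = T_0/k$ for some positive integer $k$. The technical lemma in Appendix \ref{app:mon-period} strengthens Lemma \ref{LimitAtInftyLemma} to the strict inequality $\tau(\alpha) > T_0$ for all $\alpha \in (u_0, 1)$, ruling out all such $(\alpha, k)$. Since $[g_\infty]$ is not the conformal class of the round $\SS^n$ for $n \geq 3$, a Yamabe minimizer exists in $[g_\infty]_1$, and being a CSC metric of unit volume it must equal $g_\infty$. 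Uniqueness of the minimizer then makes $g_\infty$ a strict global minimum of $\cY$, whence $F(v) = \cY(\Psi(v))$ has a strict minimum at $v = 0$ and is in particular non-constant; by Lemma \ref{lemm:int-imp-F-zero} this gives non-integrability.

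Finally I would verify $AS_p$ with $p \geq 4$ and $p$ even. By non-integrability, the analytic expansion of $F$ takes the form $F(v) = F(0) + F_p(v) + O(|v|^{p+1})$ with $F_p \not\equiv 0$ on $\Lambda_0$. Formula \eqref{eq:F3-comp},
\[
F_3(v) = -2(N-1)(N-2)\,R_{g_\infty} \int_M v^3\, dV_{g_\infty},
\]
vanishes identically on $\Lambda_0$, because for $v = a\sin(\sqrt{n-2}\,t) + b\cos(\sqrt{n-2}\,t)$ the cube $v^3$ is a linear combination of $\sin$ and $\cos$ at the frequencies $\sqrt{n-2}$ and $3\sqrt{n-2}$, each with vanishing mean on $\SS^1(T_0/2\pi)$. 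Hence $p \geq 4$. Global minimality forces $F_p \geq 0$ on $\Lambda_0$ via a Taylor argument combined with the $p$-homogeneity of $F_p$; if $p$ were odd then $F_p(-v) = -F_p(v)$ together with $F_p \geq 0$ would force $F_p \equiv 0$, contradicting non-integrability, so $p$ is even. Since $F_p$ is non-negative and non-zero on the unit sphere in $\Lambda_0$, it attains a positive maximum there, establishing $AS_p$.

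The main obstacle is the Appendix \ref{app:mon-period} input: proving $\tau(\alpha) > T_0$ strictly on $(u_0, 1)$ goes well beyond the mere limit statement of Lemma \ref{LimitAtInftyLemma}, and will require a careful second-order expansion of the period function at $u_0$ combined with a global monotonicity argument exploiting the Hamiltonian structure \eqref{HamEq} of the ODE system \eqref{eq:YP-system}.
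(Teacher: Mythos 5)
Your proposal is correct and follows essentially the same path as the paper's proof: degeneracy from the spectral computation on the product, non-integrability and global minimality from the strict monotonicity of the period function (Appendix \ref{app:mon-period}) combined with the limit $\tau(\alpha)\searrow T_0$ of Lemma \ref{LimitAtInftyLemma}, and $AS_p$ with even $p\geq 4$ from non-negativity of $F_p$ (forced by minimality) together with the $p$-homogeneity/parity argument (the paper derives $p\ge4$ from parity alone and remarks on the direct $F_3$ check; you instead verify $F_3\equiv0$ on $\Lambda_0$ directly, which is an equivalent route). One small remark: you correctly identify $\Lambda_0$ as two-dimensional, spanned by $\sin(\sqrt{n-2}\,t)$ and $\cos(\sqrt{n-2}\,t)$, whereas the paper's text says ``one dimensional''---this appears to be a slip in the paper, but it is immaterial since the argument only uses $\Lambda_0\neq 0$ and $F_p\not\equiv0$ on $\Lambda_0$.
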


\def\bpf{\begin{proof}}
\def\epf{\end{proof}}
\def\la{\lambda}
\def\vp{\varphi}

\begin{proof}
We start by proving degeneracy. Note that $R_{g_{\infty}}=R_{g_{\SS^{n-1}(1)}}=(n-2)(n-1)$ so it suffices to show that $\la_1(g_{\infty})=n-2$.
The eigenvalues of $g_{\infty}$ are the sums of those of each of its factors. Therefore, $\la_1(g_{\infty})=\min\{\la_1(\mathbb{S}^{n-1}(1)), \la_1(\mathbb{S}^1(T_0/2\pi)\}= \min\{n-1,(T_0/2\pi)^{-2}\}=n-2$.

%

Monotonicity of the period function follows from the general result 
\cite[Lemma 1.2]{BidautVeronBouhar} or \cite{CWeissler}. We review
the proof in our special setting in Appendix B as it seems not
to be well-known to experts.
Non-integrability is now immediate since $\vp_1(t):=\sin(\sqrt{n-2}t)$ is an eigenfunction of $\cL_\infty$ while Proposition \ref{prop:schoen-analysis} and the fact that $\tau(\alpha)$ is strictly increasing imply that $g_{\infty}$ is the only critical point of the Yamabe energy: because $\tau(\alpha)$ is strictly increasing and $\lim_{\alpha\searrow u_{0}}\tau(\alpha) = T_{0}$, there cannot be $\alpha \in (u_{0},1)$ and integers $k\geq 1$ so that $\tau(\alpha) = \frac{T_{0}}{k}$. Thus, $\Lambda_{0}$ is one dimensional, but $1$ is the \emph{unique} critical point of $\cY$ in $[g_{\infty}]_{1}$, so $g_{\infty}$ must be non-integrable. Notice that because $1$ is the unique critical point of $\cY$ in $[g_{\infty}]_{1}$, the solution of the Yamabe problem guarantees that it is the global minimum of the Yamabe energy on $[g_{\infty}]_{1}$.

Now, because $g$ is a non-integrable critical point, the function $F(v)$ defined on $\Lambda_{0}$ in Proposition \ref{prop:LSred} is necessarily non-constant. Furthermore, because $g$ is a unique global minimum for the Yamabe problem in its conformal class, we see that $\cY(1) < \cY(w)$ for any $w^{N-2}g_{\infty} \in [g_{\infty}]_{1}$ with $w\not\equiv 1$. In particular, this yields that if $v\not =0$ then necessarily $F(0) < F(v)$. Thus, denoting by $p$ the order of integrability of $g$, it is clear that $F_{p}$ must be everywhere non-negative (if it were not, we could take $v$ small enough so that Taylor's theorem would imply that $F(v) < F(0)$, contradicting the previous argument). From this, it is clear that $p \geq 3$ and in fact has to be even. (We remark that one can directly check $p\neq 3$ because $D^{3}F(0)[\varphi_{1},\varphi_{1},\varphi_{1}] = 0$ by using the explicit form of $F_{3}(v)$ given in \eqref{eq:F3-comp}.)
\end{proof}

\appendix

\section{Computing $F_{3}$}\label{sec:app}

In this appendix we compute the term $F_{3}$ (see Proposition \ref{prop:LSred} and the subsequent discussion) at a metric $g_{\infty}$ with constant scalar curvature. We believe this computation is of certain interest since, as the reader may check from the sequel, the higher order polynomials $F_{p}$ for $p\geq 4$ cannot be determined explicitly since this would require stronger information on the reduction map $\Phi$ (or, equivalently, on $\Psi$) at the linearization point than we actually have according to Proposition \ref{prop:LSred}. 

Denote by $\left\langle \,\cdot\,,\,\cdot\,\right\rangle$ the $L^{2}(M,g_\infty)$-pairing and, without further discussion we refer to Section \ref{sec:def-prelim} for the notation concerning differentials and gradients. First, we will show that $F_{1}(v) = F_{2}(v) = 0$. To check this, notice that 
$DF(w)[v] = 
D\mathcal{Y}(\Psi(w))\left[D\Psi(w)[v]\right].$
Thus, $DF(0) = 0$ as $D\mathcal{Y}(1) = 0$ as $1$ is a critical point of the Yamabe functional (by assumption, $g_{\infty}\in \mathcal{CSC}_{1}$) and of course $\Psi(0)=1$. Therefore, $F_{1} = 0$. Similarly, 
$D^{2}F(w)[v,u] = 
D^{2}{\mathcal{Y}}(\Psi(w)) 
\left[D\Psi(w)[u] , D\Psi(w)[v]\right]  + 
\left\langle D{\mathcal{Y}}(\Psi(w)) , D^{2}\Psi(w)[v,u]  \right\rangle.
$
When setting $w=0$, $\Psi(0)=1, D\Psi(0)=\hbox{\rm Id}$, and
\begin{align*}
D^{2}F(0)[v,u] & = 
D^{2}{\mathcal{Y}}(1)
[u,v]  + 
\left\langle D{\mathcal{Y}}(1) , D^{2}\Psi(0)[v,u]  \right\rangle\\
& =
-2(N-2)
\langle \cL_{\infty} u,v\rangle  + 
\left\langle D{\mathcal{Y}}(1) , D^{2}\Psi(0)[v,u]  \right\rangle.
\end{align*}
As before, the second term vanishes. The first term vanishes because $v$ is in the kernel of the linearization of $\cL_{\infty}$, by assumption.

As observed in \cite[Remark 1.19]{AdamsSimon}, one may explicitly compute $F_{3}$, without explicit knowledge of $\Psi$ (and this is what typically makes $AS_3$ simpler to check than $AS_p$ with $p>3$ in explicit examples). We will use this observation and check that to compute $D^{3}F(0)$, one may in fact compute $D^{3}\tilde F(0)$ where $\tilde F:\Lambda_{0}\to \RR$ is defined by $\tilde F(v) = \mathcal{Y}(1+v)$. We first compute $D^{3}F$:
\begin{equation*}\begin{split}
D^{3}F(w)[v,u,z] & = 
D^{3}{\mathcal Y}(\Psi(w))
[D\Psi(w)[v],D\Psi(w)[u],D\Psi(w)[z]] \\
 & \qquad 
 + 
D^{2}{\mathcal Y}(\Psi(w))
[D^{2}\Psi(w)[u,z],D\Psi(w)[v]]
\\
& \qquad 
+D^{2}{\mathcal Y}(\Psi(w))
[D\Psi(w)[u],D^{2}\Psi(w)[v,z]]
\\
& \qquad 
+ 
D^{2}{\mathcal Y}(\Psi(w))
[D\Psi(w)[z],D^{2}\Psi(w)[v,u]]
\\
& \qquad 
+ \left\langle D{\mathcal Y}(\Psi(w)), 
D^{3}\Psi(w)[v,u,z]\right\rangle.
\end{split} \end{equation*}
Setting $w=0$, and using similar considerations as before (in particular noting that $D^{2}{\mathcal Y}(1)[\cdot]$ is self-adjoint),
we obtain
$D^{3}F(0)[v,u,z] = D^{3}{\mathcal Y} (1)[v,u,z].
$ 
Performing the same computation for $D^{3}\tilde F(0)$ yields the same result. Next, we compute $D^{3}\tilde F(0)$. Recall 
from Section 2 that the differential of the Yamabe energy
 is
$\frac 1 2 D\cY(w)[v] =  
\int_{M}\left[ - (N+2) \Delta_{g_{\infty}} w + R_{g_{\infty}} w  - r_{w^{N-2}g_{\infty}} w^{N-1} \right] v dV_{g_{\infty}}.  
$ 
The first two terms are linear in $w$, so when computing the third derivative of $\cY$ at $1$, they will vanish. Let us then concentrate on the third term.
Because $r_{w^{N-2}g_{\infty}}=\cY(w)$ we have already shown that the first and second directional derivatives of this expression
in directions in $\Lambda_0$ vanish at $w=1$.
Hence, we see that the following expression holds:
\begin{equation*}
D^3\cY(1)[u,z,v] =  
- 2(N-1)(N-2) r_{g_{\infty}}
\int_{M} uzv dV_{g_\infty},
\end{equation*}
for $u,z,v\in \Lambda_{0}$, proving \eqref{eq:F3-comp}.

In this final paragraph (contrary to the rest of this section) 
we will use the space-time $C^{k,\alpha}$ norms on an interval $(t,t+1)\times M$, as in Section \ref{sec:slow-flow}. One may observe that by repeating the argument used above for $w$ such that 
$\Vert w-1\Vert_{C^{2,\alpha}}<1$ it is clear that the $C^{0,\alpha}$ norm 
of
$D^3\cY(w)[v,u]$,
regarded (via the $L^{2}(M,g_{\infty})$ pairing) as a function on $M$,
can be
bounded by a uniform constant times the 
$C^{2,\alpha}$ norm of $u$ times that of $v$. More precisely,
\begin{equation}\label{eq:app-D3bds}
\Vert D^3\cY(w)[u,v] \Vert_{C^{0,\alpha}}
\leq
C 
\Vert u\Vert_{C^{2,\alpha}}\Vert v\Vert_{C^{2,\alpha}}
\end{equation}
for some uniform $C>0$.
Furthermore, for $w_1,w_2$ such that $\Vert w_i-1\Vert_{C^{2,\alpha}}<1$ (for $i=1,2$), we have
\begin{equation*}
\Vert D^3\cY(w_1)[v,v]-
D^3\cY(w_2)[u,u] \Vert_{C^{0,\alpha}}
\leq
C (\Vert w_1\Vert_{C^{2,\alpha}}+\Vert w_2\Vert _{C^{2,\alpha}})(\Vert u\Vert_{C^{2,\alpha}} + \Vert v\Vert_{C^{2,\alpha}})
\Vert u-v\Vert_{C^{2,\alpha}}
\end{equation*}
for some uniform $C>0$.
These facts are used in the proof of Lemma \ref{lemm:DYu-est} and Proposition \ref{prop:slow-decay-rewrite-two-comp}.

\section{Monotonicity of the period function}\label{app:mon-period}

Here we review the proof of \cite[Lemma 1.2]{BidautVeronBouhar} in
our special setting.
Recalling \eqref{HamEq}, define the ``potential energy" 
$U(u)=H/2-v^2=
\big(\frac2{N-2}\big)^2(u^N-u^2)$. Its absolute minimum in the range $(0,1)$ is
attained at $u=u_0$.
\def\al{\alpha}\def\be{\beta}\def\la{\lambda}\def\del{\partial}
\def\ra{\rightarrow}
Denote by $\lambda_\be(t)=(u(t),v(t))$ the solution of \eqref{eq:YP-system}
with $\la_\be(0)=(u_0,\beta)\in\Omega$, with
$\be\in[0,\sqrt{-U(u_0)})$ (with $\la_0=(u_0,0)$). This solution intersects the $u$-axis at
exactly two points that we denote by $(z_-(\be),0)$ and $(z_+(\be),0)$
with $z_-(\be)<z_+(\be)$. Since $v=du/dt$, $dt=du/v=du/\sqrt{H/2-U}$,
so the half-period $\tau(\be)/2$ of $\lambda_\be(t)$ is given 
by $\int_{u_0}^{z_+(\be)}du/\sqrt{H(\be)/2-U(u)}-\int_{u_0}^{z_-(\be)}du/\sqrt{H(\be)/2-U(u)}$,
where $H(\be)=2\be^2+2U(u_0)$.
Note that $U(z_\pm(\be))=H(\be)/2=\be^2+U(u_0)$, so differentiation in $\be$
gives $\frac{\del U}{\del u}(z_\pm(\be))z_\pm'(\be)=2\be$. Thus,
setting $a=\sqrt{U(u)-U(u_0)}/\be$, gives
$\tau(\be)/2=\int_{0}^1(z_+-z_-)'(\be t)\frac{dt}{\sqrt{1-t^2}}.$
The advantage of this formula is its simple dependence on $\be$: it
suffices to show now that $z_+-z_-$ is convex
in $\be\in(0,\sqrt{-U(u_0)})$ (note $-U(u_0)=\frac1{N-2}\left(\frac2N\right)^{\frac N{N-2}}$).
Geometrically, this means that the ``width" of the domains enclosed by the image
of $\la_\be$ is convex as a function of their ``height" $2\be$.
Differentiation in $\be$ yields
$
z_{\pm}''(\be)/2=\frac{U'^2-2U''(U-U(u_0))}{U'^3}(z_\pm(\be)).
$
L'H\^opital's rule applied twice immediately gives (using that
$\lim_{\be\ra0}z_\pm(\be)=u_0$ and $U'(u_0)=0$) 
$\lim_{\be\ra0}z''_\pm(\be)/2=-U'''(u_0)/3U''^2(u_0)=:A<0$.
The convexity claim follows if $z''_+(\be)/2\ge A\ge z''_-(\be)/2$,
for $\be\in(0,\sqrt{-U(u_0)})$. Since the sign of $U'(z_\pm)$ (which
is the sign of the denominator of $z_\pm''$)
is $\pm$, both inequalities follow if
$F(u):=
U'^2-2U''(U-U(u_0))-AU'^3\ge0
$
on $u\in(0,1)$. 
Now,
$U''(u)=
\big(\frac2{N-2}\big)^2(N(N-1)u^{N-2}-2)$
is negative on $\big(0,\big(\frac2{N(N-1)}\big)^{\frac1{N-2}}\big)\subset(0,1)$;
so, as $U'''>0$, $F'=-2U'''(U-U(u_0))-3AU'^2U''\le0$, i.e.,
$F$ does not increase in that range. Thus, is suffices to show
that $F\ge0$ in $\big(\big(\frac2{N(N-1)}\big)^{\frac1{N-2}},1\big)$. In that
regime (where $U''>0$), consider the function $H:=F/U''$, and compute
$H'=\frac{U'^2U'''}{U''^2}[A(U'-3U''^2/U''')-1]$. Denote the expression in the brackets
by $K$ and note the sign of $H$ equals the sign of $K$. Now
$K'=A\Big(U''-\frac{6U''U'''^2-3U''''U''^2}{U'''^2}\Big),$
or
$K'=\frac{AU''}{U'''^2}\Big(-5U'''^2+3U''''U''\Big),$
whose sign is opposite the sign of the expression in the paranthesis,
that we denote by $L$. But $L=-\frac92(U'')^{\frac83}\big((U'')^{-\frac23}\big)''$,
and $(U'')^{-\frac23}$ is seen to be convex on $\big(\big(\frac2{N(N-1)}\big)^{\frac1{N-2}},1\big)$; 
thus $K'\ge0$ there (as $U''>0$ there).
Now, $K$ vanishes at $u_0$, so $K\ge0$ and $H'\ge0$ on $(u_0,1)$. But 
$F(u_0)=H(u_0)=0$, so $H\ge0$ and $F\ge0$ on $(u_0,1)$. Further, $K$ must be
negative on $\big(\big(\frac2{N(N-1)}\big)^{\frac1{N-2}},u_0\big)$ (as $K'\ge0$ on
$(\frac2{N(N-1)}\big)^{\frac1{N-2}},1)$ while $K(u_0)=0$).
Thus $H'\le0$ on $\big(\big(\frac2{N(N-1)}\big)^{\frac1{N-2}},u_0\big)$, so $H$
is nonincreasing there; but $H(u_0)=0$, so we must have $H\ge0$
also on $\big(\big(\frac2{N(N-1)}\big)^{\frac1{N-2}},u_0\big)$. In conclusion,
$F\ge0$ on $(0,1)$, as desired.



\providecommand{\bysame}{\leavevmode\hbox to3em{\hrulefill}\thinspace}
\providecommand{\MR}{\relax\ifhmode\unskip\space\fi MR }
\providecommand{\MRhref}[2]{%
  \href{http://www.ams.org/mathscinet-getitem?mr=#1}{#2}
}
\providecommand{\href}[2]{#2}

\end{document}